\newtheorem{definition}{Definition}[section]
\newtheorem{theorem}{Theorem}[section]
\newtheorem{lemma}{Lemma}[section]
\newtheorem{remark}{Remark}[section]
\theoremstyle{definition}
\newtheorem{example}{Example}[section]}
\newtheorem{proposition}{Proposition}[section]
\newcommand{\argmin}{\arg\!\min}
\newcommand{\eqnum}{\refstepcounter{equation}\textup{\tagform@{\theequation}}}
\newcommandx{\info}[2][1=]{\todo[linecolor=OliveGreen,backgroundcolor=OliveGreen!25,bordercolor=OliveGreen,#1]{#2}}
\providecommand{\keywords}[1]{\textbf{\textit{Key words---}} #1}
\algnewcommand\algorithmicinput{\textbf{Input:}}
\algnewcommand\INPUT{\item[\algorithmicinput]}
\def\namedlabel#1#2{\begingroup
    #2%
    \def\@currentlabel{#2}%
    \phantomsection\label{#1}\endgroup
}
\DeclareMathOperator{\spn}{span}
\DeclareMathOperator{\dom}{dom}
\DeclareMathOperator{\prox}{Prox}
\begin{document}
\title{A Hybrid Finite-Dimensional RHC for Stabilization of Time-Varying Parabolic Equations }
\author{BEHZAD AZMI and KARL KUNISCH}
\date{\today}
\maketitle
\begin{abstract}
The present work is concerned with the stabilization of a general class of time-varying linear parabolic equations by means of a finite-dimensional receding horizon control (RHC).   The stability and suboptimality of the unconstrained receding horizon framework is studied. The analysis allows the choice  of the squared $\ell_1$-norm as control cost. This leads to a nonsmooth  infinite-horizon problem which provides stabilizing optimal controls with a low number of active actuators over time.  Numerical experiments are given which validate the theoretical results and illustrate the qualitative differences between the $\ell_1$- and  $\ell_2$-control costs.
 \end{abstract}

\keywords{receding horizon control, asymptotic stability,
observability, optimal control, infinite-dimensional systems,  sparse controls}

\section{Introduction}
In this work we are concerned with the stabilization of the  controlled system governed by the parabolic equation
\begin{equation}
\label{e1}
\begin{cases}
\partial_t y-\nu\Delta y(t) + a(t)y(t)+ \nabla \cdot (b(t)y(t)) = \sum^N_{i =1} u_i(t)\Phi_i   &\text{ in } (0,\infty)\times\Omega,\\
y =0   & \text{ on } (0,\infty)\times \partial \Omega,\\
y(0)=y_0 &\text{ on } \Omega,
\end{cases}
\end{equation}
with a time depending control vector $\mathbf{u}(t):=[ u_1(t), \dots, u_N(t) ]^t \in L^2(0,\infty;\mathbb{R}^N)$, where $\Omega \subset \mathbb{R}^n$ is a bounded domain with the smooth boundary $\partial \Omega$ and $\nu>0$.  The  functions  $\Phi_i=\Phi_i(x)$ for $i =1, \dots, N$ describe the actuators. The support of these actuators are contained in  an  open subset of  $\Omega$. The reaction  term $a(t)=a(t,x)$ and the  convection term  $b(t)=b(t,x)$ are, respectively, real-  and  $\mathbb{R}^n$-valued functions of $(t,x)\in (0,\infty) \times \Omega $. Although stabilization of the time-varying system of the form \eqref{e1} is of interest on its own,  as a main motivation, we can mention stabilization of  nonlinear controlled systems around the time depending trajectories, see e.g.,  \cite{MR3691212,MR3337988,Phan2018}.  In this case,  the controlled systems of the form \eqref{e1} appears after the linearization of nonlinear systems around a reference trajectory.

Stabilization of the infinite-dimensional controlled systems  by means of finite dimensional controllers have been studied by many authors, see e.g.,  \cite{MR2784695, MR2005128,  MR2974739, MR2104285, MR3691212,MR3337988, Phan2018,MR2629581} and the reference therein.  In all of these
contributions the stabilizing control were given by a feedback control law. In the present work, we construct the stabilizing control within a receding horizon framework. Thus the control objective is to construct a  Receding Horizon Control (RHC)  $\mathbf{u}_{rh}(y_0) \in  L^2(0,\infty;\mathbb{R}^N)$ such that the corresponding state satisfies
\begin{equation*}
\| y_{rh}(t) \|^2_{\mathcal{X}} \leq c_{\mathcal{X}} e^{-\zeta t} \| y_0\|^2_{\mathcal{X}}    \quad  \forall t>0,
\end{equation*}
where  the constants $c_{\mathcal{X}}$ and $\zeta>0$ are independent of $y_0\in \mathcal{X}$.  Here $\mathcal{X}$ will be chosen to be either $H^1_0(\Omega)$ or $L^2(\Omega)$. 

The RHC is constructed through the concatenation of a sequence of open-loop optimal controls on overlapping
temporal intervals covering $[0,\infty)$.  These open-loop subproblems involve a performance index which imposes a structure on the optimal controls.
Here for every $T \in (0, \infty]$ and $0\leq  t_0 \leq T$, we consider
\begin{equation}
\label{e49}
J^p_{T}(\mathbf{u};t_0,y_0):= \frac{1}{2}\int_{t_0}^{t_0+T}\|\nabla y(t)\|^2_{L^2(\Omega)}+\frac{\beta}{2}\int^{t_0+T}_{t_0} |\mathbf{u}(t)|^2_{*}dt
\end{equation}
where the norm $|\cdot |_{*}$ is chosen either as $\ell^2-$norm or $\ell^1-$norm on $\mathbb{R}^N$.  The choice of the $\ell^1-$norm defined by $|\mathbf{u}|_1 = \sum^N_{j=1} |u_j|$ leads to a nonsmooth convex performance index function and enhances sparsity in the coefficient of the control at any $t \in (t_0,t_0+T)$. For every $t>0$ the term $|\mathbf{u}(t)|_{1}$ can be also interpreted as a convex relaxation of  $|\mathbf{u}(t)|_{0}$, see e.g. \cite{MR2243152,MR1963681,MR2045813}. Moreover we can write
\begin{equation}
\label{e4}
\frac{\beta}{2}\int_{t_0}^{t_0+T} |\mathbf{u}(t)|^2_{1}dt = \frac{\beta}{2}\int_{t_0}^{t_0+T} |\mathbf{u}(t)|^2_{2}dt + \beta\int^{t_0+T}_{t_0} \sum^N_{ i,j=1\atop
i<j} |u_i(t)u_j(t)| \,dt.
\end{equation}
The last term in  \eqref{e4} is the $L^1$-penalization of the switching constraint $u_i(t)u_j(t)=0$ for $i\neq j$ and $t>0$. See e.g., \cite{MR3681006,MR3459600}.

Associated to $J^p_{T}$, we consider the following infinite horizon optimal control problem
\begin{equation}
\label{opinf}
\tag*{$OP^{p}_{\infty}(y_0)$}
\min\{ J^p_{\infty}(\mathbf{u};0,y_0) \mid (y,\mathbf{u}) \text{ satisfies } \eqref{e1}, \mathbf{u} \in \mathcal{U}\}.
\end{equation}
For the choice of $ | \cdot |_{*}=| \cdot |_2$, \ref{opinf} is a linear-quadratic problem and one can construct a optimal feedback law based on the corresponding differential Ricatti operator. But, in practice, for the infinite-dimensional controlled systems of the form \eqref{e1}, discretization gives rise to finite-dimensional differential Riccati equations of very large order defined on a relatively large temporal interval. Therefore one is  ultimately confronted with the curse of dimensionality. Further, the choice of $ | \cdot |_{*}=| \cdot |_1$ leads to a nonsmooth  infinite horizon problem.  Finite-horizon optimal control problems with nonsmooth structure have been well-studied for both finite- and infinite-dimensional controlled systems, see e.g., \cite{MR3376312,MR3032866,MR3601024,MR3491785,MR3681006,MR552566,MR2556849,MR2283487}. On the other hand, there is very little research dealing with infinite horizon nonsmmoth problems,  see e.g., \cite{MR3612174,MR3605155}. In \cite{MR3605155}  infinite horizon sparse optimal control problems governed by ordinary differential equations are investigated. In this work, the corresponding sparse optimal controller is approximated by a dynamic programming approach. But again, due to curse of dimensionality, this method is also not feasible for infinite-dimensional time-varying systems. An alternative approach for dealing with \ref{opinf} is the receding horizon framework which allows us to approximate  the solution of nonsmooth infinite horizon problems by a sequence of nonsmooth finite-horizon problems which are well-studied from the theoretical and numerical aspects. The main issue is then to justify the stability of RHC.  Depending on the structure of the underlying problem, this is usually done,
  by techniques involving the design of appropriate sequences of temporal intervals, using  an adequate concatenation scheme, or adding terminal costs and\textbackslash or constraints to the finite horizon subproblems. Due to the structure of the receding horizon framework, the resulting control has a feedback mechanism.

In the present work, we adapt the receding horizon framework proposed  for time-invariant system in \cite{AzmiKunisch} to  time-varying infinite-dimensional linear system.  In this framework, in order to guarantee the stability of RHC, neither terminal costs nor terminal constraints are needed. But rather, by generating an appropriate sequence of overlapping temporal intervals and applying a suitable concatenation scheme, the stability and a certain suboptimality of RHC are obtained.  Previously, this framework was studied for continuous-time finite-dimensional controlled systems in e.g, \cite{MR1833035,MR2950434}  and for discrete-time controlled systems in  e.g, \cite{MR2141559,MR2491596,MR2459581}.

In the RHC approach that we follow here, we choose a sampling time $\delta >0$ and an appropriate prediction horizon $T>\delta$. Then, we define sampling instances $t_k :=k\delta$ for $k=0\dots$.  At every sampling instance $t_k$, an open-loop optimal control problem is solved over a finite prediction horizon $[t_k,t_k+T]$. Then the optimal control is applied to steer the system from time $t_k$ with the initial state $y_{rh}(t_k)$ until time $t_{k+1}:=t_k+\delta$ at which point, a new measurement of state is assumed to be available. The process is repeated starting from the new measured state: we obtain a new optimal control and a new predicted state trajectory by shifting the prediction horizon forward in time. The sampling time $\delta$ is the time period between two sample instances. Throughout, we denote the receding horizon state- and control variables  by $y_{rh}(\cdot)$ and $\mathbf{u}_{rh}(\cdot)$, respectively. Also,  $(y_T^*(\cdot;t_0,y_0), \mathbf{u}^*_T(\cdot;t_0,y_0))$ stands for the optimal state and control of the optimal control problem with  finite time horizon $T$,  and initial function  $y_0$ at initial time $t_0$. This is summarized in  Algorithm \ref{RHA}.

\begin{algorithm}[htbp!]
\caption{Receding Horizon Algorithm}\label{RHA}
\begin{spacing}{1.1}
\begin{algorithmic}[1]
\Require Let the prediction horizon $T$, the sampling time $\delta<T$, and the initial point $y_0\in \mathcal{X}$ be given. Then we proceed through the following steps:
\State $k := 0,\quad t_0 := 0$, and $y_{rh}(t_0):=y_0$.
\State Find the solution $(y_T^*(\cdot;t_k,y_{rh}(t_k)) ,\mathbf{u}^*_T(\cdot;t_k,y_{rh}(t_k)))$ over the time horizon $[t_k,t_k +T]$ by solving the finite horizon open-loop problem
\begin{equation}
\label{e5}
\begin{split}
&\min_{\mathbf{u}\in L^2(t_k,t_k+T;\mathbb{R}^N)}J^p_T(\mathbf{u};t_k,y_{rh}(t_k)):= \int^{t_k+T}_{t_k}(\frac{1}{2}\|\nabla y(t)\|^2_{L^2(\Omega)}+\frac{\beta}{2} |\mathbf{u}(t)|^2_{*})dt  \\
\text{ s.t } &\begin{cases}
\partial_t y(t)-\nu\Delta y(t) + a(t)y(t)+ \nabla \cdot (b(t)y(t)) = \sum^N_{i =1} u_i(t)\Phi_i  &\text{ in } (t_k,t_k+T)\times\Omega,\\
y =0   & \text{ on } (t_k,t_k+T)\times \partial \Omega,\\
y(t_k)=y_{rh}(t_k) &\text{ on } \Omega,
\end{cases}
\end{split}
\end{equation}
\State Set \begin{align*}
\mathbf{u}_{rh}(\tau)&:=\mathbf{u}^*_T(\tau;t_k,y_{rh}(t_k)) \quad &\text{ for all } \tau \in [t_k,t_k+\delta),\\
y_{rh}(\tau)&:=y^*_T(\tau;t_k,y_{rh}(t_k)) \quad &\text{ for all } \tau \in [t_k,t_k+\delta],\\
t_{k+1} &:= t_k +\delta, & \\
k &:= k+1.&
\end{align*}
\State Go to step 2.
\end{algorithmic}
\end{spacing}
\end{algorithm}

In the light of our recent investigations on analysis of RHC for infinite-dimensional systems  in \cite{MR3843184,MR3721863,AzmiKunisch}, the novelty of the present paper lies in the following facts: 1. Here we deal with time-varying systems.  2. Particularly in comparision to our previous investigation in \cite{AzmiKunisch},  we study the stability of RHC for the $H_0^1(\Omega)$-tracking term in the performance index function. Based on an observability inequality, we will show the exponential stability of RHC which was not the case for $L^2(\Omega)$-tracking term in \cite{AzmiKunisch}.   Further, we will see that, for more regular data, the stabilization (with respect to $H^1_0$-norm ) of the strong solution holds with the same rate as for the weak solution.  3. Here our RHC consists of finite-dimensional time-dependent controllers. 4. By incorporating the  squared $\ell_1$-norm as the control cost, we demonstrate the sparse controls can also be treated in the RHC framework, both analytically and numerically.

The remainder of the paper is organized as follows: In Section 2, the stability and suboptimality of RHC is investigated for a general abstract time-varying linear controlled system for which system \eqref{e1} counts as a special case. Sections 3 reviews some facts  about well-posedness and regularity of the solution to \eqref{e1}.  Section 4 deals with well-posedness and first-order optimality conditions of  the open-loop subproblems. Further, in the 5-th section  selected results on stabilizability of \eqref{e1} by finitely many controllers are summarized. Then, the main results i.e., the asymptotic stability and suboptimality of \eqref{e1} according to the regularity of the solution and the choice of performance index function are given in Section 6. Section 7, contains the numerical experiments which validate the theoretical results in the previous sections and  illustrate the qualitative differences between the $\ell_1$- and  $\ell_2$-control costs.

\section{Stability of the receding horizon control}
This section is devoted to investigating the stability of RHC for nonautonomous systems in an abstract framework which contains the above discussion as a special case. Let $V \hookrightarrow  H=H' \hookrightarrow  V'$ be a Gelfand triple of real Hilbert spaces with $V$ densely  contained in $H$. Further let $U$  denote the control space which is assumed to be a real Hilbert space. For any $T \in \mathbb{R}_+ \cup \{ \infty \}$, $t_0\geq 0$, and $y_0 \in H$,  consider the time-varying linear system
\begin{equation}
\label{e6}
\tag*{$LTV(T,t_0,y_0)$}
\begin{cases}
\partial_ty(t)=A(t)y(t)+B(t)\mathbf{u}(t) &\quad \text{ for } t \in  ( t_0,t_0+T)\\
      y(t_0)=y_0, &
\end{cases}
\end{equation}
where $A(t) \in \mathcal{L}(V,V')$ and $B(t) \in \mathcal{L}(U,V')$ for almost every $t \in  ( t_0,t_0+T)$. Throughout the section, it is assumed that for any quadruple $(T,t_0,y_0,\mathbf{u}) \in \mathbb{R}^2_+\times H \times L^2(t_0,t_0+T;U)$ with a finite $T>0$, equation \ref{e6} admits a unique solution $y^{\mathbf{u}} \in W(t_0,t_0+T;V,V')$ satisfying
\begin{equation*}
y^{\mathbf{u}}(t)-y_0=\int_{t_0}^{t}(A(s)y^{\mathbf{u}}(s)+B(s)\mathbf{u}(s))ds\quad \text{ in } V'
\end{equation*}
for $t \in [t_0,t_0+T]$, where
\begin{equation}
\label{wnullt}
W(t_0,t_0+T;V,V'):= L^2(t_0,t_0+T;V)\cap H^1(t_0,t_0+T;V'),
\end{equation}
is endowed with the norm $\|v\|_{W(t_0,t_0+T;V,V')}: =  (\|\partial_t v\|^2_{L^2(t_0,t_0+T;V')}+ \| v\|^2_{L^2(t_0,t_0+T;V)})^{\frac{1}{2}}$.  We recall that $W(t_0,t_0 +T;V,V')$  is continuously embedded in $C([t_0,t_0 +T];H)$,  see e.g. \cite{wlokapartial,temam1997infinite}. Moreover, for every finite $T$ and  the solution $y^{\mathbf{u}}$ we shall require the estimate
\begin{equation}
\label{Est1}
\|y^\mathbf{u}\|^2_{C([t_0,t_0 +T];H)} \leq c_T \left(\|y_0\|^2_H +  \| \mathbf{u}\|^2_{L^2(t_0,t_0+T;U)}\right),
\end{equation}
where the constant $c_T$ is independent of $y_0$, $f$, and $\mathbf{u}$. Further, $c_T$ may increase exponentially as $T\to \infty$.

For the choice  $A(t)y =(-\nu\Delta+a(t))y + \nabla \cdot (b(t)y)$, $B(t):=\left[ \Phi_1, \dots, \Phi_N \right]$, and  $U:=\mathbb{R}^N$,  the controlled system \eqref{e1} is a special case of \ref{e6}.

To specify our optimal control problems, we introduce the incremental function $\ell: \mathbb{R}_+ \times V\times U \to \mathbb{R}_+$ satisfying
\begin{equation}
\label{estiob}
\ell(t,y,\mathbf{u})\geq \alpha_{\ell}( \|y\|^2_{H}+\|\mathbf{u}\|^2_{U}) \quad   \text{ for every }  t\geq 0  \text{  and every }  (y,\mathbf{u}) \in  V \times U,
 \end{equation}
where  $\alpha_{\ell}>0$ is independent of $(t,y,\mathbf{u})$, and $\ell(t,0,0) =0$ for every $t \in \mathbb{R}_+$.

For a given prediction horizon of length $T>0$, and initial state $y_0 \in H$ at time $t_0$, the receding horizon approach relies on the finite horizon optimal control problem of the form
\begin{equation}
\label{PT}
\tag*{$OP_{T}(t_0,y_0)$}
\min_{\mathbf{u}\in L^2(t_0,t_0+T;U)}J_T(\mathbf{u};t_0,y_0):= \int^{t_0+T}_{t_0} \ell(t,y(t),\mathbf{u}(t))dt
\text{  subject  to  \ref{e6}}.
\end{equation}
The solution to  \ref{PT} and its associated state will be denoted by $(y^*_T(t;t_0,y_0), \mathbf{u}^*_T(t,t_0,y_0))$.  The receding horizon technique will be used to solve the following infinite horizon problem
\begin{equation}
\label{opinf1}
\tag*{$OP_{\infty}(y_0)$}
 \min_{ \mathbf{u} \in L^2(0,\infty;U)}\{J_{\infty}(\mathbf{u};0,y_0) \text{ subject to $LVT(\infty,0,y_0)$}  \}.
\end{equation}
This technique can be expressed as in Algorithm \ref{RHA2}.
\begin{algorithm}[htbp!]
\caption{Receding Horizon Algorithm for abstract system}\label{RHA2}
\begin{spacing}{1.1}
\begin{algorithmic}[1]
\Require Let the prediction horizon $T$, the sampling time $\delta<T$, and the initial point $y_0\in H$ be given. 
\Comment{ We proceed through the steps of Algorithm \ref{RHA} except that Step 2  is replaced by:}

\mbox{2. Find $(y_T^*(\cdot;t_k,y_{rh}(t_k)) ,\mathbf{u}^*_T(\cdot;t_k,y_{rh}(t_k)))$ over $[t_k,t_k +T]$ by solving $OP_{T}(t_k,y_{rh}(t_k))$}.
\end{algorithmic}
\end{spacing}
\end{algorithm}
\begin{definition}
For any $y_0 \in H$  the infinite horizon value function $V_{\infty}: H \to \mathbb{R}_+$ is defined by
\begin{equation*}
V_{\infty}(y_0):= \min_{ \mathbf{u} \in L^2(0,\infty;U)}\{J_{\infty}(\mathbf{u};0,y_0) \text{ subject to $LVT(\infty,0,y_0)$}  \}.
\end{equation*}
Similarly, for every $(T,t_0,y_0) \in \mathbb{R}^2_+ \times H$,  the finite horizon value function $V_{T}: \mathbb{R}_+ \times H \to \mathbb{R}_+$ is defined by
\begin{equation*}
V_{T}(t_0,y_0):= \min_{\mathbf{u} \in L^2(t_0,t_0+T;U)}\{J_{T}(\mathbf{u};t_0,y_0 ) \text{ subject to \ref{e6}} \}.
\end{equation*}
\end{definition}
In order to show the exponential stability and suboptimality of the receding horizon control obtained by Algorithm \ref{RHA2},  we shall need to verify the following properties:
\begin{description}
\item[\namedlabel{P1}{P1}] For every $(T,t_0,y_0)\in  \mathbb{R}^2_+ \times H$,  every finite horizon optimal control problem of the form  \ref{PT}  admits a solution.
 \end{description}
 Moreover, we require the following properties for the finite horizon value function $V_T$:
\begin{description}
\item[\namedlabel{P2}{P2}] For every positive number $T$, $V_T$ is \textit{globally decrescent} with respect to the $H$-norm. That is, there exists a continuous,  non-decreasing,  and bounded function $\gamma_2: \mathbb{R}_+ \to \mathbb{R}_+$ such that
\begin{equation}
\label{e7}
 V_T(t_0,y_0)  \leq  \gamma_2(T)\|y_0\|^2_{H} \quad \text{ for every } (t_0,y_0)\in   \mathbb{R}_+ \times H.
\end{equation}
\item[\namedlabel{P3}{P3}] For every $T>0$,  $V_T$ is \textit{uniformly positive} with respect to the $H$-norm. In other words,  for every $T>0$ there exists a constant $\gamma_1(T)>0$ such that we have
\begin{equation}
\label{e8}
V_T(t_0,y_0)  \geq \gamma_1(T)\|y_0\|^2_{H}  \quad \text{ for every } (t_0,y_0)\in \mathbb{R}_+ \times H.
\end{equation}
\end{description}
\begin{remark}
The constant $\gamma_1(T)$ is related to the observability inequalities for the linear system \ref{e6}  with $\mathbf{u} = 0$. For infinite-dimentional  parabolic and hyperbolic systems, we refer to \cite{MR2383077} and for finite dimensional systems we mention the reference \cite{MR1844565}. Later, we will see that, for the control system \eqref{e1} with the performance index function \eqref{e49}, we have  $\gamma_1(T) \to 0$ monotonically as $T\to 0$.
\end{remark}

For the sake of simplicity, throughout this section,  we use the notation
\begin{equation*}
 \ell^*_T(t;t_0,y_0) := \ell(t,y^*_T(t;t_0,y_0),\mathbf{u}^*_T(t,t_0,y_0))    \quad \text{ for every } t \in (t_0,t_0+T).
\end{equation*}
The results  of this section are similar to the those in \cite{AzmiKunisch, MR3721863} with the difference that here the dynamical system is nonautonomous and, thus, the finite horizon value function depends also on initial time and the estimates are not translation invariant.  For the sake of completeness and  the convenience of the reader, we provide and adapt some the proofs here.
\begin{lemma}\label{lem2}
 If \ref{P1}-\ref{P2} hold and $T>\delta>0$ are given,  then for every  $(t_0,y_0) \in  \mathbb{R}_+ \times H$ the following inequalities hold:
\begin{equation}
\label{lem2c1}
\begin{split}
&V_T(t_0+\delta, y_T^*(t_0+\delta;t_0,y_0)) \\
 &\leq\int^{t_0+t^*}_{t_0+\delta} \ell^*_T(t;t_0,y_0)dt+\gamma_2(T+\delta-t^*)\| y_T^*(t_0+ t^*;t_0,y_0)\|^2_{H}\quad \text{for all } t^* \in [\delta, T],
\end{split}
\end{equation}
 and
\begin{equation}
\label{lem2c2}
\int^{t_0+T}_{t_0+t^*} \ell^*_T(t;t_0,y_0)dt \leq\gamma_2(T-t^*)\| y_T^*(t_0+t^*;t_0,y_0)\|^2_{H} \quad \text{for all }t^* \in [0, T].
\end{equation}
\end{lemma}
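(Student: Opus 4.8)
The plan is to derive both inequalities from Bellman's principle of optimality combined with the upper bound \ref{P2}, taking care to track the initial times, since the value function is no longer translation invariant. Throughout I abbreviate $y^*(\cdot):=y_T^*(\cdot;t_0,y_0)$ and $\mathbf{u}^*(\cdot):=\mathbf{u}_T^*(\cdot;t_0,y_0)$ for the optimal pair of \ref{PT}, which exists by \ref{P1}.

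For \eqref{lem2c2} I would first establish the dynamic programming identity
\[
V_T(t_0,y_0)=\int_{t_0}^{t_0+t^*}\ell^*_T(t;t_0,y_0)\,dt+V_{T-t^*}(t_0+t^*,y^*(t_0+t^*)),\qquad t^*\in[0,T].
\]
The inequality ``$\geq$'' holds because the restriction $\mathbf{u}^*|_{[t_0+t^*,t_0+T]}$ is admissible for $OP_{T-t^*}(t_0+t^*,y^*(t_0+t^*))$ and the integral cost splits additively; the inequality ``$\leq$'' follows by concatenating $\mathbf{u}^*|_{[t_0,t_0+t^*]}$ with an optimal control of the tail subproblem, which by uniqueness of solutions reproduces $y^*$ on $[t_0,t_0+t^*]$ and is admissible for \ref{PT}. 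Cancelling the first integral shows that the tail integral equals $V_{T-t^*}(t_0+t^*,y^*(t_0+t^*))$, and applying \ref{P2} with horizon $T-t^*\in[0,T]$ gives \eqref{lem2c2} (the endpoint $t^*=T$ being trivial).

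For \eqref{lem2c1} the left-hand side is a full horizon-$T$ problem started at $t_0+\delta$ from $y^*(t_0+\delta)$, so I would bound it from above by the cost of an admissible control built by concatenation on $[t_0+\delta,t_0+\delta+T]$: use $\mathbf{u}^*$ on $[t_0+\delta,t_0+t^*]$ (legitimate since $\delta\leq t^*\leq T$) and, on the remaining interval of length $T+\delta-t^*$, an optimal control of $OP_{T+\delta-t^*}(t_0+t^*,y^*(t_0+t^*))$. Uniqueness again forces the trajectory to coincide with $y^*$ on $[t_0+\delta,t_0+t^*]$, so the running cost there equals $\ell^*_T(t;t_0,y_0)$, and feasibility yields
\[
V_T(t_0+\delta,y^*(t_0+\delta))\leq\int_{t_0+\delta}^{t_0+t^*}\ell^*_T(t;t_0,y_0)\,dt+V_{T+\delta-t^*}(t_0+t^*,y^*(t_0+t^*)).
\]
Since $t^*\in[\delta,T]$ forces $T+\delta-t^*\in[\delta,T]\subset\mathbb{R}_+$, bounding the last term by \ref{P2} produces exactly \eqref{lem2c1}.

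The main obstacle is not the algebra but the rigorous justification of the two concatenation steps: one must check that the glued control again lies in $L^2(\cdot;U)$ and that the glued trajectory is the unique $W(\cdot;V,V')$-solution taking the correct value in $H$ at the junction time. This rests on the standing assumption that \ref{e6} is uniquely solvable in $W(t_0,t_0+T;V,V')$, on its continuous embedding into $C([t_0,t_0+T];H)$, and on the a~priori bound \eqref{Est1}, which together guarantee that evaluation at the junction is well defined and that the spliced state solves the equation on the whole interval. The nonautonomous character only forces one to carry the shifted initial time into every value-function argument; once this bookkeeping is respected it introduces no essential new difficulty compared with the autonomous case in \cite{AzmiKunisch}.
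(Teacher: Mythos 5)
Your proposal is correct and follows essentially the same route as the paper: both inequalities are obtained from Bellman's optimality principle at the intermediate time $t_0+t^*$ (with the shifted horizons $T+\delta-t^*$ and $T-t^*$) together with the bound \ref{P2}, using the restriction of the optimal pair of \ref{PT} as the admissible competitor. The only difference is that you unpack the dynamic programming principle into explicit restriction/concatenation arguments, whereas the paper invokes it directly; this is a matter of detail, not of approach.
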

\begin{proof}
Due to Bellman's optimality principle and utilizing \ref{P1} and \ref{P2}, we have for every $t^* \in [\delta, T]$
\begin{equation}
\label{e11}
\begin{split}
V_T&( t_0+\delta,y_T^*(t_0+\delta;t_0,y_0))  \\
& = \min_{\mathbf{u} \in L^2(t_0+\delta, t_0+t^*;U)}\left\lbrace \int^{t_0+t^*}_{t_0+\delta} \ell(t,y^{\mathbf{u}}(t), \mathbf{u}(t))dt+ V_{T+\delta-t^*}(t_0+t^*, y^{\mathbf{u}}(t_0+t^*)) \right\rbrace  \\
&\leq \int^{t_0+t^*}_{t_0+\delta} \ell^*_T(t;t_0,y_0)dt+V_{T+\delta-t^*}(t_0+t^*, y_T^*(t_0+t^*;t_0,y_0)) \\
&\leq \int^{t_0+t^*}_{t_0+\delta} \ell^*_T(t;t_0,y_0)dt + \gamma_2(T+\delta-t^*)\| y_T^*(t_0+t^*;t_0,y_0)\|^2_{H},
\end{split}
\end{equation}
where $y^{\mathbf{u}}$ in the above equality is the solution to $LTV(T,t_0,y_0)$ for $\mathbf{u} \in L^2(t_0+\delta, t_0+t^*;U)$.

To prove the second inequality let  $t^* \in [0,T]$  be given. Similarly, to the first inequality, using  Bellman's principle and \eqref{e7}, we have
\begin{equation*}
\int^{t_0+T}_{t_0+t^*} \ell^*_T(t;t_0,y_0)dt = V_{T-t^*}(t_0+t^*, y_T^*(t_0+t^*;t_0,y_0))  \leq\gamma_2(T-t^*)\| y_T^*(t_0+t^*;t_0,y_0)\|^2_{H},
\end{equation*}
as desired.
\end{proof}
\begin{lemma}
\label{lem3}
Suppose that \ref{P1} and \ref{P2} hold. Then for given  $(T,\delta,t_0,y_0) \in  \mathbb{R}^3_+ \times H$  with $T>\delta$ and  the choice
\begin{equation*}
\theta_1(T,\delta) := 1+\frac{\gamma_2(T)}{\alpha_{\ell}(T-\delta)},  \qquad \theta_2(T,\delta) := \frac{\gamma_2(T)}{\alpha_{\ell}\delta},
\end{equation*}
we have the following estimates
\begin{equation}
\label{e23}
V_T(t_0+\delta,y_T^*(t_0+\delta;t_0,y_0)) \leq \theta_1\int^{t_0+T}_{t_0+\delta} \ell^*_T(t;t_0,y_0)dt,
\end{equation}
and
\begin{equation}
\label{e24}
\int^{t_0+T}_{t_0+\delta} \ell^*_T(t;t_0,y_0)dt \leq \theta_2 \int^{t_0+\delta}_{t_0} \ell^*_T(t;t_0,y_0)dt.
\end{equation}
\end{lemma}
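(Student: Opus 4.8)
The plan is to derive both estimates directly from the two inequalities established in Lemma \ref{lem2}, combining them with the coercivity bound \eqref{estiob} and the monotonicity of $\gamma_2$. The coercivity estimate, evaluated along the optimal trajectory, reads $\|y^*_T(t;t_0,y_0)\|^2_H \leq \alpha_{\ell}^{-1}\,\ell^*_T(t;t_0,y_0)$, so squared $H$-norms of the optimal state can always be traded for the running cost. Since $\gamma_2$ is non-decreasing, every argument of the form $\gamma_2(T+\delta-t^*)$ or $\gamma_2(T-t^*)$ appearing in Lemma \ref{lem2} is bounded above by the single constant $\gamma_2(T)$ whenever $t^*\geq\delta$ respectively $t^*\geq 0$. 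The essential device in both parts is to integrate the relevant inequality over the free parameter $t^*$ rather than to fix it: this converts a pointwise bound on $\|y^*_T(t_0+t^*;t_0,y_0)\|^2_H$ into an integral over the optimal trajectory, which the coercivity estimate then controls by an integral of $\ell^*_T$.

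For \eqref{e23} I would begin with \eqref{lem2c1}. For $t^*\in[\delta,T]$ one has $T+\delta-t^*\in[\delta,T]$, hence $\gamma_2(T+\delta-t^*)\leq\gamma_2(T)$, and the right-hand side of \eqref{lem2c1} is at most $\int_{t_0+\delta}^{t_0+t^*}\ell^*_T\,dt + \gamma_2(T)\|y^*_T(t_0+t^*;t_0,y_0)\|^2_H$. Integrating this inequality over $t^*\in[\delta,T]$, the constant left-hand side becomes $(T-\delta)\,V_T(t_0+\delta,y^*_T(t_0+\delta;t_0,y_0))$; the first term on the right is bounded by $(T-\delta)\int_{t_0+\delta}^{t_0+T}\ell^*_T\,dt$ because $\ell^*_T\geq 0$ and $t^*\leq T$; and the second term, after the substitution $s=t_0+t^*$ and the coercivity estimate, is bounded by $\alpha_{\ell}^{-1}\gamma_2(T)\int_{t_0+\delta}^{t_0+T}\ell^*_T\,ds$. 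Dividing through by $T-\delta$ yields precisely $\theta_1\int_{t_0+\delta}^{t_0+T}\ell^*_T\,dt$.

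For \eqref{e24} I would start from \eqref{lem2c2}, replace $\gamma_2(T-t^*)$ by $\gamma_2(T)$, and shrink the lower limit of the left-hand integral from $t_0+t^*$ to $t_0+\delta$, which is legitimate for $t^*\leq\delta$ since $\ell^*_T\geq 0$. This gives $\int_{t_0+\delta}^{t_0+T}\ell^*_T\,dt\leq\gamma_2(T)\|y^*_T(t_0+t^*;t_0,y_0)\|^2_H$ for every $t^*\in[0,\delta]$. Integrating over $t^*\in[0,\delta]$ and applying the same change of variables together with the coercivity estimate leaves $\delta\int_{t_0+\delta}^{t_0+T}\ell^*_T\,dt\leq\alpha_{\ell}^{-1}\gamma_2(T)\int_{t_0}^{t_0+\delta}\ell^*_T\,dt$, and dividing by $\delta$ produces $\theta_2\int_{t_0}^{t_0+\delta}\ell^*_T\,dt$.

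I do not expect a real obstacle, as each step is a routine monotonicity, nonnegativity, or coercivity estimate. The only point requiring care is the recognition that $t^*$ should be averaged out rather than chosen at a single value: a single evaluation of \eqref{lem2c1} or \eqref{lem2c2} controls the state norm only at one instant and cannot be compared directly with the running-cost integrals entering $\theta_1$ and $\theta_2$. It is exactly this integration in $t^*$ that supplies the factor $(T-\delta)^{-1}$ in $\theta_1$ and $\delta^{-1}$ in $\theta_2$.
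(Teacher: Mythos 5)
Your proof is correct and follows essentially the same route as the paper's: both arguments rest on Lemma \ref{lem2}, the monotonicity of $\gamma_2$, and the coercivity bound \eqref{estiob}, and both are averaging arguments that produce the factors $(T-\delta)^{-1}$ and $\delta^{-1}$ in $\theta_1$ and $\theta_2$. The only difference is cosmetic: where you integrate the inequalities of Lemma \ref{lem2} over the free parameter $t^*$, the paper evaluates them at the point where $\|y^*_T(t_0+\cdot\,;t_0,y_0)\|^2_H$ attains its minimum (using continuity of the optimal trajectory in $H$) and then bounds that minimum by the corresponding mean value---the two devices yield identical estimates.
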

\begin{proof}
 To verify \eqref{e23}  recall that $ y_T^*(\cdot;y_0,t_0) \in C([t_0,t_0+T];H)$. Hence  there is a $\bar{t}\in [\delta, T]$ such that
\begin{equation*}
\bar{t}= \argmin_{t \in [\delta, T]}\| y_T^*(t_0+t ; t_0,y_0) \|^2_{H}.
\end{equation*}
By \eqref{estiob}  and  \eqref{lem2c1}, we have
\begin{equation}
\label{e15}
\begin{split}
&V_T(t_0+\delta, y_T^*(t_0+\delta;t_0,y_0)) \stackrel{\text{\eqref{lem2c1}}}{\leq}\int^{t_0+\bar{t}}_{t_0+\delta} \ell^*_T(t;t_0,y_0)dt+\gamma_2(T+\delta-\bar{t})\| y_T^*(t_0+\bar{t};t_0,y_0)\|^2_{H}\\
& \leq \int^{t_0+\bar{t}}_{t_0+\delta} \ell^*_T(t;t_0,y_0)dt+\gamma_2(T)\| y_T^*(t_0+\bar{t};t_0,y_0)\|^2_{H}\\
& \leq \int^{t_0+T}_{t_0+\delta} \ell^*_T(t;t_0,y_0)dt+\frac{\gamma_2(T)}{T-\delta}\int^{t_0+T}_{t_0+\delta}\| y_T^*(t;t_0,y_0)\|^2_{H}dt,\\
& \leq \int^{t_0+T}_{t_0+\delta} \ell^*_T(t;t_0,y_0)dt+\frac{\gamma_2(T)}{\alpha_{\ell}(T-\delta)}\int^{t_0+T}_{t_0+\delta}\ell^*_T(t;t_0,y_0)dt  =(1+\frac{\gamma_2(T)}{\alpha_{\ell}(T-\delta)})\int^{t_0+T}_{t_0+\delta}\ell^*_T(t;t_0,y_0)dt,
\end{split}
\end{equation}
which implies \eqref{e23}.
Turning to  \eqref{e24} we define
\begin{equation*}
\hat{t}= \argmin_{t \in [0,\delta]}\| y_T^*(t_0 +t ; t_0,y_0)\|^2_{H}.
\end{equation*}
Then by \eqref{estiob}  and  \eqref{lem2c2}, we have
\begin{equation}
\label{e155}
\begin{split}
&\int^{t_0+T}_{t_0+\delta} \ell^*_T(t;t_0,y_0)dt \leq \int^{t_0+T}_{t_0+\hat{t}} \ell^*_T(t;t_0,y_0)dt \leq \gamma_2(T-\hat{t})\| y_T^*(t_0+\hat{t};t_0,y_0)\|^2_{H}\\
&\leq \gamma_2(T)\| y_T^*(t_0+\hat{t};t_0,y_0)\|^2_{H} \leq\frac{\gamma_2(T)}{\delta}\int^{t_0+\delta}_{t_0}\| y_T^*(t;t_0,y_0)\|^2_{H}dt\leq \frac{\gamma_2(T)}{\alpha_{\ell}\delta}\int^{t_0+\delta}_{t_0} \ell^*_T(t;t_0,y_0)dt,
\end{split}
\end{equation}
which provides \eqref{e24}.
\end{proof}

\begin{proposition}
\label{pro1}
Suppose that \ref{P1}-\ref{P2} hold and let $\delta>0$ be given. Then there exist $T^*>\delta$ and $\alpha \in (0,1)$ such that  the following inequality is satisfied
\begin{equation}
\label{e20s}
V_T(t_0+\delta ,y^*_T(t_0+\delta;t_0,y_0)) \leq V_T(t_0,y_0)-\alpha \int_{t_0}^{t_0+\delta} \ell^*_T(t;t_0,y_0)dt
\end{equation}
for every $T\geq T^*$ and $(t_0,y_0) \in \mathbb{R}_+ \times H$.
\end{proposition}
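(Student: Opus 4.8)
The plan is to obtain \eqref{e20s} by a purely algebraic combination of the two estimates furnished by Lemma \ref{lem3}, together with the trivial additive splitting of the optimal cost along the optimal trajectory. First I would note that, since the optimal value equals the value function,
\[
V_T(t_0,y_0) = \int_{t_0}^{t_0+T}\ell^*_T(t;t_0,y_0)\,dt = \underbrace{\int_{t_0}^{t_0+\delta}\ell^*_T(t;t_0,y_0)\,dt}_{=:S} + \underbrace{\int_{t_0+\delta}^{t_0+T}\ell^*_T(t;t_0,y_0)\,dt}_{=:R},
\]
so that $V_T(t_0,y_0)=S+R$. Writing $W:=V_T(t_0+\delta,y^*_T(t_0+\delta;t_0,y_0))$, the target inequality \eqref{e20s} is therefore equivalent to $W\le V_T(t_0,y_0)-\alpha S = R+(1-\alpha)S$, which is the form I would aim for.

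Next I would feed in the two conclusions of Lemma \ref{lem3}. Estimate \eqref{e23} reads $W\le\theta_1 R$, and estimate \eqref{e24} reads $R\le\theta_2 S$. Splitting $\theta_1 R = R+(\theta_1-1)R$ and using $R\le\theta_2 S$ in the second summand gives
\[
W \;\le\; R+(\theta_1-1)\theta_2\,S.
\]
Comparing this with the desired form $R+(1-\alpha)S$, it suffices to pick $\alpha$ with $(\theta_1-1)\theta_2\le 1-\alpha$. Crucially, $\theta_1,\theta_2$ depend only on $(T,\delta)$ and the global constants $\gamma_2,\alpha_\ell$, and not on $(t_0,y_0)$, so any admissible $\alpha$ is automatically uniform in the initial data, as required.

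The hard part is to secure a \emph{single} $\alpha\in(0,1)$ valid simultaneously for all $T\ge T^*$, since the natural candidate $\alpha=1-(\theta_1-1)\theta_2$ still depends on $T$. From the definitions in Lemma \ref{lem3},
\[
(\theta_1-1)\theta_2 \;=\; \frac{\gamma_2(T)^2}{\alpha_\ell^2\,\delta\,(T-\delta)},
\]
and here the boundedness of $\gamma_2$ postulated in \ref{P2} is decisive: with $\bar\gamma:=\sup_{T}\gamma_2(T)<\infty$ one has $(\theta_1-1)\theta_2\le \bar\gamma^2/(\alpha_\ell^2\delta(T-\delta))$, which decays to $0$ as $T\to\infty$. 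I would thus fix $T^*>\delta$ so large that $\bar\gamma^2/(\alpha_\ell^2\delta(T^*-\delta))<1$ and set $\alpha:=1-\bar\gamma^2/(\alpha_\ell^2\delta(T^*-\delta))\in(0,1)$; then for every $T\ge T^*$ we get $(\theta_1-1)\theta_2\le\bar\gamma^2/(\alpha_\ell^2\delta(T-\delta))\le 1-\alpha$, and the bound of the previous paragraph becomes $W\le R+(1-\alpha)S=V_T(t_0,y_0)-\alpha S$, i.e. \eqref{e20s}.

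I expect this uniformity step to be the only genuine obstacle: everything else is a rearrangement of the Lemma \ref{lem3} inequalities and requires no new estimate on the dynamics. It is worth flagging that the \emph{boundedness} of $\gamma_2$ (not merely its monotonicity and continuity) is exactly what makes the product $(\theta_1-1)\theta_2$ decay, and hence what makes a horizon $T^*$ and a uniform $\alpha$ exist at all; were $\gamma_2(T)$ allowed to grow like $T$ or faster, the product would fail to decay and no such $T^*$ could be found.
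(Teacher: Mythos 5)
Your proposal is correct and takes essentially the same route as the paper: both decompose $V_T(t_0,y_0)$ along the optimal trajectory, combine estimates \eqref{e23} and \eqref{e24} of Lemma \ref{lem3} to get the factor $(\theta_1-1)\theta_2=\gamma_2^2(T)/(\alpha_\ell^2\delta(T-\delta))$, and use the boundedness of $\gamma_2$ from \ref{P2} to make this factor small uniformly for $T\geq T^*$. Your explicit introduction of $\bar\gamma:=\sup_T\gamma_2(T)$ merely spells out the uniformity step that the paper compresses into the assertion $\alpha(T)\to 1$ as $T\to\infty$ in \eqref{e3}.
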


\begin{proof}
From the definition of $V_T(t_0,y_0)$  we have
\begin{equation*}
\begin{split}
V_T&(t_0+\delta ,y^*_T(t_0+\delta;t_0,y_0)) - V_T(t_0,y_0)=V_T(t_0+\delta ,y^*_T(t_0+\delta;t_0,y_0)) -\int^{t_0+T}_{t_0}\ell^*_T(t;t_0,y_0)dt\\
              \leq&(\theta_1-1)\int^{t_0+T}_{t_0+\delta}\ell^*_T(t;t_0,y_0)dt-\int^{t_0+\delta}_{t_0}\ell^*_T(t;t_0,y_0)dt
              \leq(\theta_2(\theta_1-1)-1)\int^{t_0+\delta}_{t_0}\ell^*_T(t;t_0,y_0)dt,
\end{split}
\end{equation*}
where $\theta_1$ and $\theta_2$ are defined in Lemma \ref{lem3}. Since
\begin{equation}
\label{e3}
 \alpha(T) := 1-\theta_2(T)(\theta_1(T)-1) =1- \frac{\gamma_2^2(T)}{\alpha^2_{\ell}\delta(T-\delta)} \to 1 \text{ for } T\to \infty,
\end{equation}
there exist $T^*>\delta$ and $\alpha(T^*) \in (0,1)$ such that $1-\theta_2(T)(\theta_1(T)-1) \geq \alpha(T^*)$ for all
$T\geq T^*$. This implies \eqref{e20s}.
 \end{proof}
\begin{theorem}[Suboptimality and exponential decay]
\label{subopth}
Suppose that \ref{P1}-\ref{P2} hold, and let a sampling time $\delta>0$ be given. Then there exist numbers $T^* > \delta$, and $\alpha \in (0,1)$,  such that for every fixed  prediction horizon $T \geq T^*$, and every $y_0 \in H$ the receding horizon control $\mathbf{u}_{rh}$ obtained from  Algorithm \ref{RHA2} satisfies the \textbf{suboptimality} inequality
\begin{equation}
\label{ed27}
\alpha V_{\infty}(y_0) \leq \alpha J_{\infty}(\mathbf{u}_{rh};0,y_0)\leq V_T(0,y_0) \leq V_{\infty}(y_0).
\end{equation}
If additionally \ref{P3} holds we have  \textbf{exponential stability}
\begin{equation}
\label{ed28}
\|y_{rh}(t)\|^2_{H} \leq c_He^{-\zeta t}\|y_0\|^2_{H} \quad  \text{ for }  t\geq 0,
\end{equation}
where the positive numbers $\zeta$ and  $c_H$  depend on $\alpha$, $\delta$, and $T$, but are independent of $y_0$.
\end{theorem}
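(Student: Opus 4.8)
The plan is to turn the one-step decrease furnished by Proposition \ref{pro1} into a dissipation recursion along the receding horizon trajectory, telescope it for the suboptimality chain, and then upgrade the resulting nodal geometric decay to a continuous-time exponential bound. Fix $T\geq T^*$ and $\alpha\in(0,1)$ as in Proposition \ref{pro1}. For the suboptimality estimate \eqref{ed27} I would apply \eqref{e20s} at each sampling instance with $(t_0,y_0)$ replaced by $(t_k,y_{rh}(t_k))$, observing that by the construction in Algorithm \ref{RHA2} the pair $(y_{rh},\mathbf{u}_{rh})$ coincides on $[t_k,t_{k+1})$ with the local optimal pair, so that $\int_{t_k}^{t_{k+1}}\ell^*_T(t;t_k,y_{rh}(t_k))\,dt=\int_{t_k}^{t_{k+1}}\ell(t,y_{rh}(t),\mathbf{u}_{rh}(t))\,dt$ and $y_{rh}(t_{k+1})=y^*_T(t_{k+1};t_k,y_{rh}(t_k))$. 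Summing over $k=0,\dots,n-1$ telescopes the value-function terms, and since $V_T\geq 0$ this yields $\alpha\sum_{k=0}^{n-1}\int_{t_k}^{t_{k+1}}\ell(t,y_{rh}(t),\mathbf{u}_{rh}(t))\,dt\leq V_T(0,y_0)$; letting $n\to\infty$ gives $\alpha J_\infty(\mathbf{u}_{rh};0,y_0)\leq V_T(0,y_0)$. The two remaining inequalities are soft: $V_\infty(y_0)\leq J_\infty(\mathbf{u}_{rh};0,y_0)$ because $\mathbf{u}_{rh}$ is admissible for the infinite-horizon problem, and $V_T(0,y_0)\leq V_\infty(y_0)$ because the restriction to $[0,T]$ of an infinite-horizon optimal control is feasible for $OP_T(0,y_0)$ and, as $\ell\geq 0$, cannot have larger cost.

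For the decay in \eqref{ed28} I would first convert the one-step inequality into a contraction of the value function. Using \eqref{e24} from Lemma \ref{lem3} I bound $V_T(t_k,y_{rh}(t_k))=\int_{t_k}^{t_k+\delta}\ell^*_T\,dt+\int_{t_k+\delta}^{t_k+T}\ell^*_T\,dt\leq(1+\theta_2)\int_{t_k}^{t_{k+1}}\ell^*_T\,dt$, hence $\int_{t_k}^{t_{k+1}}\ell^*_T\,dt\geq(1+\theta_2)^{-1}V_T(t_k,y_{rh}(t_k))$. Inserting this lower bound into \eqref{e20s} produces $V_T(t_{k+1},y_{rh}(t_{k+1}))\leq\mu\,V_T(t_k,y_{rh}(t_k))$ with $\mu:=1-\alpha/(1+\theta_2)$, which lies in $(0,1)$ since $\alpha\in(0,1)$ and $\theta_2>0$. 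Iterating gives $V_T(t_k,y_{rh}(t_k))\leq\mu^k V_T(0,y_0)$, and sandwiching with \ref{P3} and \ref{P2} yields $\|y_{rh}(t_k)\|_H^2\leq\gamma_1(T)^{-1}\gamma_2(T)\,\mu^k\|y_0\|_H^2$, i.e. geometric decay at the sampling nodes $t_k=k\delta$.

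The last and most delicate step is to interpolate this decay between sampling instances. On $[t_k,t_{k+1}]$ the receding horizon state is the restriction of $y^*_T(\cdot;t_k,y_{rh}(t_k))$, so I would invoke the a priori bound \eqref{Est1} together with the coercivity \eqref{estiob}, which gives $\|\mathbf{u}^*_T\|_{L^2(t_k,t_k+T;U)}^2\leq\alpha_\ell^{-1}V_T(t_k,y_{rh}(t_k))\leq\alpha_\ell^{-1}\gamma_2(T)\|y_{rh}(t_k)\|_H^2$; combining the two produces a uniform constant $C:=c_T(1+\gamma_2(T)/\alpha_\ell)$ with $\sup_{t\in[t_k,t_{k+1}]}\|y_{rh}(t)\|_H^2\leq C\|y_{rh}(t_k)\|_H^2$. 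Writing $k=\lfloor t/\delta\rfloor>t/\delta-1$, so that $\mu^k<\mu^{-1}e^{-\zeta t}$ with $\zeta:=\delta^{-1}\ln(1/\mu)>0$, then converts the nodal decay into \eqref{ed28} with $c_H:=C\,\gamma_2(T)\gamma_1(T)^{-1}\mu^{-1}$. I expect this interpolation to be the main obstacle: it is the only place where the continuous-time estimate \eqref{Est1} and the possibly large constant $c_T$ enter, and one must check that the fixed choice of $T\geq T^*$ keeps $c_H$ and $\zeta$ finite and independent of $y_0$.
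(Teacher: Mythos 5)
Your proof is correct, and its overall architecture matches the paper's: telescope the one-step decrease of Proposition \ref{pro1} to get \eqref{ed27}, contract $V_T$ along the sampling nodes, sandwich with \ref{P2}--\ref{P3}, and interpolate between nodes via \eqref{Est1} and \eqref{estiob}. The genuine difference is how the contraction is produced. The paper bounds the dissipation term by a short-horizon value function: the restriction of the optimal pair to $[t_{k-1},t_k]$ is admissible for the problem with horizon $\delta$, so $\int_{t_{k-1}}^{t_k}\ell\,dt\geq V_\delta(t_{k-1},y_{rh}(t_{k-1}))\geq \frac{\gamma_1(\delta)}{\gamma_2(T)}V_T(t_{k-1},y_{rh}(t_{k-1}))$, which invokes \ref{P3} at horizon $\delta$ and yields the factor $\eta=1-\alpha\gamma_1(\delta)/\gamma_2(T)$. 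You instead invoke \eqref{e24} of Lemma \ref{lem3} to get $\int_{t_k}^{t_{k+1}}\ell^*_T\,dt\geq(1+\theta_2)^{-1}V_T(t_k,y_{rh}(t_k))$, yielding $\mu=1-\alpha/(1+\theta_2)=1-\alpha\alpha_\ell\delta/(\alpha_\ell\delta+\gamma_2(T))$. Both factors lie in $(0,1)$ and either suffices for the theorem. Your route buys a cleaner separation of hypotheses: the nodal decay of $V_T$ uses only \ref{P1}--\ref{P2} (Lemma \ref{lem3} needs nothing more), and \ref{P3} enters exactly once, at horizon $T$, to convert value decay into state decay. The paper's route buys a decay rate expressed through the observability-type constant $\gamma_1(\delta)$ at the sampling scale, which is precisely the quantity it later makes explicit in \eqref{e88} and uses when discussing the dependence of the decay on $\delta$. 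Two harmless deviations worth noting: you apply \eqref{Est1} with the constant $c_T$ where the paper uses $c_\delta$ on $[t_k,t_{k+1}]$ (both work, since $[t_k,t_{k+1}]\subset[t_k,t_k+T]$), and in the suboptimality chain the admissibility of $\mathbf{u}_{rh}$ for the infinite-horizon problem, i.e.\ $\mathbf{u}_{rh}\in L^2(0,\infty;U)$, should be recorded as a consequence of the telescoped bound together with the coercivity \eqref{estiob}.
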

Utilizing the previous lemmas the proof of this result follows the lines of the verification of \cite[Theorem 1.5]{MR3721863}.  But since we refer to it on several occasions it is provided in Appendix \ref{apend1}.

\begin{remark}
{\em For fixed $\delta >0$, due to inequality \eqref{e3} we have $\lim_{T \to \infty} \alpha(T) = 1$. Thus RHC is asymptotically optimal. Moreover, for fixed $T\geq T^*$
we obtain  that $\alpha \to -\infty$ as $\delta \to 0$. That is,  for arbitrarily small sampling times  $\delta$,  the suboptimality and asymptotic stability of RHC
is not guaranteed.}
\end{remark}

\section{Well-posedness and regularity of solutions}
In this section we are back to the concrete problem \ref{opinf} governed by \eqref{e1}. To summarize useful  well-posedness and regularity properties we first consider
\begin{equation}
\label{e17}
\begin{cases}
\partial_t y(t)-\nu\Delta y(t) + a(t)y(t)+ \nabla \cdot (b(t)y(t)) = f(t)   &\text{ in } (t_0,t_0+T)\times\Omega,\\
y =0   & \text{ on } (t_0,t_0+T)\times \partial \Omega,\\
y(t_0)=y_0 &\text{ on } \Omega.
\end{cases}
\end{equation}
We set $H:=L^2(\Omega ; \mathbb{R})$, $V:=H^1_0(\Omega; \mathbb{R})$, and $V':=H^{-1}(\Omega;\mathbb{R})$,  and endow $V$ with the following scalar product and corresponding norm
\begin{equation*}
(\phi,\psi)_V:=(\nabla \phi ,\nabla \psi)_H,   \quad   \|\phi \|_V :=(\phi,\phi)^{\frac{1}{2}}_V= \|\nabla \phi \|_H    \quad   \text{ for every  }  \phi, \psi \in V.
\end{equation*}
 Throughout it is assumed that
 \begin{equation}
 \label{e56}
 \tag{RA}
 a \in L^{\infty}(0, \infty;  L^r(\Omega;\mathbb{R})) \text{ with } r\geq n :=dim(\Omega), \text{ and } b \in L^{\infty}((0,\infty)\times \Omega ; \mathbb{R}^n),
 \end{equation}
 and we set
 \begin{equation*}
 N(a,b):= \| a\|_{L^{\infty}(0,\infty;L^r(\Omega))}+\| b\|_{L^{\infty}((0,\infty)\times \Omega)}.
 \end{equation*}
 We recall the  notion of weak variational solution for \eqref{e17}:
\begin{definition}
Let  $(T,t_0, y_0, f ) \in   \mathbb{R}^2 \times  H \times L^2(t_0,t_0+T;V') $ be given.  Then,  a function $y \in W(t_0,t_0+T;V,V')$ is referred to as a weak solution of \eqref{e17} if for almost every $t \in (t_0, t_0+T)$ we have
\begin{equation}
\label{e19}
\langle \partial_t y(t),\varphi \rangle_{V',V}+\nu( \nabla y(t),\nabla \varphi)_{H}+ \langle a(t)y(t),\varphi \rangle_{V',V} - (b(t)y(t), \nabla \varphi)_H   =\langle f(t),\varphi \rangle_{V',V} \quad \text{ for all }\varphi \in V,
\end{equation}
and $y(t_0)=y_0$ is satisfied in $H$.
\end{definition}
In the following we present the well-posedness of weak solutions to
\eqref{e17}, as well as an observability type inequality,  which will be essential to derive the exponential stability for RHC.
\begin{proposition}
\label{Theo2}
For every multiple  $(T,t_0, y_0, f ) \in   \mathbb{R}^2 \times  H \times L^2(t_0,t_0+T;V')$ equation \eqref{e17} admits a unique weak solution $y \in W(t_0,t_0+T;V,V')$ satisfying
\begin{equation}
\label{e29}
\|y\|^2_{C([t_0,t_0+T];H)} + \| y \|^2_{W(t_0,t_0+T;V,V')} \leq c_1\left( \|y_0\|^2_H + \| f\|^2_{L^2(t_0,t_0+T;V')} \right), 
\end{equation}
with  $c_1$ depending on  $(T,\nu,a,b,\Omega)$. Moreover, we have the following observability inequality
\begin{equation}
\label{e14}
\|y_0\|^2_H \leq  \hat{c}_{\nu} \left(1 + \frac{1}{T} + N(a,b)\right)\|y\|^2_{L^2(t_0,t_0+T;V)}+ \|f\|^2_{L^2(t_0,t_0+T;V')},
\end{equation}
with  $\hat{c}_{\nu}$ depending only on $(\nu, \Omega)$.
\end{proposition}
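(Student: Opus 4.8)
The plan is to treat \eqref{e17} by the classical variational (Lions--Galerkin) theory for non-autonomous parabolic equations, and then to extract the observability inequality \eqref{e14} from the energy identity by an averaging argument in the terminal time. First I would introduce the time-dependent bilinear form
\[
\mathfrak{a}(t;\phi,\psi):=\nu(\nabla\phi,\nabla\psi)_H+\langle a(t)\phi,\psi\rangle_{V',V}-(b(t)\phi,\nabla\psi)_H ,
\]
so that \eqref{e19} reads $\langle\partial_t y,\varphi\rangle_{V',V}+\mathfrak{a}(t;y,\varphi)=\langle f,\varphi\rangle_{V',V}$. The two structural facts to verify, uniformly in $t$, are boundedness and a G\aa{}rding inequality. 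Boundedness of the principal and convection parts is immediate from Cauchy--Schwarz and $b\in L^\infty$, using Poincar\'e to control $\|\phi\|_H$ by $\|\phi\|_V$; the reaction term is the only delicate one, and here the hypothesis \eqref{e56} with $r\ge n$ enters. By the generalized H\"older inequality $|\langle a\phi,\psi\rangle_{V',V}|\le\|a\|_{L^r}\|\phi\|_{L^p}\|\psi\|_{L^p}$ with $p=2r/(r-1)\le 2n/(n-1)$, and since $H^1_0(\Omega)\hookrightarrow L^{2n/(n-1)}(\Omega)$ the Sobolev embedding gives $|\langle a\phi,\psi\rangle_{V',V}|\le C_\Omega\|a\|_{L^r}\|\phi\|_V\|\psi\|_V$, whence $|\mathfrak{a}(t;\phi,\psi)|\le\hat c_\nu(1+N(a,b))\|\phi\|_V\|\psi\|_V$. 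For coercivity I would interpolate $\|\phi\|_{L^p}$ between $\|\phi\|_V$ and $\|\phi\|_H$ (Gagliardo--Nirenberg) and absorb, via Young's inequality, a fraction $\tfrac{\nu}{4}\|\phi\|_V^2$ of the reaction term, treating the convection term by $|(b\phi,\nabla\phi)_H|\le\tfrac{\nu}{4}\|\phi\|_V^2+C_\nu\|b\|_{L^\infty}^2\|\phi\|_H^2$, to obtain $\mathfrak{a}(t;\phi,\phi)\ge\tfrac{\nu}{2}\|\phi\|_V^2-\lambda\|\phi\|_H^2$ with $\lambda=\lambda(\nu,N(a,b),\Omega)$.

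With boundedness and G\aa{}rding coercivity in hand, existence and uniqueness of $y\in W(t_0,t_0+T;V,V')$ follow from the standard theory \cite{wlokapartial,temam1997infinite}. The estimate \eqref{e29} I would derive by testing with $\varphi=y(t)$, using the identity $\tfrac12\tfrac{d}{dt}\|y\|_H^2=\langle\partial_t y,y\rangle_{V',V}$ valid on $W(t_0,t_0+T;V,V')$, the G\aa{}rding inequality, and $\langle f,y\rangle_{V',V}\le\tfrac{\nu}{4}\|y\|_V^2+\tfrac{1}{\nu}\|f\|_{V'}^2$; Gronwall's lemma then controls $\|y\|_{C([t_0,t_0+T];H)}$ and $\|y\|_{L^2(t_0,t_0+T;V)}$, and the bound on $\partial_t y$ in $L^2(t_0,t_0+T;V')$ is read off from the equation using boundedness of $\mathfrak{a}$. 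The constant $c_1$ inherits exponential dependence on $T$ through Gronwall.

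For the observability inequality \eqref{e14}, the key idea is an averaging argument producing the $1/T$ factor. Testing again with $y$ and integrating from $t_0$ to $t_0+s$ yields
\[
\tfrac12\|y_0\|_H^2=\tfrac12\|y(t_0+s)\|_H^2+\int_{t_0}^{t_0+s}\mathfrak{a}(t;y,y)\,dt-\int_{t_0}^{t_0+s}\langle f,y\rangle_{V',V}\,dt .
\]
Rather than fixing a single $s$, I would integrate this identity over $s\in[0,T]$ and divide by $T$. The endpoint term $\tfrac{1}{2T}\int_0^T\|y(t_0+s)\|_H^2\,ds=\tfrac{1}{2T}\|y\|_{L^2(t_0,t_0+T;H)}^2\le\tfrac{C_\Omega}{2T}\|y\|_{L^2(t_0,t_0+T;V)}^2$ furnishes the $1/T$ contribution via Poincar\'e; the bilinear term, after enlarging the inner integration domain to $[t_0,t_0+T]$, is bounded by $\hat c_\nu(1+N(a,b))\|y\|_{L^2(t_0,t_0+T;V)}^2$, supplying the $N(a,b)$ factor; and the forcing term is controlled by $\|f\|_{L^2(t_0,t_0+T;V')}\|y\|_{L^2(t_0,t_0+T;V)}$ and split by Young's inequality so that, after multiplying through by $2$, the coefficient of $\|f\|_{L^2(t_0,t_0+T;V')}^2$ is exactly $1$. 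Collecting the three $\|y\|_{L^2(t_0,t_0+T;V)}^2$ contributions and absorbing geometric constants into a single $\hat c_\nu$ depending only on $(\nu,\Omega)$ then gives \eqref{e14}.

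The main obstacle I anticipate is not the averaging itself but the bookkeeping of constants in the reaction term: keeping $N(a,b)$ as an explicit, linearly appearing factor while guaranteeing that $\hat c_\nu$ depends only on $(\nu,\Omega)$ forces care that the Sobolev and interpolation constants are dimension-dependent yet $(a,b)$-independent, and that the borderline embedding used at $r=n$, namely $H^1_0(\Omega)\hookrightarrow L^{2n/(n-1)}(\Omega)$, is genuinely available in every dimension (with the cases $n=1,2$ handled separately through $H^1_0\hookrightarrow L^\infty$ and $H^1_0\hookrightarrow L^q$ for all finite $q$, respectively).
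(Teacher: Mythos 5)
Your proposal is correct and takes essentially the same route as the paper: \eqref{e29} comes from testing with $y$, Gronwall, and reading the $\partial_t y$ bound off the equation, and your averaging of the energy identity over the terminal time $s\in[0,T]$ is, by Fubini, exactly the paper's multiplier $\frac{t_0+T-t}{T}\,y(t)$, producing the identical weighted identity with endpoint term $\frac{1}{2T}\|y\|^2_{L^2(t_0,t_0+T;H)}$ and the same treatment of the form and forcing terms. The only organizational difference is that the paper runs a Galerkin scheme using the reaction-term estimates \eqref{e18} (which put one factor in $H$), whereas you verify boundedness plus a G\aa{}rding inequality and invoke the abstract variational theory; these are equivalent here.
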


The proof can be given by standard estimates and is therefore deferred to  Appendix \ref{apend2}.

In order to show the stabilizability of RHC with respect to the $V$-norm we need the notion of the strong solution. Introducing $D(A):= H^2(\Omega)\cap V$, we have the following relations
\begin{equation}
\label{e35}
D(A) \hookrightarrow V \hookrightarrow H=H' \hookrightarrow V' \hookrightarrow D(A)'.
\end{equation}
For any interval $(t_0,t_0+T)$ with $T\in \mathbb{R}_+ \cup \{\infty\}$, we consider
\begin{equation*}
W(t_0,t_0+T;D(A),H):=L^2(t_0,t_0+T;D(A)) \cap H^1(t_0,t_0+T;H),
\end{equation*}
endowed with the norm
\begin{equation*}
\|y\|_{W(t_0,t_0+T;D(A),H)}:=\left(  \| y\|^2_{L^2(t_0,t_0+T;D(A))}+\| \partial_ty\|^2_{L^2(t_0,t_0+T; H)} \right)^{\frac{1}{2}},
\end{equation*}
as the space for strong solutions.  Based on  \eqref{e35}, it is known that $W(t_0,t_0+T;D(A),H)\hookrightarrow C([t_0,t_0+T];V)$, see e.g., \cite{MR0350177}[Chapter 3, Section 1.4 ] and  \cite{MR769654}. Then we have the following notion of strong solution:
\begin{definition}[Strong solution]
A weak solution to \eqref{e17} is called a \textbf{strong} solution, provided that it belongs to  $W(t_0,t_0+T;D(A),H)$.
\end{definition}
In order to obtain the strong solutions for \eqref{e17}, we need to impose the following additional regularity condition on the convection term $b$:
\begin{equation}
\tag{SRA}
\label{e36}
\nabla\cdot b \in L^{\infty}(0,\infty;L^d(\Omega)) \text{ with } d \text{ satisfying  } \begin{cases}  d =2, & \text{ if }  n\in \{1,2,3\},\\                                                                                                                                 d \geq \frac{2n}{3} &   \text{ if } n\geq 4. \end{cases}
\end{equation}
Later, we will use the notation
\begin{equation*}
\tilde{N}(a,b):=\| a\|_{L^{\infty}(0,\infty;L^r(\Omega))}+\| b\|_{L^{\infty}((0,\infty)\times \Omega)}+\|\nabla \cdot b\|_{L^{\infty}(0,\infty;L^r(\Omega))}.
\end{equation*}
In the next theorem, we present the existence result for the strong solution to \eqref{e17}.
\begin{proposition}
\label{Theo4}
Assume that \eqref{e36} holds. Then for every quadruple  $(T,t_0, y_0, f ) \in   \mathbb{R}^2_+ \times  V \times L^2(t_0,t_0+T;H)$, equation \eqref{e17} admits a unique strong solution $y \in W(t_0,t_0+T ; D(A),H)$ satisfying
\begin{equation}
\label{e43}
\|y\|^2_{C([t_0,t_0+T];V)}+ \| y\|^2_{W(t_0,t_0+T;D(A),H)} \leq c_2\left(\| y_0\|^2_V + \| f\|^2_{L^2(t_0,t_0+T;H)} \right),
\end{equation}
where the constant $c_2$  depends on $(T,\nu, \tilde{c}_{a,b}, \Omega)$.
\end{proposition}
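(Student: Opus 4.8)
The plan is to construct the strong solution by a Galerkin approximation driven by a higher-order energy estimate, with uniqueness inherited from the weak theory. Since every strong solution is in particular a weak solution, uniqueness follows at once from Proposition \ref{Theo2}; the entire issue is therefore to produce a solution enjoying the asserted $W(t_0,t_0+T;D(A),H)$-regularity together with \eqref{e43}. First I would set up a Galerkin scheme in the orthonormal basis of eigenfunctions $\{e_j\}$ of the Dirichlet Laplacian, $-\Delta e_j=\lambda_j e_j$, which lie in $D(A)$ and are orthogonal in both $H$ and $V$. For each $m$ the projected problem is a linear ODE system admitting a unique local solution $y_m$, extended globally once the a priori bound below is in force; note that $\|\nabla y_m(t_0)\|_H\le\|y_0\|_V$, which is finite precisely because $y_0\in V$.

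The heart of the matter is the a priori estimate, obtained by testing the approximate equation with the admissible function $-\Delta y_m$. After integration by parts this yields
\begin{equation*}
\tfrac12\tfrac{d}{dt}\|\nabla y_m\|^2_H+\nu\|\Delta y_m\|^2_H=\int_\Omega a\,y_m\,\Delta y_m+\int_\Omega \nabla\!\cdot\!(b\,y_m)\,\Delta y_m-\int_\Omega f\,\Delta y_m,
\end{equation*}
where the principal part supplies the coercive term $\nu\|\Delta y_m\|^2_H$, and $\|y_m\|_{H^2}$ is controlled by $\|\Delta y_m\|_H+\|y_m\|_H$ through elliptic regularity on the smooth domain $\Omega$. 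The forcing term is dominated by $\tfrac{\nu}{8}\|\Delta y_m\|^2_H+\tfrac{C}{\nu}\|f\|^2_H$. The key point is to absorb the lower-order terms into the coercive term: splitting $\nabla\!\cdot\!(b\,y_m)=(\nabla\!\cdot\! b)\,y_m+b\!\cdot\!\nabla y_m$, the convective piece is bounded by $\|b\|_{L^\infty}\|\nabla y_m\|_H\|\Delta y_m\|_H$, while the reaction term and the term involving $\nabla\!\cdot\! b$ are treated by H\"older's inequality together with Sobolev embeddings and, in the borderline cases, a Gagliardo--Nirenberg interpolation of the form $\|y_m\|_{L^q}\le C\|y_m\|^{1-\theta}_V\|y_m\|^{\theta}_{H^2}$ with $\theta<1$. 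It is exactly here that the hypotheses $r\ge n$ in \eqref{e56} and $d\ge 2n/3$ (respectively $d=2$ for $n\le 3$) in \eqref{e36} enter: they guarantee that the required embeddings hold and that the interpolation exponent $\theta$ stays strictly below one, so that via Young's inequality a fraction of $\nu\|\Delta y_m\|^2_H$ can be moved to the left, leaving only a multiple of $\|y_m\|^2_V$ with constants controlled by $\tilde{N}(a,b)$.

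Once the lower-order terms are absorbed, the differential inequality reads $\tfrac{d}{dt}\|y_m\|^2_V+\nu\|\Delta y_m\|^2_H\le C(\tilde{N}(a,b))\|y_m\|^2_V+\tfrac{C}{\nu}\|f\|^2_H$, and Gr\"onwall's lemma gives, uniformly in $m$, a bound on $\|y_m\|_{C([t_0,t_0+T];V)}$ and $\|y_m\|_{L^2(t_0,t_0+T;D(A))}$ by the right-hand side of \eqref{e43}. Reading off the time derivative from the equation, $\partial_t y_m=\nu\Delta y_m-a\,y_m-\nabla\!\cdot\!(b\,y_m)+f$, each summand is then bounded in $L^2(t_0,t_0+T;H)$, so $\|\partial_t y_m\|_{L^2(H)}$ obeys the same bound and the full $W(t_0,t_0+T;D(A),H)$-norm is controlled. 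I would then pass to the limit, extracting weak-$*$ limits in $L^2(D(A))\cap H^1(H)$ and using Aubin--Lions to obtain strong $L^2(H)$ convergence for the products $a\,y_m$ and $b\,y_m$; the limit $y$ solves \eqref{e17}, lies in $W(t_0,t_0+T;D(A),H)$, and inherits \eqref{e43} by weak lower semicontinuity, while the $C([t_0,t_0+T];V)$-part follows from the embedding $W(t_0,t_0+T;D(A),H)\hookrightarrow C([t_0,t_0+T];V)$. The main obstacle is precisely the borderline absorption of the $(\nabla\!\cdot\! b)\,y$ term (and, when $n=2$ and $r=n$, of the reaction term), where the critical Sobolev/Gagliardo--Nirenberg bookkeeping must be carried out carefully to keep $\theta<1$; the remaining steps are routine.
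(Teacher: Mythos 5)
Your proposal is correct and follows essentially the same route as the paper's proof: a Galerkin scheme tested against $-\Delta y_m$, the splitting $\nabla\cdot(b\,y_m)=(\nabla\cdot b)\,y_m+b\cdot\nabla y_m$, Agmon (for $n\le 3$) and Gagliardo--Nirenberg (for $n\ge 4$) interpolation with exponent strictly below one---exactly where \eqref{e56} and \eqref{e36} enter---so that Young's inequality absorbs the lower-order terms, followed by Gr\"onwall, the $\partial_t y_m$ bound read off from the equation, weak limit extraction, and the embedding $W(t_0,t_0+T;D(A),H)\hookrightarrow C([t_0,t_0+T];V)$. The only cosmetic difference is your invocation of Aubin--Lions in the limit passage, which is unnecessary for this linear equation (weak convergence of $y_m$ already passes the fixed-coefficient products $a\,y_m$ and $b\,y_m$ to the limit) but harmless.
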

\begin{proof}
Similarly to the proof of Proposition \ref{Theo2}, the proof uses Galerkin approximations.  We rely on subsequences which converge weakly  in $L^2(t_0,t_0+T;D(A))$ and weakly-star in $L^{\infty}(t_0,t_0+T;V)$. To show this, we need to derive some a-priori estimates. Throughout, $c>0$ is a generic constant  that depends only on $\Omega$ and $\nu$.  Assume that  $y$ is regular enough.  By multiplying  equation \eqref{e17} by $-\Delta y(t)$,  we can write for almost every $t \in (t_0,t_0+T)$ that
\begin{equation}
\label{e38}
\frac{d}{2dt}\| \nabla y(t)\|^2_H +\nu\| \Delta y(t)\|^2_H \leq  |(a(t)y(t),\Delta  y(t))_H| + |(\nabla \cdot (b(t)y(t)),\Delta y(t))_H|+| (f(t),\Delta y(t))_H|.
\end{equation}
 Using $\eqref{e37}$,  we have
\begin{equation}
\label{e39}
|(a(t)y(t),\Delta y(t))_H | \leq \|a\|_{L^{\infty}(0,\infty ; L^r(\Omega))}\|y(t)\|_V\|\Delta y(t)\|_H  \text{ for almost every } t \in (t_0,t_0+T).
\end{equation}
Moreover, using  the fact that  $\nabla \cdot (b(t)y(t))= (\nabla\cdot b(t))y(t)+ b(t) \cdot\nabla y(t)$,  we obtain
\begin{equation}
\label{e40}
\begin{split}
|(\nabla \cdot (b(t)y(t)),\Delta y(t))_H|& \leq |(\nabla\cdot b(t))y(t), \Delta  y(t))_H| + |(b(t) \cdot\nabla y(t),\Delta  y(t))_H|.
\end{split}
\end{equation}
Now, we consider the two cases $n\leq 3$ and $n\geq 4$ separately.  For the case that $1\leq  n \leq 3$, due to  Agmon's inequality \cite{MR2589244}[Lemma 13.2] we have $D(A) \hookrightarrow L^{\infty}(\Omega)$ and, thus, we can write for almost every $t \in (t_0,t_0+T)$ that
\begin{equation}
\label{e41a}
|(\nabla\cdot b(t))y(t), \Delta  y(t))_H|   \leq  \|(\nabla\cdot b)(t)\|_{H}\| y(t)\|_{L^{\infty}(\Omega)}\|  \Delta  y(t)\|_H \leq c \|(\nabla\cdot b)(t)\|_{H}\|y(t)\|^{\frac{1}{2}}_{V}\|\Delta y(t)\|^{\frac{3}{2}}_{H}.
\end{equation}
Whereas, for the case $n\geq 4$,  due to the  Gagliardo-Nirenberg interpolation inequality \cite{MR2527916},  we have $D(A) \hookrightarrow L^{\frac{2n}{n-3}}(\Omega)$  with
\begin{equation*}
\|y(t) \|_{L^{\frac{2n}{n-3}}(\Omega)} \leq  c \left( \| y(t)\|^{\frac{1}{2}}_{L^{\frac{2n}{n-2}}(\Omega)} \|\Delta  y(t)\|^{\frac{1}{2}}_{H}\right),
\end{equation*}
and, as a consequence, due to the fact that $V \hookrightarrow L^{\frac{2n}{n-2}}(\Omega)$, we can write
\begin{equation}
\begin{split}
\label{e41b}
|(\nabla\cdot b(t))y(t), \Delta  y(t))_H|  &\leq \|(\nabla\cdot b)(t)\|_{L^{\frac{2n}{3}}(\Omega)} \|y(t) \|_{L^{\frac{2n}{n-3}}(\Omega)}\| \Delta  y(t)\|_{H} \\
 & \leq  c\|(\nabla\cdot b)(t)\|_{L^d(\Omega)} \|y(t)\|^{\frac{1}{2}}_{V}\|\Delta  y(t)\|^{\frac{3}{2}}_{H},
\end{split}
\end{equation}
Then, due to \eqref{e29}, \eqref{e38}, \eqref{e39}, \eqref{e40}, \eqref{e41a}, \eqref{e41b}, and  Young's and Gronwall's inequalities,  we obtain
\begin{equation}
\label{e41}
\|y\|^2_{L^{\infty}(t_0,t_0+T;V)}+ \nu\| y\|^2_{L^2(t_0,t_0+T;D(A))} \leq {c'}_2\left(\| y_0\|^2_V + \| f\|^2_{L^2(t_0,t_0+T;H)} \right),
\end{equation}
where ${c'}_2 :={c'}_2(\tilde{N}(a,b),T)$.

Further,  from  \eqref{e39}, \eqref{e40}, and  \eqref{e41}, we can infer that
\begin{equation}
\label{e42}
\begin{split}
\|\partial_t y\|_{L^2(t_0,t_0+T;H)} \leq c \left(\nu+\tilde{N}(a,b)\right) \| y\|_{L^2(t_0,t_0+T;D(A))} + \|f\|_{L^2(t_0,t_0+T;H)}.
\end{split}
\end{equation}
From \eqref{e42} we can extract a subsequence which is also weakly convergent with respect to $H^1(t_0,t_0+T, H)$. Hence \eqref{e43} follows form \eqref{e41}, \eqref{e42}, and the fact that $W(t_0,t_0+T;D(A);H)$ is continuously embedded in the space $C([t_0,t_0+T];V)$.
\end{proof}
In the following  lemma,  an estimate expressing the smoothing property of \eqref{e17}  will be given. This estimate is essential to derive the exponential stability of $RHC$ with respect to the $V$-norm.
\begin{lemma}
\label{lem4}
Let the regularity condition \eqref{e36} be satisfied and  $(T,t_0, y_0, f ) \in   \mathbb{R}^2 \times  H \times L^2(t_0,t_0+T;H)$ be given. Then for the solution $y$ to \eqref{e17} we have the following estimate
\begin{equation}
\label{e45}
\|y(T+t_0)\|^2_{V} \leq c_3(T)\left(\|y_0\|^2_H + \| f\|^2_{L^2(t_0,t_0+T;H)} \right),
\end{equation}
where $c_3(T)=c_3(T,\nu,a,b)>0$.
\end{lemma}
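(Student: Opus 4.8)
The plan is to read Lemma \ref{lem4} as a parabolic smoothing statement: although the initial datum $y_0$ only lies in $H$, the instantaneous regularization of \eqref{e17} should place $y(t_0+T)$ in $V$, with a constant that degenerates as $T\to 0$ (hence the $1/T$ factor one expects in $c_3(T)$). Rather than redo an energy estimate from scratch, I would combine the two results already at hand. First, since $f\in L^2(t_0,t_0+T;H)\hookrightarrow L^2(t_0,t_0+T;V')$ with $\|f\|_{L^2(t_0,t_0+T;V')}\le c\,\|f\|_{L^2(t_0,t_0+T;H)}$, Proposition \ref{Theo2} yields a unique weak solution $y\in W(t_0,t_0+T;V,V')$ and, through \eqref{e29}, the bound $\|y\|^2_{L^2(t_0,t_0+T;V)}\le c\,(\|y_0\|^2_H+\|f\|^2_{L^2(t_0,t_0+T;H)})$.

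The second step is a time-shift that upgrades the datum from $H$ to $V$. Because $y\in L^2(t_0,t_0+T;V)$, a Chebyshev/mean-value argument on the subinterval $(t_0,t_0+T/2)$ produces a time $\tau_0\in(t_0,t_0+T/2)$ at which $y(\tau_0)\in V$ and $\|y(\tau_0)\|^2_V\le \frac{2}{T}\|y\|^2_{L^2(t_0,t_0+T;V)}$. On $[\tau_0,t_0+T]$ the restriction of $y$ is the weak solution of \eqref{e17} with initial datum $y(\tau_0)\in V$; since \eqref{e36} is assumed and $f\in L^2(\tau_0,t_0+T;H)$, Proposition \ref{Theo4} furnishes a strong solution on this interval which, by uniqueness of weak solutions, coincides with $y$. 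In particular $y\in C([\tau_0,t_0+T];V)$ and \eqref{e43} gives $\|y(t_0+T)\|^2_V\le c_2\,(\|y(\tau_0)\|^2_V+\|f\|^2_{L^2(\tau_0,t_0+T;H)})$. Inserting the bound on $\|y(\tau_0)\|^2_V$ and the first-step estimate yields $\|y(t_0+T)\|^2_V\le c_3(T)(\|y_0\|^2_H+\|f\|^2_{L^2(t_0,t_0+T;H)})$ with $c_3(T)$ carrying the factor $1/T$.

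The one point needing care — the main obstacle — is the dependence of the constant $c_2$ from \eqref{e43} on the horizon length, which here is $t_0+T-\tau_0\in[T/2,T]$ rather than fixed; since that length is bounded above by $T$ and the constant in Proposition \ref{Theo4} is nondecreasing in the horizon, one may replace $c_2$ by its value at horizon length $T$, preserving the claimed dependence $c_3=c_3(T,\nu,a,b)$. An alternative, self-contained route avoids the shift altogether: multiply \eqref{e17} by the weighted test function $(t-t_0)(-\Delta y(t))$ and integrate in time, obtaining $\frac{T}{2}\|\nabla y(t_0+T)\|^2_H+\nu\int_{t_0}^{t_0+T}(t-t_0)\|\Delta y\|^2_H\,dt=\frac{1}{2}\int_{t_0}^{t_0+T}\|\nabla y\|^2_H\,dt+\int_{t_0}^{t_0+T}(t-t_0)\,(f-a y-\nabla\cdot(by),-\Delta y)_H\,dt$. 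Here the weight $(t-t_0)$ makes the boundary term at $t_0$ vanish, so no $\|\nabla y_0\|$ ever appears; the reaction and convection terms are absorbed exactly as in \eqref{e39}--\eqref{e41b} (Young's inequality sending the top-order part into $\nu(t-t_0)\|\Delta y\|^2_H$), and $\int_{t_0}^{t_0+T}\|\nabla y\|^2_H\,dt$ is controlled by the first step. In this route the delicate part is the rigorous justification of testing with $-\Delta y$ when $y_0\notin V$, which I would carry out on the Galerkin level as in the proof of Proposition \ref{Theo4} and then pass to the limit.
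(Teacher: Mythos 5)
Your proposal is correct, and your \emph{primary} route is genuinely different from the paper's; interestingly, your ``alternative, self-contained route'' at the end \emph{is} the paper's proof. The paper multiplies the inequality \eqref{e38} (obtained by testing with $-\Delta y$) by the weight $t-t_0$, absorbs the reaction and convection terms via \eqref{e39}--\eqref{e40} and Young's inequality, integrates, and invokes \eqref{e29} and Gronwall to obtain
$\|\sqrt{\cdot-t_0}\,y\|^2_{L^{\infty}(t_0,t_0+T;V)}+\nu\|\sqrt{\cdot-t_0}\,y\|^2_{L^2(t_0,t_0+T;D(A))}\leq \tilde{C}\left(\|y_0\|^2_H+\|f\|^2_{L^2(t_0,t_0+T;H)}\right)$,
with the rigorous justification of testing with $-\Delta y$ deferred to the Galerkin level as in \cite{MR769654} --- exactly the caveat you raise. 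Your main route --- pick $\tau_0\in(t_0,t_0+T/2)$ by a Chebyshev argument so that $\|y(\tau_0)\|^2_V\leq \frac{2}{T}\|y\|^2_{L^2(t_0,t_0+T;V)}$, restart the equation at $\tau_0$ with Proposition \ref{Theo4}, and identify the strong and weak solutions by uniqueness --- is a softer, modular bootstrap. What it buys: no new a priori estimate or Galerkin computation; everything is recycled from Propositions \ref{Theo2} and \ref{Theo4}, and the $1/T$ degeneration of $c_3(T)$ is transparent. What the paper's weighted estimate buys: strictly more information, namely the full smoothing statements $\sqrt{\cdot-t_0}\,y\in L^2(t_0,t_0+T;D(A))\cap L^{\infty}(t_0,t_0+T;V)$ and $\sqrt{\cdot-t_0}\,\partial_t y\in L^2(t_0,t_0+T;H)$, hence $y\in C([t_0+\epsilon,t_0+T];V)$ for every $\epsilon>0$, rather than only the endpoint bound. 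Two small points in your route do need the care you flag, but neither is a gap: (i) the dependence of $c_2$ on the horizon length $t_0+T-\tau_0\in[T/2,T]$ can be handled either by noting the Gronwall constant is nondecreasing in the horizon or by extending $f$ by zero to an interval of length exactly $T$ and using uniqueness; (ii) the pointwise value $y(\tau_0)$ of the $C([t_0,t_0+T];H)$-representative must coincide with an element of $V$, which holds for almost every $\tau_0$, and the Chebyshev selection indeed produces a set of such times of positive measure.
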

\begin{proof}
First we show that
\begin{equation}
\label{e46}
\sqrt{\cdot-t_0}y\in L^2(t_0,t_0+T;D(A)) \cup L^{\infty}(t_0,t_0+T;V)   \text{ and  }   \sqrt{\cdot-t_0}\partial_t y \in L^2(t_0,t_0+T;H).
\end{equation}
For the verification of \eqref{e46}, we follow a similar argument to the one given  in \cite{MR769654}[Theorem 3.10] which is done  by using Galerkin approximation and  a-priori estimates. Here we limit ourselves to derive the estimates. Multiplying \eqref{e38} with $t-t_0$ for $t \in (t_0,t_0+T)$, using the estimates \eqref{e39}, \eqref{e40}, and Young's inequality, we obtain
\begin{equation*}
\begin{split}
&\frac{d}{2dt}\|\sqrt{t-t_0}\nabla y(t)\|^2_H +\nu\|\sqrt{t-t_0}\Delta y(t)\|^2_H \\
&\leq   \frac{1}{2}\|y(t)\|^2_V  + (t-t_0)\left(|(a(t)y(t),\Delta  y(t))_H| + |(\nabla \cdot (b(t)y(t)),\Delta y(t))_H|+| (f(t),\Delta y(t))_H)|\right)\\
&\leq   \frac{1}{2}\|y(t)\|^2_V  + \tilde{N}(a,b) \|\sqrt{t-t_0}y(t)\|_V \|\sqrt{t-t_0}\Delta y(t)\|_H+ \| \sqrt{t-t_0}f(t)\|_H \|\sqrt{t-t_0}\Delta y(t)\|_H \\
&\leq   \frac{1}{2}\|y(t)\|^2_V  +\frac{\tilde{N}^2(a,b)}{\nu} \|\sqrt{t-t_0}y(t)\|^2_V+  \frac{\nu}{2}\|\sqrt{t-t_0}\Delta y(t)\|^2_H+ \frac{1}{\nu}\| \sqrt{t-t_0}f(t)\|^2_H
\end{split}
\end{equation*}
Integrating from $t_0$ to $t_0+T$, estimate \eqref{e29}, and Gronwall's inequality, we have that
\begin{equation}
\label{e47}
\|\sqrt{\cdot-t_0}y\|^2_{L^{\infty}(t_0,t_0+T;V)} + \nu \|\sqrt{\cdot-t_0}y \|^2_{L^2(t_0,t_0+T;D(A))}  \leq  \tilde{C} \left( \|y_0\|^2_H + \|f\|^2_{L^2(t_0,t_0+T,H)} \right)
\end{equation}
where the  constant $\tilde{C}>0$ depends on $(T,a,b,\nu)$ and the embedding from $L^2(t_0,t_0+T;H)$ to $L^2(t_0,t_0+T;V')$.  In a similar manner as in \eqref{e42}, it can be shown that
\begin{equation*}
\sqrt{\cdot-t_0}\partial_t y =  \sqrt{\cdot-t_0}\left(\nu\Delta y - ay- \nabla \cdot (by) + f\right) \in L^2(t_0,t_0+T;H).
\end{equation*}
and, thus, from \eqref{e46} and \eqref{e47}, we can conclude that  $\sqrt{\cdot-t_0}y  \in C([t_0+\epsilon,t_0+T];V)$  for every $0<\epsilon \leq T$ and
\begin{equation*}
\|\sqrt{T} y(t_0+T) \|_V^2 \leq  \|\sqrt{\cdot-t_0}y \|^2_{L^{\infty}(t_0,t_0+T;V)} \leq \tilde{C} \left( \|y_0\|^2_H + \|f\|^2_{L^2(t_0,t_0+T,H)} \right).
\end{equation*}
Therefore, we are finished with the verification of \eqref{e45}.
\end{proof}
\section{Well-posedness of the Finite-horizon Problems}
In Step 2 of Algorithm \ref{RHA}  repeated solving of finite horizon optimal control problems of the form \ref{PT} is necessary. Here we  investigate  these optimal control problems.
For a set of actuators $\mathcal{U}_{\omega}:=\{\Phi_i : i=1,\dots,N \} \subset H$ and a  triple $(t_0,T,y_0) \in \mathbb{R}_+^2 \times H$  we consider
\begin{align}
\label{e48}
\tag*{$OP^p_T(t_0,y_0)$}
&\min_{\mathbf{u}\in L^2(t_0,t_0+T;\mathbb{R}^N)}J^p_T(\mathbf{u};t_0,y_0):= \int^{t_0+T}_{t_0}(\frac{1}{2}\| y(t)\|^2_{V}+\frac{\beta}{2} |\mathbf{u}(t)|^2_{*})dt  \\ \label{e50}
\text{ s.t } &\begin{cases}
\partial_t y(t)-\nu\Delta y(t) + a(t)y(t)+ \nabla \cdot (b(t)y(t)) =   B_{\mathcal{U}_{\omega}}\mathbf{u}(t)   &\text{ in } (t_0,t_0+T)\times\Omega,\\
y =0   & \text{ on } (t_0,t_0+T)\times \partial \Omega,\\
y(t_0)=y_0 &\text{ on } \Omega,
\end{cases}
\end{align}
 where  $B_{\mathcal{U}_{\omega}}:=\left[ \Phi_1, \dots, \Phi_N \right]$. These problems can be rewritten as
\begin{equation}
\label{e94}
 J^p_{T}(\mathbf{u};t_0,y_0) =  \mathcal{F}(\mathbf{u})+\mathcal{G}(\mathbf{u}),
\end{equation}
where $\mathcal{F}(\mathbf{u}):= \frac{1}{2}\| L_T^{t_0,y_0} \mathbf{u}\|^2_{L^2(t_0,t_0+T;V)}$, with $L_T^{t_0,y_0}$  the solution operator for  \eqref{e50}, and  $\mathcal{G}(\mathbf{u}):=\frac{\beta}{2}\int^{t_0+T}_{t_0}| \mathbf{u}(t)|^2_* $.  By  Proposition \ref{Theo2}, it follows that  $\mathcal{F}$ is well-defined, convex,  and $C^1$.   Moreover, $\mathcal{G}$ is a proper convex function,  and it is  nonsmooth in case $|\cdot|_* = |\cdot|_1$. Hence,  the nonnegative objective function $J^p_{T}(\mathbf{u};t_0,y_0)$ is weakly lower semi-continuous and coercive,  and  existence of a unique minimizer to  \ref{e48} follows from the  direct method in the calculus of variations, see e.g., \cite{MR2361288}.  Uniqueness follows from the strict convexity of $\mathcal{F}$  which  is justified due to the injectivity of $L_T^{t_0,y_0}$.

\begin{proposition}[Verification of \ref{P1}]
\label{prop1}
For every triple $(t_0,T,y_0) \in \mathbb{R}_+^2 \times H$, the finite horizon  problem \ref{e48} admits a unique minimizer.
\end{proposition}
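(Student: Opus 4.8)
The plan is to exploit the additive splitting $J^p_T = \mathcal{F}+\mathcal{G}$ recorded in \eqref{e94} and to run the direct method of the calculus of variations on the Hilbert space $L^2(t_0,t_0+T;\mathbb{R}^N)$: existence will come from coercivity together with weak lower semicontinuity, and uniqueness from strict convexity. First I would pin down the structural properties of the two summands. For $\mathcal{F}$, Proposition \ref{Theo2} guarantees that with $f=B_{\mathcal{U}_{\omega}}\mathbf{u}$ the state equation \eqref{e50} possesses a unique weak solution, so the affine control-to-state map $L_T^{t_0,y_0}$ is well-defined; moreover, since the actuators are fixed elements of $H\hookrightarrow V'$ we have $\|B_{\mathcal{U}_{\omega}}\mathbf{u}(t)\|_{V'}\le C|\mathbf{u}(t)|$, and then the a priori estimate \eqref{e29} shows that $L_T^{t_0,y_0}$ maps $L^2(t_0,t_0+T;\mathbb{R}^N)$ continuously into $L^2(t_0,t_0+T;V)$. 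Being the composition of a continuous affine map with a squared Hilbert norm, $\mathcal{F}$ is convex and $C^1$. For $\mathcal{G}$, in the $\ell^2$ case it is a continuous quadratic form, while in the $\ell^1$ case convexity of $\mathbf{u}\mapsto\frac{\beta}{2}|\mathbf{u}|_1^2$ follows because $|\cdot|_1$ is a norm and $t\mapsto t^2$ is convex and nondecreasing on $\mathbb{R}_+$; in either case $\mathcal{G}$ is proper, convex, and strongly lower semicontinuous.

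Next I would verify coercivity and weak lower semicontinuity. Coercivity is immediate from $|\mathbf{u}|_1\ge|\mathbf{u}|_2$, which gives $\mathcal{G}(\mathbf{u})\ge\frac{\beta}{2}\|\mathbf{u}\|^2_{L^2(t_0,t_0+T;\mathbb{R}^N)}$, so that, using $\mathcal{F}\ge 0$, we obtain $J^p_T(\mathbf{u};t_0,y_0)\ge\frac{\beta}{2}\|\mathbf{u}\|^2_{L^2}\to\infty$ as $\|\mathbf{u}\|_{L^2}\to\infty$. Since $J^p_T$ is convex and lower semicontinuous, it is weakly lower semicontinuous. Taking a minimizing sequence $\mathbf{u}_n$, coercivity bounds it in $L^2(t_0,t_0+T;\mathbb{R}^N)$, so a subsequence converges weakly to some $\bar{\mathbf{u}}$, and weak lower semicontinuity yields $J^p_T(\bar{\mathbf{u}};t_0,y_0)\le\liminf_n J^p_T(\mathbf{u}_n;t_0,y_0)=\inf J^p_T$, i.e.\ $\bar{\mathbf{u}}$ is a minimizer. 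This establishes \ref{P1}.

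Finally, for uniqueness I would argue that $\mathcal{F}$ is strictly convex. Writing $L_T^{t_0,y_0}\mathbf{u}=z_0+\Lambda\mathbf{u}$, with $z_0$ the free response (corresponding to $\mathbf{u}=0$) and $\Lambda$ the linear part, strict convexity of $\mathbf{u}\mapsto\frac{1}{2}\|z_0+\Lambda\mathbf{u}\|^2_{L^2(t_0,t_0+T;V)}$ is equivalent to injectivity of $\Lambda$. The hard part, and the only place a structural hypothesis really enters, is precisely this injectivity: if $\Lambda\mathbf{u}\equiv 0$ in $L^2(t_0,t_0+T;V)$ then the associated zero-initial-data state vanishes identically, so the equation \eqref{e50} forces $\sum_{i=1}^N u_i(t)\Phi_i=0$ for almost every $t$, and injectivity thus reduces exactly to the linear independence of the actuators $\Phi_1,\dots,\Phi_N$ in $H$. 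Granting this, $\mathcal{F}$ is strictly convex and $\mathcal{G}$ is convex, so $J^p_T$ is strictly convex and admits at most one minimizer, which combined with the existence step above yields the assertion.
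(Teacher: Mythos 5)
Your proposal follows essentially the same route as the paper's proof: the splitting \eqref{e94} into $\mathcal{F}+\mathcal{G}$, existence by the direct method via coercivity and weak lower semicontinuity, and uniqueness via strict convexity of $\mathcal{F}$ obtained from injectivity of the control-to-state map. The only difference is that you spell out what the paper asserts without proof, namely that injectivity of the linear part of $L_T^{t_0,y_0}$ reduces to the linear independence of the actuators $\Phi_1,\dots,\Phi_N$ in $H$ (a hypothesis the paper makes explicit only in Section 5), which is a worthwhile clarification but not a different argument.
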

Next, we derive the first-order optimality condition for \ref{e48}. Since $\mathcal{F}$ is smooth and $\dom(\mathcal{G}) =L^2(t_0,t_0+T;\mathbb{R}^N)$, the first-order optimality condition for the minimizer $\mathbf{u}^*$ can be written as
\begin{equation}
\label{e51}
0 \in \partial (\mathcal{F}+\mathcal{G})(\mathbf{u}^*) = \partial \mathcal{F}(\mathbf{u}^*)+ \partial\mathcal{G}(\mathbf{u}^*)=\{ \mathcal{F}'(\mathbf{u}^*)\}+\partial\mathcal{G}(\mathbf{u}^*),
\end{equation}
where $\mathcal{F}'$ is the first Fr\'echet derivative.  We introduce the following adjoint equation,
\begin{equation}
\label{e52}
\begin{cases}
-\partial_tp(t)-\nu \Delta p(t)+ a(t)p(t)-(b(t) \cdot \nabla p(t)) = -\Delta y^*(t)    &\text{ in } (t_0,t_0+T)\times\Omega,\\
p =0   & \text{ on } (t_0,t_0+T)\times \partial \Omega,\\
p(t_0+T)=0 &\text{ on } \Omega,
\end{cases}
\end{equation}
with $y^*(\mathbf{u}^*) \in W(t_0,t_0+T;V,V')$ as the solution of \eqref{e50} for $\mathbf{u}^* \in L^2(t_0,t_0+T;\mathbb{R}^N) $  in the place of $\mathbf{u}$. Then  $\mathcal{F}'(\mathbf{u}^*)$ can be expressed as  $\mathcal{F}'(\mathbf{u}^*) =-B_{\mathcal{U}_{\omega}}^*p$ in  $L^2(t_0,t_0+T: \mathbb{R}^N )$. Therefore,  the optimality condition \eqref{e51} can be stated as
\begin{equation}
\label{e55}
B_{\mathcal{U}_{\omega}}^*p \in \partial \mathcal{G}(\mathbf{u}^*),
\end{equation}
where $B_{\mathcal{U}_{\omega}}^*$ is the adjoint operator to $B_{\mathcal{U}_{\omega}}$ and $p(y^*) \in W(t_0,t_0+T;V,V')$.  Well-posedness of the adjoint equation follows from a similar argument as in the proof of Proposition \ref{Theo2} and the fact that  $ \Delta y^* \in L^2(t_0, t_0+T; V')$.

To deal with the sub-problems of the form \ref{e48} numerically,  we employ a proximal point-type algorithm. These methods are based on the iterative evaluations of the proximal operator
\begin{equation*}
\prox_{\mathcal{G}}(\hat{\mathbf{u}}): L^2(t_0,t_0+T;\mathbb{R}^N)  \to L^2(t_0,t_0+T;\mathbb{R}^N),
\end{equation*}
which is defined by
\begin{equation*}
\prox_{\mathcal{G}}(\hat{\mathbf{u}}) := \argmin_{\mathbf{u} \in L^2(t_0,t_0+T;\mathbb{R}^N)}\left( \frac{1}{2} \|  \mathbf{u} -\hat{\mathbf{u}}\|^2_{ L^2(t_0,t_0+T;\mathbb{R}^N)} +\mathcal{G}(\mathbf{u}) \right).
\end{equation*}
Well-posedness of $\prox_{\mathcal{G} }$ is  justified by the fact that  $\mathcal{G}$ is proper, convex, and weakly lower semi-continuous. We have the following proposition which expresses the first-order optimality conditions in terms of the proximal operator. This optimality condition suggests the termination condition for the proximal point algorithm that we will use later.
\begin{proposition} Let a triple  $(t_0,T,y_0) \in \mathbb{R}_+^2 \times H$ and  $\bar{\alpha} >0$ be given. Then  $\mathbf{u}^* \in L^2(t_0,t_0+T;\mathbb{R}^N)$ is the unique minimizer to \ref{e48} iff there exists a solution $p(y^*(\mathbf{u}^*)) \in W(t_0,t_0+T;V,V')$ to \eqref{e52} such that  the following equality holds
\begin{equation}
\label{e54}
\mathbf{u}^* = \prox_{\bar{\alpha}\mathcal{G}}(\mathbf{u}^* -\bar{\alpha} B_{\mathcal{U}_{\omega}}^*p ),
\end{equation}
where $y^*(\mathbf{u}^*)$ is the solution to \eqref{e50}.
\end{proposition}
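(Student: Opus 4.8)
The plan is to read the proposition as the standard forward--backward (proximal--gradient) fixed-point characterization of a minimizer of a sum $\mathcal{F}+\mathcal{G}$, and to reduce the fixed-point equation \eqref{e54} to the first-order optimality inclusion \eqref{e55}, which has already been shown to characterize the unique minimizer. Since all objects live in the Hilbert space $L^2(t_0,t_0+T;\mathbb{R}^N)$ and $\mathcal{G}$ is proper, convex and weakly lower semi-continuous, the proximal map $\prox_{\bar{\alpha}\mathcal{G}}$ is single-valued there, so both sides of \eqref{e54} are well-defined; existence and uniqueness of $\mathbf{u}^*$ are already provided by Proposition \ref{prop1}.

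The key step is the resolvent (Moreau) identity for the proximal operator. For fixed $\hat{\mathbf{u}}$, the vector $\mathbf{v}:=\prox_{\bar{\alpha}\mathcal{G}}(\hat{\mathbf{u}})$ is the unique minimizer of the strongly convex functional $\mathbf{u}\mapsto \frac{1}{2}\|\mathbf{u}-\hat{\mathbf{u}}\|^2_{L^2(t_0,t_0+T;\mathbb{R}^N)}+\bar{\alpha}\,\mathcal{G}(\mathbf{u})$; writing out its optimality condition, and using that the quadratic term is smooth so that the subdifferential splits, gives the equivalence
\[
\mathbf{v}=\prox_{\bar{\alpha}\mathcal{G}}(\hat{\mathbf{u}}) \iff \hat{\mathbf{u}}-\mathbf{v}\in \bar{\alpha}\,\partial\mathcal{G}(\mathbf{v}).
\]
I would then apply this with $\mathbf{v}=\mathbf{u}^*$ and $\hat{\mathbf{u}}=\mathbf{u}^*-\bar{\alpha}\,B_{\mathcal{U}_{\omega}}^*p$. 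Dividing by $\bar{\alpha}>0$, the fixed-point equation \eqref{e54} collapses to a subdifferential inclusion for $B_{\mathcal{U}_{\omega}}^*p$ at $\mathbf{u}^*$; invoking the identity $\mathcal{F}'(\mathbf{u}^*)=-B_{\mathcal{U}_{\omega}}^*p$ established from the adjoint equation \eqref{e52}, this inclusion is exactly the optimality condition \eqref{e55}, equivalently \eqref{e51}. Reading the chain in both directions yields the claimed equivalence, and the solution $p=p(y^*(\mathbf{u}^*))$ of \eqref{e52} furnishing the multiplier is the one supplied by the well-posedness of the adjoint equation already noted after \eqref{e55}.

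There is no genuine analytic obstacle here: the statement is essentially a restatement of convex first-order optimality in resolvent form, and every ingredient has been prepared in the preceding discussion. The only points requiring a line of justification are the single-valuedness of $\prox_{\bar{\alpha}\mathcal{G}}$ on $L^2(t_0,t_0+T;\mathbb{R}^N)$ and the validity of the sum rule $\partial(\mathcal{F}+\mathcal{G})=\{\mathcal{F}'\}+\partial\mathcal{G}$ --- the latter guaranteed because $\mathcal{F}$ is $C^1$ and $\dom(\mathcal{G})$ is the whole space, as already used in \eqref{e51}. One should also keep careful track of the sign in the adjoint identification $\mathcal{F}'(\mathbf{u}^*)=-B_{\mathcal{U}_{\omega}}^*p$ when matching \eqref{e54} to \eqref{e55}, since this is the only place where a careless computation could introduce an error.
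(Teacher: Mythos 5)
Your overall strategy is the same as the paper's: the paper disposes of this proposition in one line by citing the forward--backward fixed-point characterization of minimizers of $\mathcal{F}+\mathcal{G}$ (Bauschke--Combettes, Corollary 26.3), which is exactly the resolvent identity
\begin{equation*}
\mathbf{v}=\prox_{\bar{\alpha}\mathcal{G}}(\hat{\mathbf{u}})\iff \hat{\mathbf{u}}-\mathbf{v}\in\bar{\alpha}\,\partial\mathcal{G}(\mathbf{v})
\end{equation*}
that you spell out, combined with Fermat's rule and the sum rule already recorded in \eqref{e51}. Your preparatory remarks (single-valuedness of the prox, validity of the sum rule because $\mathcal{F}$ is $C^1$ and $\dom(\mathcal{G})$ is the whole space, uniqueness from Proposition \ref{prop1}) are all correct.

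However, the one step you yourself flag as dangerous --- the sign matching --- does not close as written. Applying your resolvent identity with $\mathbf{v}=\mathbf{u}^*$ and $\hat{\mathbf{u}}=\mathbf{u}^*-\bar{\alpha}B_{\mathcal{U}_{\omega}}^*p$ gives
\begin{equation*}
\eqref{e54}\iff -\bar{\alpha}B_{\mathcal{U}_{\omega}}^*p\in\bar{\alpha}\,\partial\mathcal{G}(\mathbf{u}^*)\iff -B_{\mathcal{U}_{\omega}}^*p\in\partial\mathcal{G}(\mathbf{u}^*).
\end{equation*}
If you now insert the identity you quote, $\mathcal{F}'(\mathbf{u}^*)=-B_{\mathcal{U}_{\omega}}^*p$, this inclusion reads $\mathcal{F}'(\mathbf{u}^*)\in\partial\mathcal{G}(\mathbf{u}^*)$, which is \emph{not} first-order optimality: \eqref{e51} requires $-\mathcal{F}'(\mathbf{u}^*)\in\partial\mathcal{G}(\mathbf{u}^*)$, i.e.\ \eqref{e55}. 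So the chain you assert establishes the equivalence of \eqref{e54} with the wrong-sign inclusion, not with \eqref{e51}/\eqref{e55}; taking \eqref{e54} and the identity $\mathcal{F}'=-B_{\mathcal{U}_{\omega}}^*p$ both at face value, the claimed match is false.

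The resolution is that the sign sits in the adjoint identification itself. Differentiating $\mathcal{F}$ and pairing the adjoint equation \eqref{e52} (right-hand side $-\Delta y^*$, terminal condition zero) with the linearized state (zero initial condition) gives, after integration by parts in time and space,
\begin{equation*}
\mathcal{F}'(\mathbf{u}^*)h=\int_{t_0}^{t_0+T}\langle -\Delta y^*(t),z_h(t)\rangle_{V',V}\,dt=\int_{t_0}^{t_0+T}\bigl(B_{\mathcal{U}_{\omega}}^*p(t),h(t)\bigr)_{\mathbb{R}^N}\,dt,
\end{equation*}
i.e.\ $\mathcal{F}'(\mathbf{u}^*)=+B_{\mathcal{U}_{\omega}}^*p$. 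With this sign, \eqref{e54} is precisely the fixed-point form of $0\in\mathcal{F}'(\mathbf{u}^*)+\partial\mathcal{G}(\mathbf{u}^*)$ and the proposition follows; it is the in-text identity $\mathcal{F}'(\mathbf{u}^*)=-B_{\mathcal{U}_{\omega}}^*p$ and consequently \eqref{e55} that carry a sign slip (note that the proximal-gradient iteration in the numerical section, $\mathbf{u}^{j+1}=\prox_{\alpha_j\mathcal{G}}(\mathbf{u}^j-\alpha_jB_{\mathcal{U}_{\omega}}^*p^j)$, implicitly uses the positive sign). A blind proof has to either recompute this identification or detect the incompatibility; since your write-up does neither and instead asserts the false match, the final step is a genuine gap, even though everything preceding it is sound and methodologically identical to the paper.
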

\begin{proof}
One needs only to verify the equivalence  between the inequalities \eqref{e55} and \eqref{e54} which is done as in  \cite{MR2798533}[Corollary 26.3].
\end{proof}

\begin{remark} Note that if, in the definition of $\mathcal{G}$, the norm $|\cdot |_{*}$ is chosen to be the $\ell_2$-norm, then $\mathcal{G}$ is smooth and we have $\partial \mathcal{G}(\mathbf{u}) = \{ \mathcal{G}'(\mathbf{u})\}$ and, thus, the optimality conditions \eqref{e55} and \eqref{e54} can be rewritten as
\begin{equation*}
B_{\mathcal{U}_{\omega}}^*p = \beta \mathbf{u}  \quad  \text{  in } L^2(t_0,t_0+T; \mathbb{R}^N).
\end{equation*}
If, on the other hand,  $|\cdot |_{*}$ is chosen to be $\ell_1$-norm, then $\mathcal{G}$ is non-smooth and

\begin{equation*}
 \partial \mathcal{G}(\mathbf{u}) :=  \left\lbrace \mathbf{v} \in L^2(t_0,t_0+T;\mathbb{R}^N)  :  \mathbf{v}(t) \in  \partial ( \frac{\beta}{2}|\mathbf{u}(t)|^2_{1})  \text{ for a.e. } t\in (t_0,t_0+T)  \right\rbrace, 
\end{equation*}
see \cite{MR2798533}[Proposition 16.63].
\end{remark}
In the following, we give the pointwise characterization of  $\prox_{\bar{\alpha}\mathcal{G}}$ for the case $|\cdot|_* = | \cdot|_1$. This characterization is foundational for our optimization algorithm. Due to \cite{MR2798533}[Proposition 24.13] and by setting $g:=\frac{\beta}{2}| \cdot|^2_{1}$, the proximal operator $\prox_{\bar{\alpha}\mathcal{G}}$, can be expressed pointwise as
\begin{equation*}
\left[\prox_{\bar{\alpha}\mathcal{G}}(u)\right] (t) = \prox_{\bar{\alpha} g} (u(t))  \quad  \text{ for almost every } t\in (t_0 ,t_0+T ),
\end{equation*}
and thus  the first-order optimality conditions \eqref{e54} can be stated as
\begin{equation}
\label{e87}
\mathbf{u}^*(t) = \prox_{\bar{\alpha} g}(\mathbf{u}^*(t) -\bar{\alpha} B_{\mathcal{U}_{\omega}}^*p(t)) \quad  \text{ for almost every } t\in (t_0,t_0+T).
\end{equation}
Therefore, its remain only to compute the proximal operator of   $\bar{\alpha} g  : \mathbb{R}^N \to \mathbb{R}_+$.  By following the same argument as in \cite{MR3719240}[Lemma 6.70] and \cite{MR3380641}, it can be shown for every $ \mathbf{x}:=(x_1,\dots,x_N)^t \in \mathbb{R}^N$ that
\begin{equation}
\label{e85}
\prox_{\bar{\alpha} g }(\mathbf{x}) := \begin{cases} \left(\frac{\lambda_ix_i}{\lambda_i+\bar{\alpha} \beta} \right)^N_{i =1},  & \text{ if } \mathbf{x} \neq 0, \\
0, &  \text{ if } \mathbf{x} = 0,

\end{cases}
\end{equation}
where $\lambda_i := \left[ \frac{\sqrt{\frac{\bar{\alpha} \beta}{2}}|x_i| }{\sqrt{\mu^*}}-\bar{\alpha} \beta \right]_+$ with $\mu^*$ being any positive zero of the following one-dimensional nonicreasing function
\begin{equation}
\label{e95}
\psi(\mu): =   \sum^N_{i=1} \left[ \frac{\sqrt{\frac{\bar{\alpha} \beta}{2}}|x_i| }{\sqrt{\mu}}-\bar{\alpha} \beta \right]_+-1,
\end{equation}
where  $\left[ \cdot \right]_+:= \max(0, \cdot )$. In other words,  $\mu^*$  is chosen so that  $\sum^N_{i=1} \lambda_i = 1$.
\begin{remark}
Due to the  characterization \eqref{e85} of $\prox_{\bar{\alpha} g}$,  the cardinality of the set $D(\mathbf{x}) := \{ i\in \{ 1,\dots,N \}  :   \frac{\sqrt{\frac{\bar{\alpha} \beta}{2}}|x_i| }{\sqrt{\mu^*}}-\bar{\alpha} \beta > 0 \}$ is the number of nonzero components  of $\prox_{\bar{\alpha} g}(\mathbf{x})$.  Hence, due to \eqref{e87},   $|D^*(t)|: =  | D(\mathbf{u}^*(t) - B_{\mathcal{U}_{\omega}}^*p(t))| $ stands for the number of non-zero components of  $\mathbf{u}^*$ at time $t$.
\end{remark}

\section{Stabilizability}
In this section,  we summarize selected results on the stabilization of \eqref{e1} by finitely many controllers, in a framework  which is  convenient  for our further discussion. The importance of stabilization by control associated to finitely actuators has been studied in several papers, see e.g.,  \cite{MR2784695, MR2005128,   MR2974739, MR2104285,MR2629581}. Here we follow the same arguments as in \cite{MR3691212,MR3337988,7330942,Phan2018} which deal with time-varying controlled systems.  We will see that under suitable condition on the set of actuators $\mathcal{U}_{\omega}:=\{\Phi_i : i=1,\dots,N \}$, there exists a stabilizing  control $\hat{\mathbf{u}}=\hat{\mathbf{u}}(y_0)$ with $\hat{\mathbf{u}}(t)=(u_1(t),\dots,u_N(t))^t$ which steers the system \eqref{e1} to zero.

Let  $\mathcal{U}_{\omega} \subset H$ be a set of linearly independent functions, and denote by $\Pi_N : H \to \spn{(\mathcal{U}_{\omega})} \subset H$ the orthogonal projection onto $\spn{(\mathcal{U}_{\omega})}$ in  $H$.    We consider the exponential stabilizability of the controlled system
 \begin{equation}
 \label{e25}
 \begin{cases}
\partial_t y(t)-\nu\Delta y(t) + a(t)y(t)+ \nabla \cdot (b(t)y(t)) +\Pi_{N}\mathbf{q}(t)  = 0  &\text{ in } (t_0,\infty)\times\Omega,\\
y=0   & \text{ on } (t_0,\infty)\times \partial \Omega,\\
y(t_0)=y_0 &\text{ on } \Omega,
\end{cases}
\end{equation}
by the control $\mathbf{q} \in L^2(t_0,\infty;H)$. We can express the control term equivalently as
\begin{equation}
\label{e63}
- \Pi_{N} \mathbf{q}(t)  = \sum^N_{i =1} \hat{u}_i(t)\Phi_i,
\end{equation}
where
\begin{equation}
\label{e62}
\mathbf{\hat{u}}=(\hat{u}_1, \dots, \hat{u}_N)^t := -\mathcal{I}\Pi_{N}\mathbf{q},
\end{equation}
and   $\mathcal{I}: \spn{(\mathcal{U}_{\omega})} \to  \mathbb{R}^N$ denotes the canonical isomorphism.

 The following result provides a  sufficient condition on  $\mathcal{U}_{\omega}$ for the exponential stabilizability of \eqref{e25}.

 \begin{proposition}[Exponential stabilizability uniformly with respect to $(t_0,y_0)$ ]
\label{theo1}
Let $\lambda>0$ be given. Then there exists a constant $\Upsilon:=\Upsilon(\lambda,a,b)>0$ such that: if for $\mathcal{U}_{\omega}$ the following condition holds
\begin{equation}
\label{e21}
\tag{coac}
\| I-\Pi_{N}\|^2_{\mathcal{L}(H,V')}  < \Upsilon^{-1},
\end{equation}
then the control system \eqref{e25} is exponentially stabilizable. That is, for every $(t_0,y_0) \in \mathbb{R}_+ \times H$, there exists a control $\mathbf{q}(y_0,\lambda) \in L^2(t_0,\infty;H)$ such that
\begin{align}
\label{e27}
\|y(t)\|^2_H \leq  \Theta_1 e^{-\lambda(t-t_0)} \|y_0\|_H^2  &    \quad  \text{ for  } t\geq t_0,    \\ \label{e28}
\|e^{\bar{\lambda}(\cdot-t_0)}\mathbf{q}\|^2_{L^2(t_0,\infty;H)} \leq {\Theta_2} \|y_0\|^2_H &    \quad  \text{ for  } \bar{\lambda}  < \lambda,
\end{align}
where the constants $\Theta_1$ and $\Theta_2$ depend on $a$, $b$, $\mathcal{U}_{\omega}$ and $\nu$, but are independent  of $y_0$.
 \end{proposition}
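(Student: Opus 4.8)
The plan is to exhibit an explicit proportional feedback and to close the loop through a single weighted energy estimate in which the hypothesis \eqref{e21} is used exactly once, namely to absorb the obstruction coming from the uncontrolled part of the state. Concretely, I would set $\mathbf q(t):=\mu\,y(t)$ for a gain $\mu>0$ to be fixed, so that \eqref{e25} becomes the closed-loop equation $\partial_t y-\nu\Delta y+a y+\nabla\cdot(by)+\mu\Pi_N y=0$. To build the prescribed rate into the estimate from the outset, I would work with the weighted variable $z(t):=e^{\lambda(t-t_0)}y(t)$, which solves the same type of equation with the reaction term $a$ replaced by $a-\lambda$; it therefore suffices to bound $z$ in $L^\infty(t_0,\infty;H)\cap L^2(t_0,\infty;V)$ uniformly in $(t_0,y_0)$, and to read off both \eqref{e27} and \eqref{e28} from that bound.

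Testing the $z$-equation with $z$ gives, for a.e.\ $t$,
\begin{equation*}
\tfrac12\tfrac{d}{dt}\|z\|_H^2+\nu\|\nabla z\|_H^2=\lambda\|z\|_H^2-(a z,z)_H-(\nabla\cdot(bz),z)_H-\mu\|\Pi_N z\|_H^2 .
\end{equation*}
The reaction and convection terms are handled by the standard route already used for Proposition \ref{Theo2}: under \eqref{e56}, H\"older's inequality, the Sobolev embedding $V\hookrightarrow L^{2r'}(\Omega)$ with Gagliardo--Nirenberg interpolation, and Young's inequality give $|(az,z)_H|+|(\nabla\cdot(bz),z)_H|\le \tfrac{\nu}{4}\|\nabla z\|_H^2+C(N(a,b),\nu)\|z\|_H^2$, the first summand being absorbed into $\nu\|\nabla z\|_H^2$. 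The decisive term is $-\mu\|\Pi_N z\|_H^2$. Writing $\|\Pi_N z\|_H^2=\|z\|_H^2-((I-\Pi_N)z,z)_H$ and pairing $(I-\Pi_N)z\in H\subset V'$ against $z\in V$ gives
\begin{equation*}
-\mu\|\Pi_N z\|_H^2=-\mu\|z\|_H^2+\mu\langle (I-\Pi_N)z,z\rangle_{V',V}\le -\mu\|z\|_H^2+\mu\,\|I-\Pi_N\|_{\mathcal L(H,V')}\|z\|_H\|z\|_V ,
\end{equation*}
and a further Young step turns the last product into $\tfrac{\nu}{8}\|\nabla z\|_H^2+\tfrac{2\mu^2}{\nu}\|I-\Pi_N\|_{\mathcal L(H,V')}^2\|z\|_H^2$. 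This is the only place where the distance of $\Pi_N$ from the identity enters, and it is essential that it enters through the $\mathcal L(H,V')$-norm, so that the accompanying factor $\|z\|_V$ can be soaked up by the viscous dissipation.

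Collecting the estimates leaves
\begin{equation*}
\tfrac12\tfrac{d}{dt}\|z\|_H^2+\tfrac{5\nu}{8}\|\nabla z\|_H^2\le\Big(\lambda+C(N(a,b),\nu)-\mu+\tfrac{2\mu^2}{\nu}\|I-\Pi_N\|_{\mathcal L(H,V')}^2\Big)\|z\|_H^2 .
\end{equation*}
Viewing the bracket as a quadratic in $\mu$ and completing the square, its minimum over $\mu$ is nonpositive precisely when $\|I-\Pi_N\|_{\mathcal L(H,V')}^2\le \nu/\big(8(\lambda+C(N(a,b),\nu))\big)$; this identifies $\Upsilon:=8(\lambda+C(N(a,b),\nu))/\nu=\Upsilon(\lambda,a,b)$, and under \eqref{e21} the choice $\mu:=\nu/\big(4\|I-\Pi_N\|_{\mathcal L(H,V')}^2\big)$ renders the bracket nonpositive. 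Then $\tfrac{d}{dt}\|z\|_H^2\le 0$, and integration gives $\|z(t)\|_H^2\le\|y_0\|_H^2$ together with $\int_{t_0}^\infty\|\nabla z\|_H^2\,dt\le C\|y_0\|_H^2$, both uniformly in $(t_0,y_0)$. Unwinding $z=e^{\lambda(\cdot-t_0)}y$ yields $\|y(t)\|_H^2\le e^{-2\lambda(t-t_0)}\|y_0\|_H^2$, which implies \eqref{e27}; and since $\mathbf q=\mu y$, for any $\bar\lambda<\lambda$ one has $\|e^{\bar\lambda(\cdot-t_0)}\mathbf q\|_{L^2(t_0,\infty;H)}^2=\mu^2\int_{t_0}^\infty e^{2(\bar\lambda-\lambda)(t-t_0)}\|z\|_H^2\,dt\le \mu^2 C\|y_0\|_H^2$ by Poincar\'e, which is \eqref{e28}. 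Existence of the closed-loop solution on the unbounded interval I would obtain by solving on finite horizons via Proposition \ref{Theo2} and passing to the limit using exactly the above $T$-independent weighted bound. The main obstacle is the bookkeeping in the middle step: one must route the projection defect through the $V'$-duality (not the $H$-inner product, where $\|I-\Pi_N\|$ is not controlled) and then balance the gain $\mu$ against $\|I-\Pi_N\|_{\mathcal L(H,V')}^2$ so that the square completes with a threshold depending only on $(\lambda,a,b,\nu)$.
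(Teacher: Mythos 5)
Your proposal is correct and takes essentially the same approach as the paper's proof: the paper itself does not write the argument out but defers to \cite{MR3691212} and \cite{Phan2018}, and the technique there is exactly what you reconstruct --- an explicit proportional feedback $\mathbf{q}=\mu y$ (so the control term is $\mu\Pi_N y$), a weighted energy estimate for $z=e^{\lambda(\cdot-t_0)}y$, and absorption of the projection defect $\mu\langle (I-\Pi_N)z,z\rangle_{V',V}\leq \mu\|I-\Pi_N\|_{\mathcal{L}(H,V')}\|z\|_H\|z\|_V$ into the viscous dissipation, which is precisely what forces the smallness condition \eqref{e21}. Your bookkeeping also checks out: the $5\nu/8$ dissipation split, the optimization over the gain giving $\Upsilon=8\bigl(\lambda+C(N(a,b),\nu)\bigr)/\nu$ and $\mu=\nu/\bigl(4\|I-\Pi_N\|^2_{\mathcal{L}(H,V')}\bigr)$, the resulting decay rate $2\lambda$ in \eqref{e27}, and the Poincar\'e route for \eqref{e28}, which keeps $\Theta_2$ independent of $\bar\lambda$ as the statement requires.
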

The proof can be carried by similar arguments as in \cite{MR3691212}[Theorem 2.10] and \cite{Phan2018}[Introduction].

We observe that if \eqref{e21} holds, then the infinite horizon problem \ref{opinf}  is well-posed.
In this case   the control given in \eqref{e62} satisfies  $\hat{\mathbf{u}} \in L^2(0,\infty;\mathbb{R}^N)$  and  due to  \eqref{e27} and \eqref{e28},
 the infinite horizon performance index function  $J^p_{\infty}(\hat{\mathbf{u}},0, y_0)$ is bounded.  Well-posedness of \ref{opinf} then follows  by the direct method of calculus of variations.

Let us also briefly recall  situations for which  condition \eqref{e21} is satisfied. One such case relates to the choice of  $\{ \Phi \}^N_{i=1}$ as the  eigenfunctions of the negative Laplacian $-\Delta$ with homogeneous Dirichlet boundary conditions, see \cite{MR3691212,MR3337988,Phan2018} for more details. In practice we may be more interested in the choice of actuators given by indicators functions of subsets of  $\omega \subset \Omega$.  To describe one such situation we choose $\omega$ as an open rectangle of the form
\begin{equation}
\label{e100}
\omega_{rect} :=  \prod^n_{i = 1} (l_i,u_i) \subset \Omega.
\end{equation}
We consider the uniform partitioning of $\omega_{rect}$ to a family of sub-rectangles. For every $i=1,\dots,n$ the interval $(l_i,u_i)$ is divided into $d_i$ intervals defined by $I_{i,k}=(l_i+k_i\frac{u_i-l_i}{d_i}, l_i+(k_i+1)\frac{u_i-l_i}{d_i})$ with $k_i\in \{0,1,\dots,d_i-1\}$. Consequently   $\omega_{rect}$ is divided into  $N:=\prod^n_{i =1} d_i$ sub-rectangles defined by
\begin{equation}
\label{e92}
 \{R_i : i \in \{ 1,\dots,N\} \}:=\{ \prod^n_{i =1}I_{i,k_i} : k_i \in \{ 0,1,\dots, d_i  -1\} \},
\end{equation}
and  the set of actuators is defined by
\begin{equation}
\label{e93}
\mathcal{U}_{\omega_{rect}}: = \left\{ \mathbf{1}_{R_i}  : i=1,\dots,N \right\}
\end{equation}
 where  $\mathbf{1}_{R_i}$ is the indicator function of ${R_i}$.
 For this choice of actuators it was shown in \cite{MR3691212}[Example 2.12]  and \cite{7330942}[Section IV]  that \eqref{e21} is satisfied provided that  $N \geq \left( \frac{\bar{I}^2}{\pi^2}\Upsilon \right)^{\frac{n}{2}}$, where  $\bar{I}:= \max_{1\leq i \leq N} (u_i-l_i)$. This relation gives us a lower bound on the number of actuators for which the exponential stabilizabillty is obtained. This bound is defined with respect to the chosen $\lambda>0$, $a$, $b$, $\nu$,  and set of actuators defined by \eqref{e92} and \eqref{e93}, where this dependence is expressed in terms of the value of  $\Upsilon(\lambda,a,b, \mathcal{U}_{\omega_{rect}})$ and $\bar{I}$.

\section{Main Results}
In this section we investigate the  exponential stability of RHC computed by Algorithm \ref{RHA}. First we verify the properties \ref{P2} and \ref{P3} for the value function $V_T: \mathbb{R}_+ \times H \to \mathbb{R}_+$ defined by minimizing the performance index \eqref{e49} subject  to equation \eqref{e50}. Throughout, we use the notation $C_{\mathcal{U}_{\omega}}:=N\max_{ 1 \leq i  \leq N} \| \Phi_i\|^2_H$ for the the set of actuators  $\mathcal{U}_{\omega}:=\{\Phi_i : i=1,\dots,N \}$.
\begin{proposition}
\label{prop2p3}
Let $T\in(0,\infty)$ be given.  Then,  there exists a constant $\gamma_1(T)>0$ depending on $T$ such that \eqref{e8} holds for $V_T$ corresponding to \ref{e48}.  Moreover assume, in addition, that for chosen set of actuators $\mathcal{U}_{\omega} \subset H$ and  $\lambda>0$, condition \eqref{e21} holds with a real number $\Upsilon>0$. Then there exists a nondecreasing, continuous,  and bounded function $\gamma_2: \mathbb{R}_+ \to   \mathbb{R}_+$  such that \eqref{e7} holds for $V_T$ of \ref{e48} with the set of actuators $\mathcal{U}_{\omega}$.  Thus \ref{P2} and \ref{P3} hold.
\end{proposition}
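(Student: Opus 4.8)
The plan is to prove the two estimates separately: the lower bound \ref{P3} will come from the observability inequality \eqref{e14}, and the upper bound \ref{P2} from the stabilizability result Proposition \ref{theo1}.

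For \ref{P3}, I would apply \eqref{e14} with forcing $f=B_{\mathcal{U}_\omega}\mathbf{u}$ to the optimal pair $(y^*_T,\mathbf{u}^*_T)$ of \ref{e48}. Since $H$ embeds continuously into $V'$ with some constant $c_{\mathrm{e}}$ and the actuators are fixed, one estimates $\|B_{\mathcal{U}_\omega}\mathbf{u}(t)\|_{V'}^2\le c_{\mathrm{e}}^2 C_{\mathcal{U}_\omega}|\mathbf{u}(t)|_2^2\le c_{\mathrm{e}}^2 C_{\mathcal{U}_\omega}|\mathbf{u}(t)|_*^2$, using $|\cdot|_2\le|\cdot|_*$ for either admissible norm. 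Inserting this into \eqref{e14} gives
\[
\|y_0\|_H^2 \le \hat{c}_\nu\Big(1+\tfrac1T+N(a,b)\Big)\|y^*_T\|_{L^2(t_0,t_0+T;V)}^2 + c_{\mathrm{e}}^2 C_{\mathcal{U}_\omega}\int_{t_0}^{t_0+T}|\mathbf{u}^*_T(t)|_*^2\,dt .
\]
The two integrals on the right are, up to the factors $2$ and $2/\beta$, exactly the two contributions to $V_T(t_0,y_0)=J^p_T(\mathbf{u}^*_T;t_0,y_0)$, so one concludes $\|y_0\|_H^2\le \max\{2\hat{c}_\nu(1+\tfrac1T+N(a,b)),\,2c_{\mathrm{e}}^2C_{\mathcal{U}_\omega}/\beta\}\,V_T(t_0,y_0)$. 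Hence \eqref{e8} holds with $\gamma_1(T)$ the reciprocal of this maximum; it is positive, independent of $(t_0,y_0)$, and tends to $0$ monotonically as $T\to0$, in agreement with the earlier remark.

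For \ref{P2}, the idea is to use the exponentially stabilizing control as a feasible competitor. Under \eqref{e21}, Proposition \ref{theo1} supplies for each $(t_0,y_0)$ a control $\mathbf{q}\in L^2(t_0,\infty;H)$ satisfying the decay \eqref{e27} and the weighted bound \eqref{e28}, and $\hat{\mathbf{u}}=-\mathcal{I}\Pi_N\mathbf{q}$ as in \eqref{e62} is admissible for \ref{e48}. Since the integrand of $J^p_T$ is nonnegative, restricting to $[t_0,t_0+T]$ yields $V_T(t_0,y_0)\le J^p_T(\hat{\mathbf{u}};t_0,y_0)\le J^p_\infty(\hat{\mathbf{u}};t_0,y_0)$, so it suffices to bound the infinite-horizon cost by $\|y_0\|_H^2$ uniformly in $(t_0,T)$. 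The control cost is handled directly: as $\Pi_N$ is an $H$-contraction and the $\Phi_i$ are linearly independent, $|\hat{\mathbf{u}}(t)|_*^2\le N|\hat{\mathbf{u}}(t)|_2^2\le (N/\lambda_{\min})\|\mathbf{q}(t)\|_H^2$, where $\lambda_{\min}>0$ is the least eigenvalue of the Gram matrix of $\mathcal{U}_\omega$, and $\int_{t_0}^\infty\|\mathbf{q}\|_H^2\le\Theta_2\|y_0\|_H^2$ by \eqref{e28}.

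The main obstacle is the tracking term $\int_{t_0}^\infty\|y\|_V^2$: the naive a priori bound \eqref{e29} carries a constant $c_1(T)$ that may grow exponentially in $T$ and would render $\gamma_2$ unbounded. To avoid this I would derive a uniform energy estimate. Testing the state equation against $y$ and integrating over $[t_0,\infty)$, using $y(t)\to0$ in $H$ from \eqref{e27}, gives
\[
\nu\int_{t_0}^\infty\|y(t)\|_V^2\,dt = \tfrac12\|y_0\|_H^2 + \int_{t_0}^\infty\big(-(a y,y)_H + (b y,\nabla y)_H + \langle B_{\mathcal{U}_\omega}\hat{\mathbf{u}},y\rangle_{V',V}\big)\,dt .
\]
The reaction and convection terms are estimated by the Sobolev and interpolation inequalities already used in the proof of Proposition \ref{Theo4}, and the forcing term by $\|B_{\mathcal{U}_\omega}\hat{\mathbf{u}}\|_{V'}\|y\|_V$; Young's inequality absorbs a small multiple of $\|y\|_V^2$ into the left-hand side, leaving terms controlled by $\int_{t_0}^\infty\|y\|_H^2$ and $\int_{t_0}^\infty\|\mathbf{q}\|_H^2$. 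The former is finite via \eqref{e27} (bounded by $(\Theta_1/\lambda)\|y_0\|_H^2$) and the latter via \eqref{e28}, so $\int_{t_0}^\infty\|y\|_V^2\le C\|y_0\|_H^2$ with $C$ independent of $(t_0,T)$. Combining with the control-cost bound yields $J^p_\infty(\hat{\mathbf{u}};t_0,y_0)\le\gamma_2\|y_0\|_H^2$ for a constant $\gamma_2$; as a constant function it is trivially continuous, nondecreasing, and bounded, which establishes \eqref{e7} and completes the verification of \ref{P2} and \ref{P3}.
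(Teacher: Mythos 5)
Your proposal is correct and takes essentially the same route as the paper: \ref{P3} is obtained exactly as in the paper's proof by applying the observability inequality \eqref{e14} with $f=B_{\mathcal{U}_{\omega}}\mathbf{u}$ and absorbing the control term via $|\mathbf{u}|_2\le|\mathbf{u}|_*$, while \ref{P2} uses the stabilizing control of Proposition \ref{theo1} as a feasible competitor together with the same energy estimate (testing the state equation with $y$, Young's inequality, and the bounds \eqref{e27}--\eqref{e28}). The only cosmetic difference is that you integrate the energy identity over $[t_0,\infty)$ and obtain a constant $\gamma_2$, whereas the paper integrates over $[t_0,t_0+T]$ and retains the sharper $T$-dependent factor $(1-e^{-\lambda T})$; both versions are admissible since a constant function is trivially nondecreasing, continuous, and bounded.
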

\begin{proof}
Let $(t_0,y_0) \in \mathbb{R}_+ \times H$ be arbitrary. We consider both cases  $|\cdot |_{*}=|\cdot |_{1}$ and $|\cdot |_{*}=|\cdot |_{2}$ simultaneously.   First we deal with \eqref{e8}.
 For any  $\mathbf{u} \in L^2(t_0,t_0+T; \mathbb{R}^N)$, we obtain by \eqref{e14} for \eqref{e50} that
\begin{equation}
\label{e58}
\begin{split}
\|y_0\|^2_H  &\leq  \hat{c}_{\nu}\left(1 + \frac{1}{T} + N(a,b)\right) \int^{t_0+T}_{t_0}\|y(t)\|^2_{V}dt+ \int^{t_0+T}_{t_0}\|B_{\mathcal{U}_{\omega}}\mathbf{u}(t)\|^2_{V'}dt.
\end{split}
\end{equation}
Moreover, with $i_{H,V'}$ the embedding constant from $H$ into $V'$ we estimate
\begin{equation}
\begin{split}
\label{e59}
 \int^{t_0+T}_{t_0}\|B_{\mathcal{U}_{\omega}}\mathbf{u}(t)\|^2_{V'}dt &\leq  i_{H,V'} \int^{t_0+T}_{t_0}\|B_{\mathcal{U}_{\omega}}\mathbf{u}(t)\|^2_{H}dt \leq i_{H,V'}C_{\mathcal{U}_{\omega}} \int^{t_0+T}_{t_0}|\mathbf{u}(t)|^2_{2}\\  &\leq i_{H,V'}C_{\mathcal{U}_{\omega}} \int^{t_0+T}_{t_0}|\mathbf{u}(t)|^2_{1}.
\end{split}
\end{equation}
Defining
\begin{equation}
\label{e88}
\begin{split}
\gamma_1(T) &:= \left( \max\left\{ 2\hat{c}_{\nu}\left(1+\frac{1}{T}+N(a,b)\right),  \frac{2}{\beta}( i_{H,V'}C_{\mathcal{U}_{\omega}}) \right \} \right)^{-1} \\&= \min\left\{  \frac{T}{2\hat{c}_{\nu}(T+1+TN(a,b))}, \frac{\beta}{2 i_{H,V'}C_{\mathcal{U}_{\omega}}} \right \},
\end{split}
\end{equation}
we obtain with  \eqref{e58} and  \eqref{e59} that
\begin{equation*}
\begin{split}
\gamma_1(T)\|y_0\|^2_H   &\leq  \int^{t_0+T}_{t_0} \left(\frac{1}{2}\|y(t)\|^2_{V}+ \frac{\beta}{2}|\mathbf{u}(t)|^2_{*}\right)dt= J^p_T(\mathbf{u};t_0,y_0)
\end{split},
\end{equation*}
for arbitrary $\mathbf{u} \in L^2(t_0,t_0+T; \mathbb{R}^N)$, and  \ref{P3} follows.

Now we turn to the verification of \ref{P2}. Due to Proposition \ref{theo1} applied to \eqref{e25}  there exists a control $\mathbf{q}(y_0,\lambda) \in L^2(t_0,\infty;H)$ such that
\begin{equation}
\label{e60}
\|e^{\frac{\lambda}{2}(\cdot-t_0)}\mathbf{q}\|^2_{L^2(t_0,\infty;H)} \leq {\Theta_2} \|y_0\|^2_H,
\end{equation}
and for it corresponding state,  we have
\begin{equation}
\label{e61}
\|y(t)\|^2_H \leq  \Theta_1 e^{-\lambda(t-t_0)} \|y_0\|_H^2      \quad  \text{ for all } t\geq t_0,
\end{equation}
where the constants $\Theta_1$ and $\Theta_2$ have been defined in Proposition \ref{theo1}. By \eqref{e60} we obtain that
\begin{equation}
\label{e65}
\|\mathbf{q}\|^2_{L^2(t_0,t_0+T;H)} \leq \frac{\Theta_2 }{\lambda}(1-e^{-\lambda T})  \|y_0\|^2_H.
\end{equation}
Defining $\hat{\mathbf{u}}(t)= (\hat{u}_1(t), \dots, \hat{u}_N(t))^t$ and as in \eqref{e62} for $\mathcal{U}_{\omega}$ and using \eqref{e65} , we find
\begin{equation}
\begin{split}
\label{e64}
&\int^{t_0+T}_{t_0} |\hat{\mathbf{u}}(t)|^2_1dt \leq  N\int^{t_0+T}_{t_0}| \hat{\mathbf{u}}(t)|^2_2dt \leq  N\int^{t_0+T}_{t_0}  |\mathcal{I}\Pi_{N} \mathbf{q}(t)|^2_2dt\\
& \leq   N\int^{t_0+T}_{t_0}  \|\mathcal{I}\Pi_{N}\|^2_{\mathcal{L}(H,\mathbb{R}^N)}\| \mathbf{q}(t)\|^2_Hdt \leq Nc_4\int^{t_0+T}_{t_0}\| \mathbf{q}(t)\|^2_H dt \leq   \frac{ N c_4\Theta_2 }{\lambda}(1-e^{-\lambda T}) \|y_0\|^2_H,
\end{split}
\end{equation}
where $c_4$ depends only on $\Omega$ and $\mathcal{U}_{\omega}$. Moreover, by setting $ \mathbf{u} = \hat{\mathbf{u}}$  in equation \eqref{e50}, \eqref{e61} holds. Now using a similar estimate as in \eqref{e30} for \eqref{e50} with $\hat{\mathbf{u}}$ in place of  $ \mathbf{u}$,  we obtain for almost every $t \in (t_0, t_0+T)$ that
\begin{equation}\label{e66}
\begin{split}
\frac{d}{2dt}&\|y(t)\|^2_H + \nu \|y(t)\|^2_V  \\
&\leq   cN(a,b)\|y(t) \|_H\| y(t)\|_V  +  \|B_{\mathcal{U}_{\omega}}\hat{\mathbf{u}}(t)\|_{H}\|y(t)\|_{H}\leq  \frac{1}{2} c_5 \|y(t)\|^2_H + \frac{\nu}{2}\|y(t)\|^2_V  + \frac{1}{2} | \hat{\mathbf{u}}(t)|^2_{2},
\end{split}
\end{equation}
where
$c_5:=\left(\frac{c^2}{\nu}N^2(a,b)+C_{\mathcal{U}_{\omega}}\right).$
Integrating \eqref{e66} over $(t_0,t_0+T)$ and using \eqref{e61} and \eqref{e64}, we obtain
\begin{equation}
\begin{split}
\label{e67}
\int^{t_0+T}_{t_0}& \|y(t)\|^2_V\,dt \leq \frac{1}{\nu}\left(\|y(t_0)\|^2_H +  c_5\int^{t_0+T}_{t_0} \|y(t)\|^2_H\,dt + \int^{t_0+T}_{t_0}| \hat{\mathbf{u}}(t)|^2_{2}\,dt \right)\\
&\leq \frac{1}{\nu}\left(1 +\frac{c_5\Theta_1+c_4\Theta_2  }{\lambda}(1-e^{-\lambda T})  \right)\|y_0\|^2_H.
\end{split}
\end{equation}
Now we can show \eqref{e7}.  Due to the  optimality of  $V_T$ and using \eqref{e64} and \eqref{e67}, we have for the case $|\cdot|_1$ that
\begin{equation*}
\begin{split}
V_T(t_0,y_0) &\leq J^p_T(\hat{\mathbf{u}};t_0,y_0)=\int^{t_0+T}_{t_0}(\frac{1}{2}\| y(t)\|^2_{V}+\frac{\beta}{2} |\hat{\mathbf{u}}(t)|^2_{1})dt \\
&\leq  \frac{1}{2\nu}\left(1 +\frac{c_5\Theta_1+(1+\nu N\beta)c_4\Theta_2  }{\lambda}(1-e^{-\lambda T})\right) \|y_0\|^2_H =: \gamma^{\ell_1}_2(T) \|y_0\|^2_H,
\end{split}
\end{equation*}
and, thus, \eqref{e7} holds for the choice  $\gamma_2(T) =\gamma^{\ell_1}_2(T)$.
In a similar manner it can be shown that \eqref{e7} holds for the case  $|\cdot|_{*}= |\cdot|_2$ with the choice of $\gamma_2(T) =\gamma^{\ell_2}_2(T) $ defined by
\begin{equation*}
\gamma^{\ell_2}_2(T) := \frac{1}{2\nu}\left(1 +\frac{c_5\Theta_1+(1+\nu\beta)c_4\Theta_2  }{\lambda}(1-e^{-\lambda T}) \right),
\end{equation*}
and thus we are finished with the verification of \eqref{e7}.
\end{proof}
In the next theorem, we prove the exponential stability of RHC obtained by Algorithm \ref{RHA}. Moreover, it will be shown that, for more regular data, we obtain a stronger stability result.
\begin{theorem}
\label{Theo5}
Assume that for given $\mathcal{U}_{\omega}\subset H$ and $\lambda>0$,  condition \eqref{e21} is satisfied with a real number $\Upsilon>0$.   Then, for given $\delta$,  there exist numbers $T^*(\delta,\mathcal{U}_{\omega})>\delta$ and $\alpha(\delta,\mathcal{U}_{\omega})$ such that for every prediction horizon $T\geq T^*$, the receding horizon control $\mathbf{u}_{rh} \in L^2(0,\infty;\mathbb{R}^N)$ obtained by Algorithm \ref{RHA} is globally \textbf{suboptimal}  and \textbf{exponentially stable}, i.e.
inequalities \eqref{ed27} and   \eqref{ed28} hold for every $y_0\in H$. If additionally \eqref{e36} holds,  then we obtain that
\begin{equation}
\label{e68}
\|y_{rh}(t)\|^2_{V} \leq c_Ve^{-\zeta t}\|y_0\|^2_{V} \quad  \text{ for }  t\geq 0 ,
\end{equation}
for every  $y_0 \in V$,   where  $\zeta$ has been  defined  in Theorem \ref{subopth}  and  $c_V$  depends on $\alpha(\delta,\mathcal{U}_{\omega})$, $\delta$, and $T$, but is independent of $y_0$.
\end{theorem}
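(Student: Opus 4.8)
The first assertion is immediate from the abstract theory of Section 2. Proposition \ref{prop1} verifies \ref{P1}, and Proposition \ref{prop2p3} verifies \ref{P2} and \ref{P3} under the standing assumption \eqref{e21}; hence Theorem \ref{subopth} applies verbatim and yields the suboptimality estimate \eqref{ed27} together with the $H$-exponential decay \eqref{ed28}. So the only genuinely new work concerns the upgrade to the $V$-norm under the extra regularity \eqref{e36}.

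The plan is to combine the smoothing estimate of Lemma \ref{lem4} with a \emph{geometric} decay of the running cost along the closed loop. First I would extract the latter from the proof of Theorem \ref{subopth}: writing $v_k := V_T(t_k, y_{rh}(t_k))$, Proposition \ref{pro1} gives $v_{k+1} \le v_k - \alpha \int_{t_k}^{t_{k+1}} \ell_T^*(t;t_k,y_{rh}(t_k))\,dt$, while \eqref{e24} (applied with $t_0 = t_k$) yields $\int_{t_k}^{t_{k+1}} \ell_T^* \ge (1+\theta_2)^{-1} v_k$. Combining the two produces $v_{k+1}\le \mu\, v_k$ with $\mu := 1 - \alpha/(1+\theta_2) \in (0,1)$, so that $v_k \le \mu^k v_0 \le \mu^k \gamma_2(T)\|y_0\|_H^2$. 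Since on each sampling window the closed-loop pair $(y_{rh},\mathbf{u}_{rh})$ coincides with the optimal pair of the $k$-th subproblem, and since $\int_{t_k}^{t_{k+1}}\ell_T^*\le v_k$, this gives the key bound
\[
\int_{t_k}^{t_{k+1}}\Big(\|y_{rh}(t)\|_V^2 + \beta\,|\mathbf{u}_{rh}(t)|_*^2\Big)\,dt \;\le\; 2\mu^k \gamma_2(T)\,\|y_0\|_H^2,
\]
i.e. the running cost decays geometrically at the sampling instants, with a rate that I would identify with the rate $\zeta$ of \eqref{ed28}.

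Next, fix a lookback length $\tau > 0$ (a fixed multiple of $\delta$) and let $t \ge \tau$. On the window $[t-\tau,t]$ the state $y_{rh}$ solves \eqref{e50} with forcing $f = B_{\mathcal{U}_{\omega}}\mathbf{u}_{rh}$, so Lemma \ref{lem4} gives $\|y_{rh}(t)\|_V^2 \le c_3(\tau)\big(\|y_{rh}(t-\tau)\|_H^2 + \|B_{\mathcal{U}_{\omega}}\mathbf{u}_{rh}\|_{L^2(t-\tau,t;H)}^2\big)$. The first term decays like $e^{-\zeta(t-\tau)}$ by \eqref{ed28}. For the second, I use $\|B_{\mathcal{U}_{\omega}}\mathbf{u}_{rh}(s)\|_H^2 \le C_{\mathcal{U}_{\omega}}|\mathbf{u}_{rh}(s)|_2^2 \le C_{\mathcal{U}_{\omega}}|\mathbf{u}_{rh}(s)|_*^2$ and cover $[t-\tau,t]$ by the finitely many sampling windows it meets; summing the geometric bound above over those indices controls $\int_{t-\tau}^{t}|\mathbf{u}_{rh}|_*^2$ by a constant times $\mu^{\lfloor (t-\tau)/\delta\rfloor}\|y_0\|_H^2$, hence by $C e^{-\zeta t}\|y_0\|_H^2$. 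Combining and absorbing the factor $e^{\zeta\tau}$ together with the embedding $\|y_0\|_H \le c\|y_0\|_V$ gives \eqref{e68} for $t \ge \tau$. For the initial slab $t \in [0,\tau]$ I would instead invoke the strong-solution estimate \eqref{e43} of Proposition \ref{Theo4} (available because $y_0 \in V$ and $f = B_{\mathcal{U}_{\omega}}\mathbf{u}_{rh} \in L^2(0,\tau;H)$) to bound $\|y_{rh}(t)\|_V^2$ by $\|y_0\|_V^2$ plus the control cost; since $e^{-\zeta t}$ is bounded below on $[0,\tau]$, this is absorbed into $c_V$.

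The main obstacle is the step in the second paragraph: turning the mere summability of the running cost guaranteed by \eqref{ed27} into genuine geometric decay with a rate matching $\zeta$, which requires re-opening the value-function recursion behind Theorem \ref{subopth} rather than using its statement as a black box. Once that decay is in hand, the smoothing step is essentially routine, the only care being the bookkeeping of how many sampling windows meet $[t-\tau,t]$ (a number bounded uniformly in $t$) and the choice of $\tau$ so that $c_3(\tau)$ is a fixed constant.
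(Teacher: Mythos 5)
Your first paragraph is exactly the paper's argument, and the overall architecture of your second part (smoothing via Lemma \ref{lem4}, geometric decay of the control cost over sampling windows, the strong-solution estimate \eqref{e43} of Proposition \ref{Theo4} for times between sampling instants) also matches the paper's proof. The genuine gap is the step you yourself flag as the ``main obstacle'': the identification of your decay rate with the $\zeta$ of \eqref{ed28}. Your recursion combines Proposition \ref{pro1} with \eqref{e24} to get $v_{k+1}\le \mu v_k$ where $\mu = 1-\alpha/(1+\theta_2) = 1 - \alpha\,\alpha_{\ell}\delta/(\alpha_{\ell}\delta+\gamma_2(T))$. But $\zeta$ in Theorem \ref{subopth} is defined through the different contraction factor $\eta = 1-\alpha\gamma_1(\delta)/\gamma_2(T)$ (see \eqref{ed31}--\eqref{ed313}), via $e^{-\zeta\delta}=\eta$. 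The two factors involve unrelated constants: $\alpha_{\ell}$ comes from the Poincar\'e-type coercivity \eqref{estiob} of $\ell$, while $\gamma_1(\delta)$ comes from the observability bound \eqref{e88}; neither dominates the other in general. If $\mu>\eta$, your bound $v_k\le\mu^k v_0$ is strictly weaker than $v_k\le \eta^k v_0$ for large $k$, so you prove exponential $V$-decay at \emph{some} rate, but not at the rate $\zeta$ asserted in the statement (and your covering argument then also only yields $\mu$-type decay of $\int_{t-\tau}^{t}|\mathbf{u}_{rh}|^2_*\,dt$, not $Ce^{-\zeta t}\|y_0\|_H^2$).

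The fix is simpler than re-opening the value-function recursion, and it is what the paper does: no recursion is needed at all. Since on $[t_k,t_{k+1}]$ the closed-loop pair is optimal for $OP^p_T(t_k,y_{rh}(t_k))$, Property \ref{P2} gives directly
\begin{equation*}
\frac{\beta}{2}\int_{t_k}^{t_{k+1}}|\mathbf{u}_{rh}(t)|^2_*\,dt \;\le\; V_T(t_k,y_{rh}(t_k)) \;\le\; \gamma_2(T)\,\|y_{rh}(t_k)\|^2_H ,
\end{equation*}
and the right-hand side decays at exactly the rate $\zeta$ by the $H$-estimate \eqref{ed28} evaluated at $t=t_k$ (the paper uses the slightly sharper intermediate bound \eqref{ed32}, but the black-box statement \eqref{ed28} suffices). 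So the geometric decay of the running cost at rate $\zeta$ follows from \ref{P2} plus the already-proved $H$-stability, with no need to match two independently derived contraction factors. Substituting this bound for your recursion, the rest of your argument (lookback window, Lemma \ref{lem4}, the initial slab handled by \eqref{e43}, absorption of $e^{\zeta\tau}$ and of the embedding constant $i_{V,H}$ into $c_V$) goes through and reproduces the paper's conclusion.
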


\begin{proof}
Clearly, we need only to verify the assumptions of Theorem \ref{subopth}. Well-posedness,  and  justification of  estimate \eqref{Est1} for equation  \eqref{e50} follows from Proposition \ref{Theo2} and estimate \eqref{e29}. Further, due to the Poincar\'e inequality, condition \eqref{estiob} holds for the incremental function defined by
\begin{equation}
\label{e74}
\ell(t,y,\mathbf{u}):= \frac{1}{2}\|\nabla y\|^2_{H}+\frac{\beta}{2} |\mathbf{u}|^2_{*}.
\end{equation}
Moreover, due to Propositions \ref{prop1} and \ref{prop2p3}, Properties \ref{P1}, \ref{P2}, and \ref{P3} hold and we are in the position that we can apply Theorem \ref{subopth}. Hence, we can conclude there exist numbers $T^*(\delta,\mathcal{U}_{\omega})>\delta$ and $\alpha(\delta,\mathcal{U}_{\omega})$ such that for every prediction horizon $T\geq T^*$, RHC obtained by Algorithm \ref{RHA} is suboptimal and  exponentially stable.

 Now we turn to the verification  of \eqref{e68}.  For any  $T\geq T^*$, $k\geq 1$, and $y_0 \in V$,  $y_{rh} \in W(t_k,t_{k+1};D(A),H)$ is the solution of the following equations
\begin{equation}
\label{e69}
\begin{cases}
\partial_t y(t)-\nu\Delta y(t) + a(t)y(t)+ \nabla \cdot (b(t)y(t)) =  B_{{\mathcal{U}}_{\omega}}\mathbf{u}_{rh}(t)   &\text{ in } (t_k,t_{k+1})\times\Omega,\\
y =0   & \text{ on } (t_k,t_{k+1})\times \partial \Omega,\\
y(t_k)=y_{rh}(t_k) &\text{ on } \Omega,
\end{cases}
\end{equation}
where   $y_{rh}(t_k) \in V \subset H$.   Using Lemma \ref{lem4} and  estimate \eqref{e45} for \eqref{e69}, we obtain
\begin{equation}
\label{e71}
\begin{split}
\|y_{rh}(t_{k+1})\|^2_{V} &\leq c_3(\delta)\left(\|y_{rh}(t_{k})\|^2_{H} +\int^{t_{k+1}}_{t_{k}} \|B_{\mathcal{U}_{\omega}}\mathbf{u}_{rh}(t)\|^2\,dt\right), \\
&\leq c_3(\delta)\left(\|y_{rh}(t_{k})\|^2_{H} +C_{\mathcal{U}_{\omega}}\int^{t_{k+1}}_{t_{k}} |\mathbf{u}_{rh}(t)|^2_{*}\,dt\right).  
\end{split}
 \end{equation}
Moreover by using  Property \ref{P2}, we obtain 
\begin{equation}
\label{e72}
\int^{t_{k+1}}_{t_{k}} |\mathbf{u}_{rh}(t)|^2_{*}\,dt \leq  \frac{2}{\beta} V_T(t_k,y_{rh}(t_k)) \leq \frac{2\gamma_2(T)}{\beta}\|y_{rh}(t_k)\|^2_{H}.
\end{equation}
Using \eqref{e71}, \eqref{e72} and inequality \eqref{ed32} in the proof of Theorem \ref{subopth} in Appendix \ref{apend1}, we can write that 
\begin{equation*}
\begin{split}
\|y_{rh}(t_{k+1})&\|^2_{V} \leq c_3(\delta)\left(1+  \frac{ 2\gamma_2(T)C_{\mathcal{U}_{\omega}}}{\beta} \right)\|y_{rh}(t_k)\|^2_{H}\leq  c_3(\delta)\left(1+  \frac{ 2\gamma_2(T)C_{\mathcal{U}_{\omega}}}{\beta} \right) c'_He^{-\zeta k\delta}\|y_0\|^2_{H}   \\
&\leq  c_3(\delta)\left(1+  \frac{ 2\gamma_2(T)C_{\mathcal{U}_{\omega}}}{\beta} \right) \left( 1- \frac{\alpha \gamma_1(\delta)}{\gamma_2(T)} \right)^{-1}c'_H i_{V,H} e^{-\zeta( k+1)\delta}\|y_0\|^2_{V}, 
\end{split}
\end{equation*}
 where   $c'_H= \frac{\gamma_2(T)}{\gamma_1(T)}$ and $\zeta$ satisfying $e^{-\zeta \delta } = \left( 1- \frac{\alpha \gamma_1(\delta)}{\gamma_2(T)} \right)$ have been defined in the proof of Theorem \ref{subopth},  and $i_{V,H}$ stands for the continuous embedding from $V$ to $H$.   Therefore, defining
 \begin{equation*}
 c'_V := c'_H i_{V,H}c_3(\delta)\left(1+  \frac{ 2\gamma_2(T)C_{\mathcal{U}_{\omega}}}{\beta} \right) \left( 1- \frac{\alpha \gamma_1(\delta)}{\gamma_2(T)} \right)^{-1},
 \end{equation*}
 we obtain 
 \begin{equation}
\label{e73}
\|y_{rh}(t_{k+1})\|^2_{V}  \leq c'_V e^{-\zeta t_{k+1}}\|y_0\|^2_{V}.
\end{equation}
Moreover,  for every $t>0$, there exist a $k \in \mathbb{N}_0$ such that $t\in [t_k,t_{k+1}]$.  Using \eqref{e43} for  equation \eqref{e69},   \eqref{e72},  and \eqref{e73}, we have
\begin{equation*}
\begin{split}
\|y_{rh}(t)\|^2_{V}&\stackrel{\text{\eqref{e43}}}{\leq} c_2(\delta)\left(\| y_{rh}(t_{k})\|^2_V + \int^{t_{k+1}}_{t_k}\| B_{\mathcal{U}_{\omega}}\mathbf{u}(t)\|^2_{H}\,dt \right) 
\stackrel{\text{\eqref{e72}}}{\leq} c_2(\delta)\left(1+\frac{2 i_{V,H} \gamma_2(T) C_{\mathcal{U}_{\omega}}}{\beta}\right)\|y_{rh}(t_{k})\|^2_{V}\\
                                                             &\stackrel{\text{\eqref{e73}}}{\leq} c_2(\delta)\left(1+\frac{2 i_{V,H} \gamma_2(T) C_{\mathcal{U}_{\omega}}}{\beta}\right) c'_V e^{-\zeta t_{k}}\|y_0\|^2_{V}\\
                                                             &= c_2(\delta)c'_V\left(1+\frac{2 i_{V,H} \gamma_2(T) C_{\mathcal{U}_{\omega}}}{\beta}\right)\left( 1- \frac{\alpha \gamma_1(\delta)}{\gamma_2(T)} \right)^{-1}e^{-\zeta t_{k+1}}\|y_0\|^2_{V}\\
                                                              &\leq   c_2(\delta)c'_V\left(1+\frac{2 i_{V,H} \gamma_2(T) C_{\mathcal{U}_{\omega}}}{\beta}\right)\left( 1- \frac{\alpha \gamma_1(\delta)}{\gamma_2(T)} \right)^{-1}e^{-\zeta t}\|y_0\|^2_{V},
\end{split}
\end{equation*}
and by setting 
\begin{equation}
\label{e90}
c_V:= c_2(\delta)c'_V\left(1+\frac{2 i_{V,H} \gamma_2(T) C_{\mathcal{U}_{\omega}}}{\beta}\right) \left( 1- \frac{\alpha \gamma_1(\delta)}{\gamma_2(T)} \right)^{-1},
\end{equation}
we have \eqref{e68} and the proof is finished.
\end{proof} 

\begin{remark}
{\em Since we have now an estimate for $\gamma_1(\cdot)$ in \eqref{e88},  it is of interest to study the effect of the constant $\delta$ on the constants $\alpha(\delta, T)$, $c_H(\delta,T)$, $c_V(\delta,T)$  given in Theorem \ref{Theo5} for a fixed $T\geq T^*$. Due to \eqref{e3}, as $\delta$ is getting smaller for $ \delta <  \frac{T}{2} $,  $\alpha(\delta, T)$  becomes smaller. Moreover, by definitions of $\gamma_1(\cdot)$ given in \eqref{e88}, we can infer that $\gamma_1(\cdot)$ is an increasing function and it vanishes as $\delta \to 0$. Therefore, by reducing the value of $\delta$ (for $\delta<\frac{T}{2}$) as long as $\frac{\gamma_2^2(T)}{\alpha^2_{\ell}\delta(T-\delta)} \leq 1$ holds,  the value of the factor $\left( 1- \frac{\alpha \gamma_1(\delta)}{\gamma_2(T)} \right)$   in  \eqref{ed313}  is getting larger. On the other hand, due to \eqref{e89} and \eqref{e90}, the transient constants  $c_H(\delta,T) $and $c_V(\delta,T)$ are getting smaller since the constant  $c_{\delta}$, $c_2(\delta)$, and $c_3(\delta)$ are strictly increasing functions.}
 \end{remark}

From numerical and theoretical points of view, it is also of interest  to consider the following incremental function within the receding horizon algorithm \ref{RHA}  
\begin{equation}
\label{e75}
\ell(t,y,u):=\frac{1}{2}\| y\|^2_H+\frac{\beta}{2}| \mathbf{u}|^2_{*}, 
\end{equation}
instead of \eqref{e74}. To be more precise, we want to  penalize  the  $L^2(\Omega)$-tracking term instead of the  $H^1_0(\Omega)$-tracking term.  In this case, we will see that, RHC obtained by Algorithm \ref{RHA}, is suboptimal and asymptomatically stable with respect to the $L^2(\Omega)$-norm. In order to derive,  the exponential stability of RHC, we used Property \ref{P3} (see the second part of Theorem \ref{subopth}). This property does not hold for the value function $V_T$  associated to $\eqref{e75}$.   Indeed,  Property \ref{P3} is directly related to the observability inequality \eqref{e14}, which is not satisfied if we change  $H^1_0(\Omega)$-norm with $L^2(\Omega)$-norm, see  \cite{MR2322815, MR2383077,MR1750109}. Therefore, for proving the asymptotic stability of RHC, we need to use a different technique.

\begin{theorem}
Suppose that for given $\mathcal{U}_{\omega}\subset H$ and $\lambda>0$,  condition \eqref{e21} is satisfied with a real number $\Upsilon>0$. Then, for given $\delta>0$,  there exist numbers $T^*(\delta,\mathcal{U}_{\omega})>\delta$ and $\alpha(\delta,\mathcal{U}_{\omega})$ such that for every prediction horizon $T\geq T^*$, the receding horizon control $\mathbf{u}_{rh} \in L^2(0,\infty;\mathbb{R}^N)$, obtained by Algorithm \ref{RHA} with the incremental function \eqref{e75},  is globally \textbf{suboptimal}  and \textbf{asymptotically stable} with respect to $H$.
\end{theorem}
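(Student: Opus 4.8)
The plan is to observe that asymptotic stability, unlike the exponential stability of Theorem \ref{Theo5}, does \emph{not} require \ref{P3} (which indeed fails here, since the observability inequality \eqref{e14} is unavailable for the $L^2(\Omega)$-tracking term), and to recover it from suboptimality together with a global energy estimate. First I would verify that \ref{P1}, \ref{P2} and the coercivity \eqref{estiob} hold for the value function $V_T$ associated with \eqref{e75}. Coercivity is immediate with $\alpha_\ell=\min\{\tfrac12,\tfrac{\beta}{2}\}$, because the tracking term is already $\tfrac12\|y\|_H^2$ and $|\mathbf{u}|_*^2\ge|\mathbf{u}|_2^2=\|\mathbf{u}\|_U^2$. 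Property \ref{P1} is verbatim Proposition \ref{prop1}. For \ref{P2} I would mimic Proposition \ref{prop2p3}: under \eqref{e21}, Proposition \ref{theo1} furnishes a steering control $\hat{\mathbf{u}}$ as in \eqref{e62} obeying \eqref{e61} and \eqref{e64}; integrating the decay \eqref{e61} over $(t_0,t_0+T)$ now bounds the tracking part directly, so that $V_T(t_0,y_0)\le J^p_T(\hat{\mathbf{u}};t_0,y_0)\le\gamma_2(T)\|y_0\|_H^2$ with $\gamma_2$ bounded and nondecreasing. With \ref{P1}--\ref{P2} and \eqref{estiob} in force, the first (suboptimality) half of Theorem \ref{subopth} applies unchanged and yields the chain \eqref{ed27}, i.e. global suboptimality.

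Next I would exploit the finiteness of $J_\infty(\mathbf{u}_{rh};0,y_0)$ implied by \eqref{ed27}. For the incremental function \eqref{e75} one has $2J_\infty(\mathbf{u}_{rh};0,y_0)=\int_0^\infty(\|y_{rh}(t)\|_H^2+\beta|\mathbf{u}_{rh}(t)|_*^2)\,dt$, so \eqref{ed27} gives
\begin{equation*}
\int_0^\infty\left(\|y_{rh}(t)\|_H^2+\beta\,|\mathbf{u}_{rh}(t)|_*^2\right)dt\le\frac{2}{\alpha}V_\infty(y_0)<\infty,
\end{equation*}
whence $t\mapsto\|y_{rh}(t)\|_H^2$ and $t\mapsto|\mathbf{u}_{rh}(t)|_*^2$ both lie in $L^1(0,\infty)$. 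Since $\|B_{\mathcal{U}_\omega}\mathbf{u}_{rh}\|_H^2\le C_{\mathcal{U}_\omega}|\mathbf{u}_{rh}|_2^2\le C_{\mathcal{U}_\omega}|\mathbf{u}_{rh}|_*^2$, the map $t\mapsto\|B_{\mathcal{U}_\omega}\mathbf{u}_{rh}(t)\|_H^2$ is integrable as well.

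The crux is to upgrade this integrability to $\|y_{rh}(t)\|_H\to0$. Because $y_{rh}$ is a weak solution of \eqref{e69} on each sampling interval and is continuous across the sampling instants, $h(t):=\|y_{rh}(t)\|_H^2$ is locally absolutely continuous on $[0,\infty)$. Testing the equation with $y_{rh}$ exactly as in \eqref{e66} and applying Young's inequality, then discarding the nonnegative $\tfrac{\nu}{2}\|y_{rh}\|_V^2$ term, I obtain the differential inequality
\begin{equation*}
\frac{d}{dt}\|y_{rh}(t)\|_H^2\le C'\|y_{rh}(t)\|_H^2+\|B_{\mathcal{U}_\omega}\mathbf{u}_{rh}(t)\|_H^2=:g(t)\qquad\text{a.e. }t>0,
\end{equation*}
with $C'=1+c^2N^2(a,b)/\nu\ge0$ and, by the previous step, $g\in L^1(0,\infty)$, $g\ge0$. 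Setting $G(t):=\int_0^t g(s)\,ds$, the function $h-G$ has nonpositive derivative a.e. and is bounded below by $-\|g\|_{L^1(0,\infty)}$ (since $h\ge0$ and $G\le\|g\|_{L^1(0,\infty)}$); hence it is nonincreasing and convergent, so $h=(h-G)+G$ converges to some limit $L\ge0$. As $h\in L^1(0,\infty)$, necessarily $L=0$, i.e. $\|y_{rh}(t)\|_H\to0$, which is the asserted asymptotic stability with respect to $H$.

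I expect the last step --- substituting the $L^1$/monotonicity argument for the exponential-decay mechanism of \ref{P3} --- to be the main obstacle, the delicate points being the justification of the energy inequality globally in time across the concatenation of open-loop pieces and the verification that $C'$ is genuinely independent of the sampling structure. Everything preceding it is a direct transcription of Propositions \ref{prop1}, \ref{theo1}, \ref{prop2p3} and the suboptimality half of Theorem \ref{subopth}.
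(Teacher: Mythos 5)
Your proposal is correct, and it reaches the paper's conclusion by a genuinely different mechanism in the crucial last step. The first half is identical to the paper: you verify \ref{P1}, \eqref{estiob} and \ref{P2} (the paper does this with the explicit choices \eqref{e82} for $\gamma_2$, obtained exactly as you describe from \eqref{e61} and \eqref{e64}), invoke the suboptimality half of Theorem \ref{subopth}, and extract the $L^1(0,\infty)$ bounds on $\|y_{rh}(\cdot)\|_H^2$ and $|\mathbf{u}_{rh}(\cdot)|_*^2$, which is precisely \eqref{e79}. Where you diverge is in upgrading integrability to decay. The paper keeps the $V$-norm: it integrates the energy inequality to get $\|y_{rh}\|_{L^\infty(0,\infty;H)}$ and $\|y_{rh}\|_{L^2(0,\infty;V)}$ bounds (\eqref{e76}--\eqref{e78}), uses these together with \eqref{e79} in the weak formulation to prove the uniform H\"older-type estimate $\|y_{rh}(t'')\|_H^2-\|y_{rh}(t')\|_H^2\le c_7\|y_0\|_H^2 (t''-t')^{1/2}$ (\eqref{e77}), and then runs a sliding-window contradiction argument against \eqref{e80}. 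You instead discard the $\nu\|y_{rh}\|_V^2$ term, obtaining the scalar differential inequality $h'\le g$ a.e.\ with $g=C'h+\|B_{\mathcal{U}_\omega}\mathbf{u}_{rh}\|_H^2\in L^1(0,\infty)$, and conclude via the monotonicity of $h-\int_0^{\cdot}g$ (a Barbalat-type lemma: $h\ge 0$, $h\in L^1$, $h'\le g\in L^1$ imply $h\to 0$). Your route is shorter and more elementary: it never needs the $L^2(0,\infty;V)$ bound or the $L^\infty$ bound, only the local absolute continuity of $t\mapsto\|y_{rh}(t)\|_H^2$ (which holds, as you note, because $y_{rh}$ is a concatenation of $W(t_k,t_{k+1};V,V')$ solutions matching at the sampling instants) and a constant $C'$ depending only on $\nu$, $N(a,b)$ and $\Omega$, hence independent of the sampling structure. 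What the paper's longer route buys is quantitative information: the explicit H\"older modulus $c_7\|y_0\|_H^2(t''-t')^{1/2}$ and the global $L^2(0,\infty;V)$ bound, which are of independent interest; your argument yields the qualitative limit $\|y_{rh}(t)\|_H\to 0$ with less machinery. Both proofs are valid verifications of the theorem.
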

\begin{proof}
First, we need to verify  Properties \ref{P1} and \ref{P2}. Property \ref{P1} is clearly satisfied since the optimal control problems \ref{e48} with  incremental functions of the form \eqref{e75} are positive, coercive, and weakly sequentially lower-semicontinuous. Moreover, in a similar manner as in the proof of Proposition \ref{prop2p3}  and by using \eqref{e61} and \eqref{e64}, it can be shown that Property \ref{P2} holds for the following choices of $\gamma_2(T)$  depending on the norm $| \cdot |_{*}$
\begin{equation}
\label{e82}
\begin{split}
\gamma^{\ell_1}_2(T) :=\frac{\Theta_1+\beta N c_4\Theta_2 }{2\lambda}(1-e^{-\lambda T}) \quad \text{ and }  \quad \gamma^{\ell_2}_2(T) :=\frac{\Theta_1+\beta  c_4\Theta_2 }{2\lambda}(1-e^{-\lambda T}),
\end{split}
\end{equation}
where the constants  $\Theta_1$, $\Theta_2$, and $c_4$ have been given in the proof of Proposition \ref{prop2p3}.

Now since Properties \ref{P1} and \ref{P2} hold, we are in the position that we can use the first part of  Theorem \ref{subopth}.  Hence,  there exist numbers $T^* > \delta$, and $\alpha \in (0,1)$,  such that for every fixed  prediction horizon $T \geq T^*$ and every  $y_0 \in H$,  the suboptimality inequality \eqref{ed27} holds.

 Now we show that RHC is asymptotically stable i.e., $\lim_{t \to \infty} \|y_{rh}(t)\|_H = 0$ for every $y_0 \in H$. Using the suboptimality inequality \eqref{ed27} and \ref{P2}, we can write
\begin{equation}
\label{e79}
\int^{\infty}_0\|y_{rh}(t)\|^2_H\,dt  \leq \frac{2\gamma_2(T)}{\alpha} \|y_0\|^2_H   \quad  \text{ and }    \quad  \int^{\infty}_0 |\mathbf{u}_{rh}(t)|^2_{*}\,dt  \leq \frac{2\gamma_2(T)}{\alpha \beta} \|y_0\|^2_H.
\end{equation}
Moreover, in a similar manner as in the proof of Proposition \ref{prop2p3} (see \eqref{e66}-\eqref{e67}), it can be shown for every $t\geq t_0$ that
\begin{equation}
\begin{split}
\label{e76}
\|y_{rh}(t) \|^2_H   +  \nu \int^{t}_{0} \|y_{rh}(t)\|^2_V\,dt  &\leq \|y_0\|^2_H +  c_5\int^{\infty}_{0} \|y_{rh}(t)\|^2_H\,dt + \int^{\infty}_{0}| \mathbf{u}_{rh}(t)|^2_{2}\,dt \leq c_6\|y_0\|^2_H,
\end{split}
\end{equation}
where  $c_6 :=\left(1 +\frac{ 2(1+\beta c_5)\gamma_2(T)}{\alpha\beta}\right)$,   and   $c_5$ has been defined in the proof of Proposition \ref{prop2p3}. Due to \eqref{e76}  we can conclude
\begin{equation}
\label{e78}
\|y_{rh}\|^2_{L^{\infty}(0,\infty;H)} \leq  c_6 \|y_0\|^2_H    \quad  \text{ and }    \quad   \int^{\infty}_{0} \|y_{rh}(t)\|^2_V \leq \frac{c_6}{\nu} \|y_0\|^2_H.
\end{equation}
Further, we have for every $t'' \geq t' \geq 0$ that
{\small
\begin{equation}
\begin{split}
\label{e77}
&\|y_{rh}(t'')\|^2_{H}-\|y_{rh}(t')\|^2_{H}=\int^{t''}_{t'}\frac{d}{dt}\|y_{rh}(t)\|^2_{H}dt,\\
&=2\int^{t''}_{t'}\langle  y_{rh}(t), \nu\Delta y_{rh}(t) - a(t)y_{rh}(t)-\nabla \cdot (b(t)y_{rh}(t))+ B_{\mathcal{U}_{\omega}}\mathbf{u}_{rh}(t) \rangle_{V,V'}dt,\\
&=-2\nu\int^{t''}_{t'}\| \nabla y_{rh}(t)\|^2_{H}dt+2\int^{t''}_{t'}\langle - a(t)y_{rh}(t)-\nabla \cdot (b(t)y_{rh}(t))+ B_{\mathcal{U}_{\omega}}\mathbf{u}_{rh}(t),y_{rh}(t)\rangle_{V,V'}dt,\\
&\leq 2N(a,b)\int^{t''}_{t'}\|y_{rh}(t)\|_{V}\|y_{rh}(t)\|_{H}dt+2C_{\mathcal{U}_{\omega}}\int^{t''}_{t'}|\mathbf{u}_{rh}(t)|_{2}\|y_{rh}(t)\|_{H}dt\\
&\leq 2N(a,b)\big( \int^{t''}_{t'}\|y_{rh}(t)\|^2_{V}dt\big)^{\frac{1}{2}}\big( \int^{t''}_{t'}\|y_{rh}(t)\|^2_{H}dt\big)^{\frac{1}{2}}+2C_{\mathcal{U}_{\omega}}\big( \int^{t''}_{t'}|\mathbf{u}_{rh}(t)|^2_{2}dt\big)^{\frac{1}{2}}\big( \int^{t''}_{t'}\|y_{rh}(t)\|^2_{H}dt\big)^{\frac{1}{2}},\\
&\leq   c_7 \|y_0\|^2_{H}(t''-t')^{\frac{1}{2}},
\end{split}
\end{equation}}
where  $c_7:= 2\left(N(a,b){\nu}^{\frac{-1}{2}}c_{6}  +  C_{\mathcal{U}_{\omega}}c^{\frac{1}{2}}_6 \left(\frac{2\gamma_2(T)}{\alpha \beta}\right)^{\frac{1}{2}} \right)$ and in  the last inequality \eqref{e79}  and  \eqref{e78} have been used.  Moreover, due to the left inequality in \eqref{e79}, we infer for any  $L>0$ that
\begin{equation}
\label{e80}
\lim_{t \to \infty}\int^{t}_{t-L}\|y_{rh}(s)\|^2_{H}ds = 0
\end{equation}
 Now suppose to contrary that
\begin{equation*}
\lim_{t \to \infty}\|y_{rh}(t)\|^2_{H} \neq 0.
\end{equation*}
Then there exists an  $\epsilon>0$ and a positive sequence $\{t_n\}^{\infty}_{n=1}$ with $\lim_{n \to \infty}t_n = \infty$ for which
\begin{equation}
\label{e81}
\|y_{rh}(t_n)\|^2_{H}>\epsilon \quad \text{ for all } n = 1,2,\dots.
\end{equation}
It follows from \eqref{e77} and \eqref{e81} that for every $L >0$ and $n = 1,2,\dots$
\begin{equation*}
\begin{split}
\int^{t_n}_{t_n-L} \|y_{rh}(t)\|^2_{H}dt &=\int^{t_n}_{t_n-L} \|y_{rh}(t_n)\|^2_{H}dt - \int^{t_n}_{t_n-L}\big(\|y_{rh}(t_n)\|^2_{H}-\|y_{rh}(t)\|^2_{H}\big)dt,\\
&> L\epsilon-c_7\|y_0\|^2_{H}\int^{t_n}_{t_n-L}(t_n-t)^\frac{1}{2}dt= L\epsilon-\frac{2}{3}c_7\|y_0\|^2_{H}L^{\frac{3}{2}}.
\end{split}
\end{equation*}
Setting $\sigma := \frac{2}{3}c_7 \|y_0\|^2_{H}$, and choosing $L:=(\frac{\epsilon}{2\sigma})^2$, we obtain
\begin{equation*}
\int^{t_n}_{t_n-L} \|y_{rh}(t)\|^2_{H}dt > \frac{L\epsilon}{2} \quad \text{ for all } n=1,2,\dots,
\end{equation*}
 and  this leads to a contradiction  to \eqref{e80}. Hence $\lim_{t \to \infty}\|y_{rh}(t)\|^2_{H}=0$ and the proof is complete.
\end{proof}
\begin{remark}
 \label{Re1}
{\em By comparing   $\gamma^{\ell_1}_2(T)$ with $\gamma^{\ell_2}_2(T)$, from the proof of Proposition \ref{prop2p3} and  \eqref{e82}, we can see that for both of the choices of  \eqref{e74} and \eqref{e75} for the incremental function $\ell$, we have
\begin{equation}
\label{e83}
\gamma^{\ell_1}_2(T) = \gamma^{\ell_2}_2(T)+ \frac{\beta  c_4(N-1)\Theta_2 }{2\lambda}(1-e^{-\lambda T}).
\end{equation}
Hence, on the basis of  \eqref{e3},  we can conclude that $T^*_1 \geq T^*_2$ for every fixed $\delta$ and $\alpha$, where $T^*_1$ and $T^*_2$  correspond to $\ell_1$-norm and $\ell_2$-norm, respectively. Furthermore, it can be seen that for the same value of $\alpha$ we obtain
\begin{equation*}
T^*_1 -T^*_2 \approx O((N-1)^2).
\end{equation*}}
\end{remark}

\section{Numerical Experiments}
In this section we report on our numerical experiments with Algorithm \ref{RHA} for an exponentially unstable parabolic equation which illustrate the theoretical findings. 
Both the $\ell_1$-  and  $\ell_2$-norm for the control penalty terms $\mathcal{G}(\cdot)$ are used and  different values of the prediction horizon $T$ for the fixed value of the sampling time $\delta:=0.25$ are considered. 
Throughout,  we set $T_{\infty}=10$ as  the final computation time and our control domain $\omega$ was defined as an union of two open rectangles of the form \eqref{e100}.  For each of these rectangle, the set actuators were chosen as in \eqref{e93}. The spatial discretization was done by a conforming linear finite element  scheme  using  continuous  piecewise  linear  basis  functions  over  a  uniform triangulation.  The spatial domain was chosen to be $\Omega :=(0,1)^2 \subset \mathbb{R}^2$ and it was discretized by $1089$ cells. Then the ordinary differential equations resulting after spatial discretization were numerically solved by the Crank-Nicolson time stepping  method  with  step-size $\Delta t =0.0125$.  For solving the finite horizon optimal control problems for the  $\ell_2$-norm, we employed the Barzilai-Borwein (BB) gradient method  \cite{azmi2018analysis,MR967848} to the reduced problem \eqref{e94},  where  $\mathcal{G}(\mathbf{u}):=\frac{\beta}{2}\int^{t_k+T}_{t_k}| \mathbf{u}(t)|^2_2$. For this case the BB method was terminated as the $L^2(t_k,t_{k}+T ; \mathbb{R}^N)$-norm of reduced gradient was less than $10^{-5}$. Further, for the case of  $\ell_1$-norm i.e.,  $\mathcal{G}(\mathbf{u}):=\frac{\beta}{2}\int^{t_k+T}_{t_k}| \mathbf{u}(t)|^2_1$, we applied a similar proximal gradient method as that investigated in \cite{MR2792408,MR2678081,MR2650165}  on  problem \eqref{e94}. More precisely,  we followed the iteration rule
\begin{equation*}
\mathbf{u}^{j+1} =\prox_{\alpha_j\mathcal{G}}(\mathbf{u}^j -\alpha_j\mathcal{F}'(u^j) )= \prox_{\alpha_j\mathcal{G}}(\mathbf{u}^j -\alpha_j B_{\mathcal{U}_{\omega}}^*p^{j} ),
\end{equation*}
where $p^{j}:=p(y^{j})$ is the solution of \eqref{e52} for the forcing function $\Delta y^j$ instead of $\Delta y^*$, and  $y^{j}=y(\mathbf{u}^{j})$ is defined as the solution of \eqref{e50} for the control $\mathbf{u}^{j}$ instead of $\mathbf{u}$. Moreover, the stepsize $\alpha_j$ is computed by a non-monotone linesearch algorithm which uses the BB-stepsize corresponding to the smooth part $ \mathcal{F}$  as the initial trial stepsize, see \cite{MR2792408,MR2678081,MR2650165}  for more details.   In this case the optimization algorithm was terminated as the following condition held
\begin{equation*}
\frac{\| \mathbf{u}^{j+1}-\mathbf{u}^{j} \|_{L^2(t_k,t_{k}+T ; \mathbb{R}^N)}  }{\| \mathbf{u}^{j+1} \|_{L^2(t_k,t_{k}+T ; \mathbb{R}^N)}} \leq 10^{-4}.
\end{equation*}
The evaluation of the  proximal operator $\prox_{\bar{\alpha}\mathcal{G}}$ was carried out by pointwise evaluation \eqref{e87} at time grid points. Further, at every time grid point,  $\prox_{\bar{\alpha} g }$ was computed by \eqref{e85}, where the zero $\mu^*$ of the function  $\psi(\mu)$  defined in \eqref{e95} was found by the bisection method with the tolerance $10^{-10}$.
For all numerical tests,  we set $\nu =0.1$,  and defined
\begin{equation*}
 a(t,x):=-2.8-0.8|\sin(t+x_1)|, \quad b(t,x):= \binom{-0.01(x_1+x_2)}{0.2x_1x_2\cos(t)},
\end{equation*}
and $y_0(x):=3\sin(\pi x_1)\sin(\pi x_2)$. For this choice, the uncontrolled state  $y^{un}$ is exponentially unstable. This fact is illustrated in Figures \ref{Fig3} and \ref{Fig4}. The first curve with the black color in Figure \ref{Fig3}  (resp. Figure \ref{Fig4}) is corresponding to the evolution of $\log(\|y^{un}(t)\|_H)$ (resp. $\log(\|y^{un}(t)\|_V)$). Moreover, we have
\begin{equation*}
\|y^{un}\|_{L^2(0,T_{\infty}:V)}=3.20 \times 10^6, \quad  \|y^{un}(T_{\infty})\|_{V}=5.32\times 10^6 , \text{ and } \|y^{un}(T_{\infty})\|_{H}=1.19\times 10^6.
\end{equation*}
Figure \ref{Figa} depicts some snapshots of  the uncontrolled state $y^{un}$.

As performance criteria, we considered the quantities: 1. {\small $J^p_{T_{\infty}}(u_{rh},y_0)$},  2. {\small $\|y_{rh}\|_{L^2(0,T_{\infty};V)}$},  3.{\small $\|y_{rh}(T_{\infty})\|_{V}$}, 4. {\small $\|y_{rh}(T_{\infty})\|_{H}$}, 5. iter : the \textit{total} number of iterations that the optimization algorithm needs for all open-loop problems on the intervals $(t_i, t_{i}+T)$ for $i =0,\dots, r-1$ with $r: = \frac{T_{\infty}}{\delta}$.  All
computations were done in the MATLAB platform.
\begin{figure}[htbp]
    \centering
    \subfigure[$t=0$]
    {
       \includegraphics[height=3cm,width=4cm]{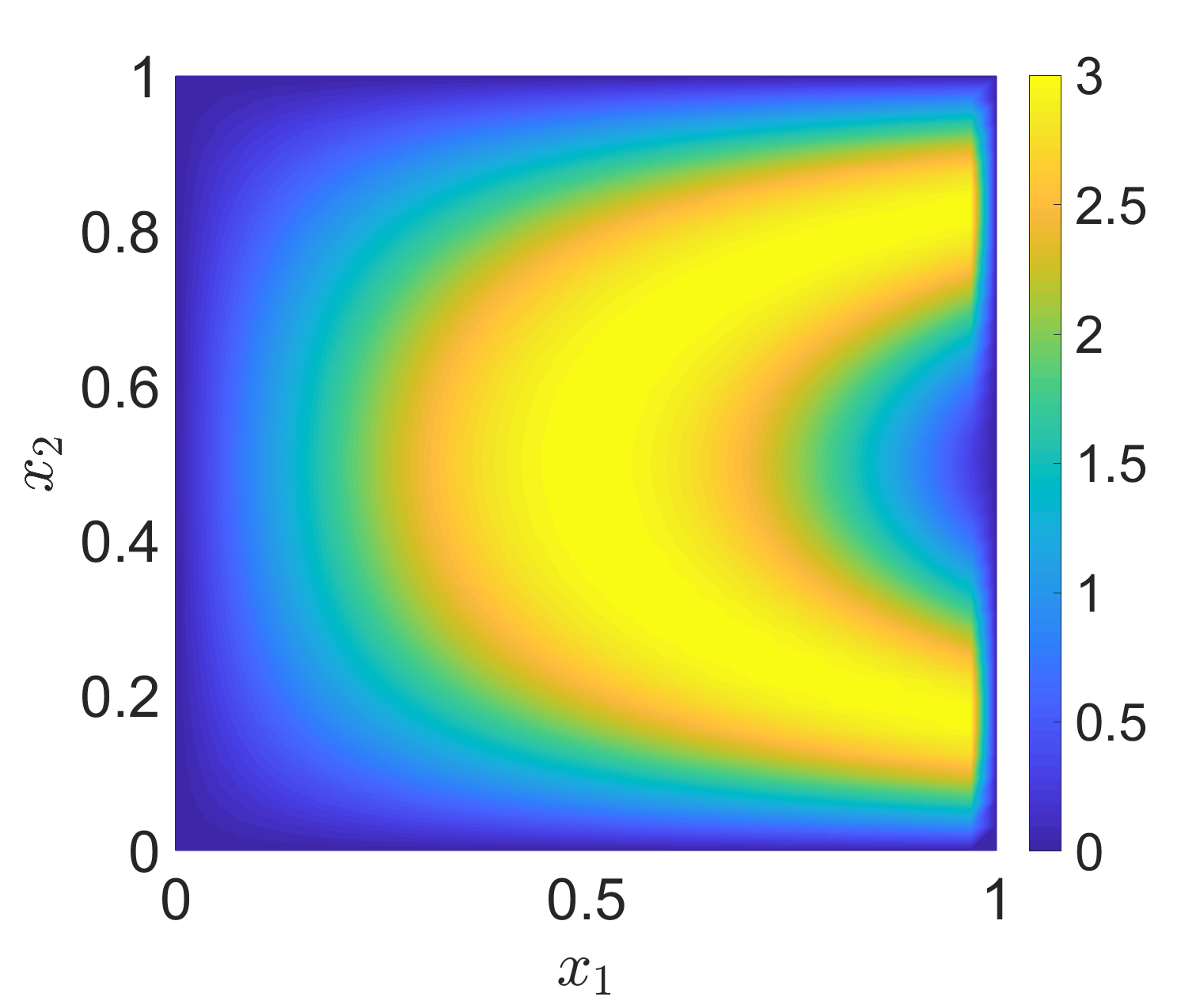}
    }
    \subfigure[ $t =5$]
    {
        \includegraphics[height=3cm,width=4cm]{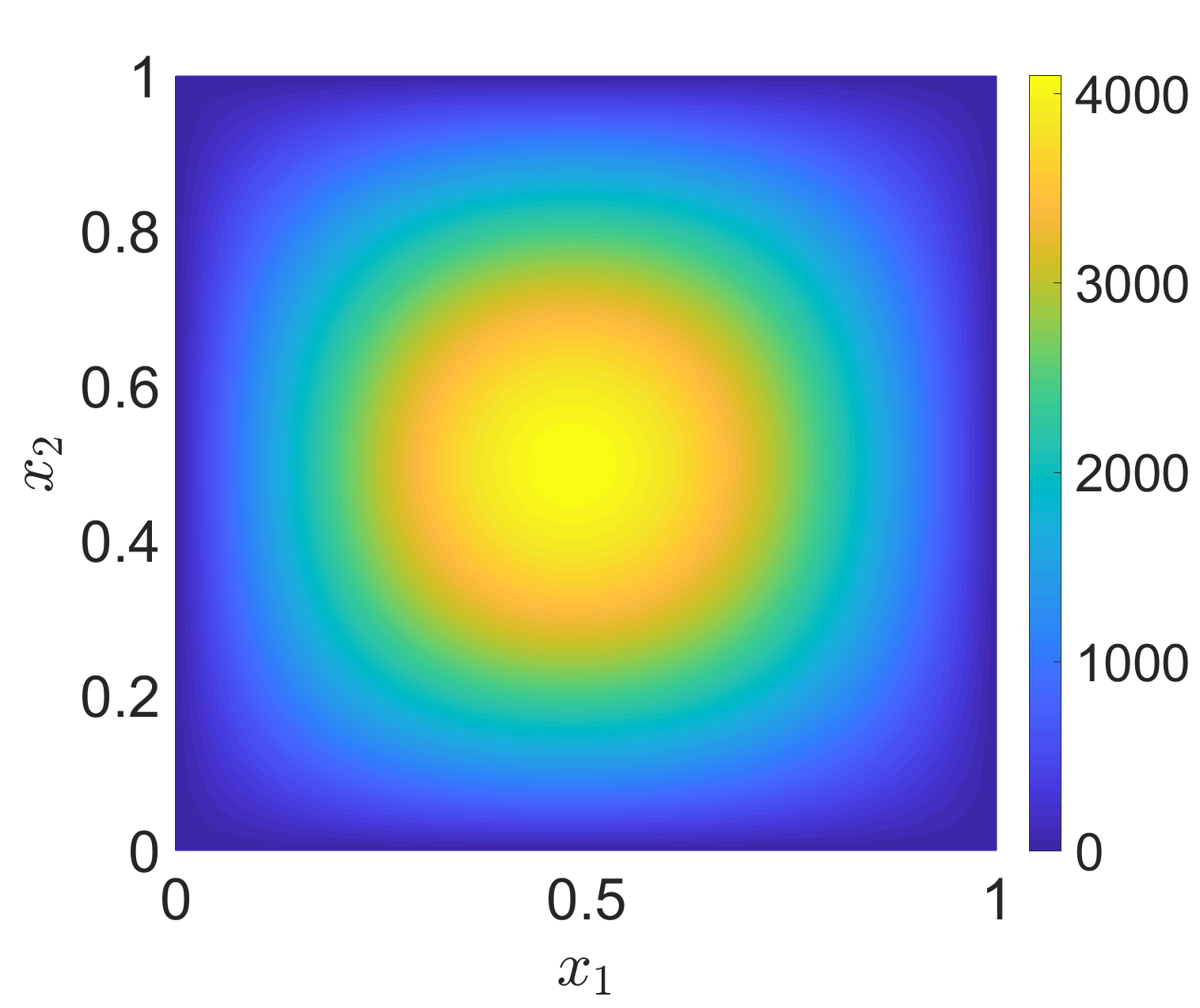}
    }
    \subfigure[ $t=10$]
    {
        \includegraphics[height=3cm,width=4cm]{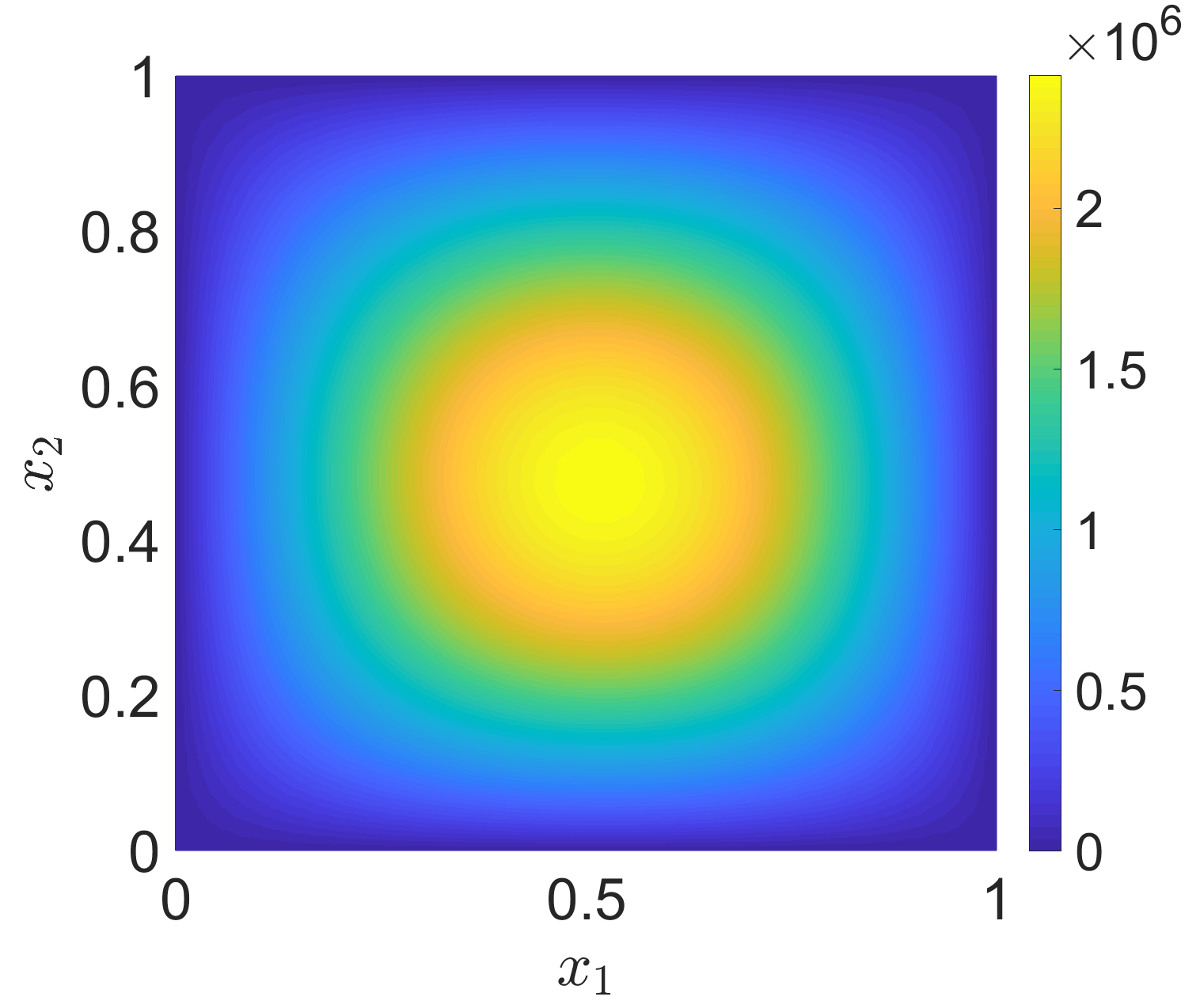}
    }
    \caption{ Several snapshots of the uncontrolled state }
    \label{Figa}
\end{figure}
\begin{example}
\label{exp1}
In this example, we ran Algorithm \ref{RHA} for the $\ell_2$-norm control cost with $\beta = 1000$, and different values of the prediction horizon $T$ with fixed $\delta =0.25 $.  Here the set of actuators consists of four actuators (indicator function), whose supports are specified in Figure \ref{Fig1}. The control domain $\omega =\cup^4_{i=1}R_i$ covers only 8 percent of  the domain. The corresponding numerical results are gathered in Table \ref{table1}.
\begin{table}[htbp]
\begin{center}
\scalebox{0.7}{
  \begin{tabular}{ | c | c | c | c | c | c | c |}
    \hline
    Prediction Horizon & $J^p_{T_{\infty}}$ & $\|y_{rh}\|_{L^2(0,T_{\infty};V)}$  & $\|y_{rh}(T_{\infty})\|_{V}$ &$\|y_{rh}(T_{\infty})\|_{H}$ & iter\\
    \hline
    $T = 1.5$ & $4.04\times 10^{1}$ &$ 8.59$ &$ 1.11 \times 10^{-5}$ & $ 1.50 \times 10^{-6}$& $2873$ \\
    \hline
    $T = 1$ &$4.35\times 10^{1}$ &$8.98$&$ 7.03\times10^{-4}$  &$ 9.83\times10^{-5}$ &$2411$ \\
    \hline
    $T = 0.75$ & $5.95\times 10^{1}$ &$1.06\times 10^{1}$& $5.38 \times 10^{-2}$ & $8.20 \times 10^{-3}$&$2046$ \\
    \hline
    $T = 0.5$ & $8.72\times 10^{2}$ &$4.12\times 10^{1}$& $1.50\times 10^{1} $ & $2.75$&$1649$ \\
    \hline
    $T = 0.25$ &$1. 97\times 10^{7}$& $6.24 \times 10^{3}$&  $6.96 \times 10^{3}$ &$1.50 \times 10^{3}$&$1063$ \\
    \hline
 \end{tabular}}
 \end{center}
 \caption{Numerical results for Example \ref{exp1}}
 \label{table1}
\end{table}
Moreover, Figures \ref{Fig3} and \ref{Fig4} demonstrate the logarithmic evolution of the spatial norm of the RH states  with respect to the different norms $H$ and $V$,  and  for different choices of $T$.   From Figures \ref{Fig3} and \ref{Fig4} and Table \ref{table1}, it can be observed that the RH state for the choices $T \in \{0.25, 0.5\}$ is exponentially unstable ($T^*>0.5$),  whereas for $T \in \{1,1.25,1.5\}$, it is exponentially stabilizing.  For the case $T=0.75$, it seems that RH state is stable but not asymptotically stable. Moreover, for every choice of $T$, the exponential rates for both  norms $H$ and $V$ are equal. By comparing the numerical results, we can conclude that the larger $T$ was chosen,  the better the performance  of RHC was achieved. However,  a larger prediction horizon $T$ leads to  a larger number of overall iterations.
 \begin{figure}[htbp]
    \centering
 \subfigure[ Example \ref{exp1} ]
    {
     \label{Fig1}
        \includegraphics[height=2.5cm,width=2.5cm]{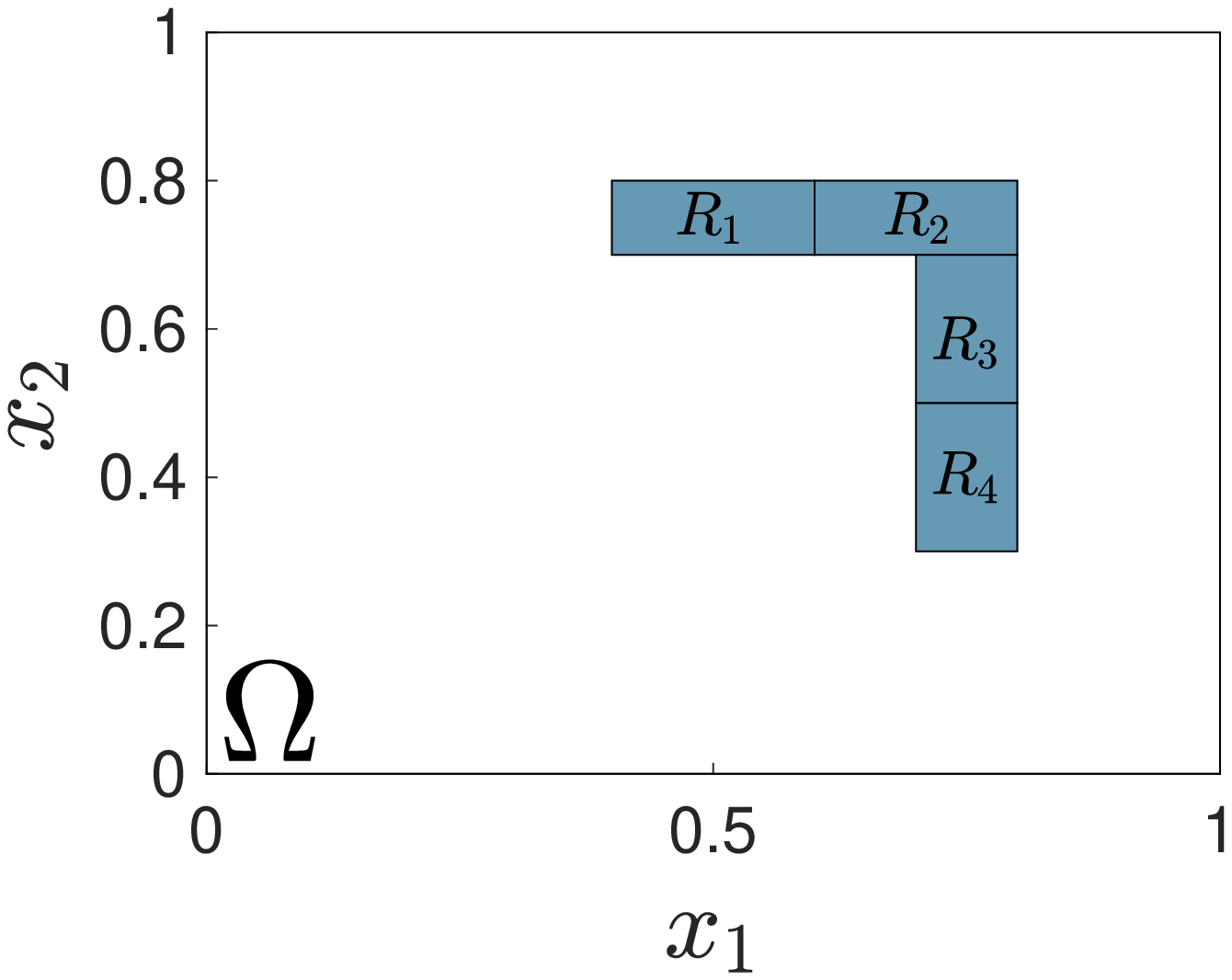}
    }
    \subfigure[ Example \ref{exp2} ]
    {
    \label{Fig2}
        \includegraphics[height=2.5cm,width=2.5cm]{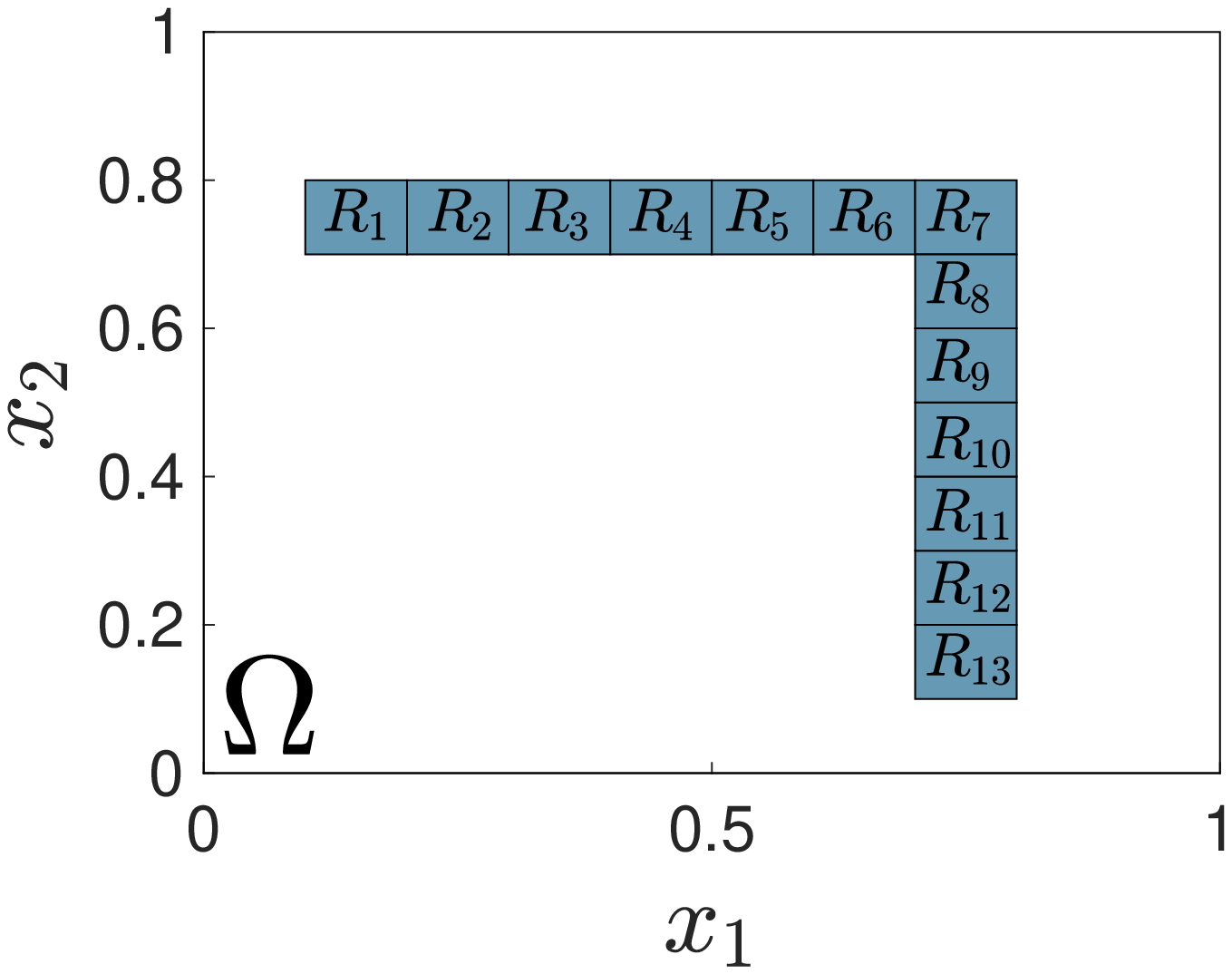}
    }
     \caption{Control domains}
\end{figure}
The logarithmic evolution for the absolute value of the RH controllers for the choices $T \in \{1.5,0.5\}$  are plotted in Figures \ref{Fig5} and \ref{Fig6}. As expected the corresponding RH controllers are more regular, if the ratio of prediction horizon $T$ to sampling time  $\delta$ is large.
\begin{figure}[htbp]
    \centering
    \subfigure[$t=0$]
    {
       \includegraphics[height=3cm,width=4cm]{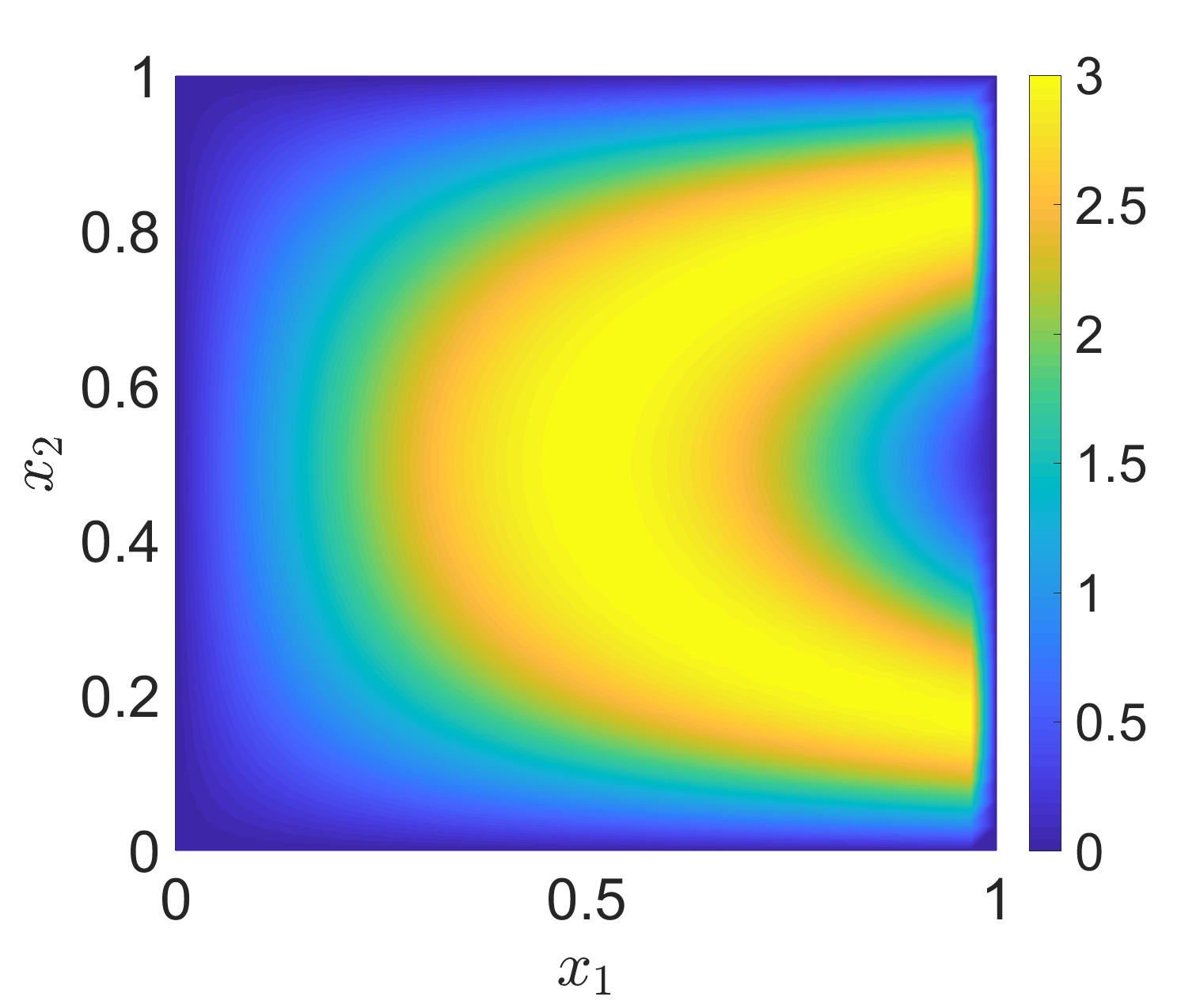}
    }
    \subfigure[ $t =5$]
    {
        \includegraphics[height=3cm,width=4cm]{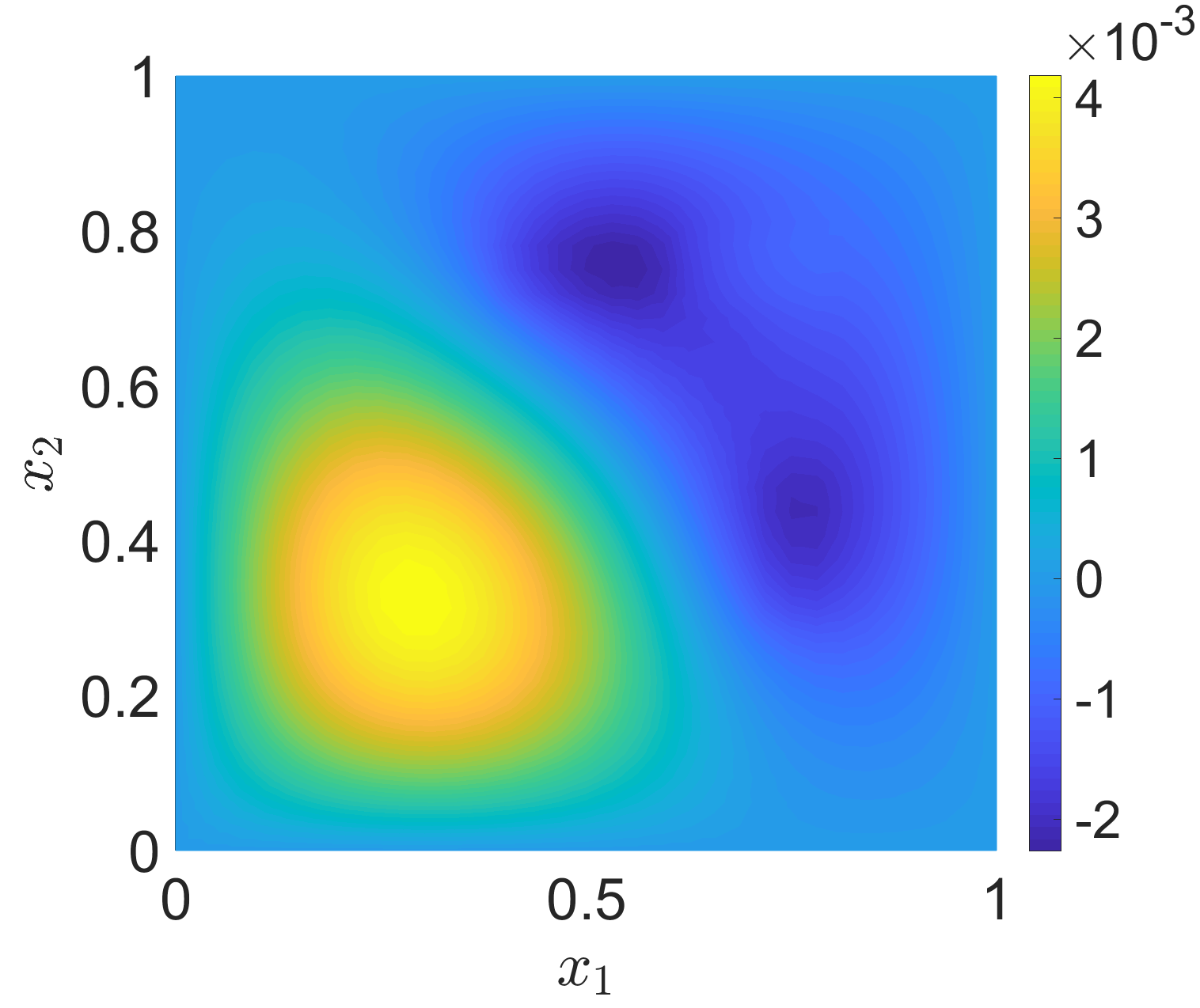}
    }
    \subfigure[ $t=10$]
    {
        \includegraphics[height=3cm,width=4cm]{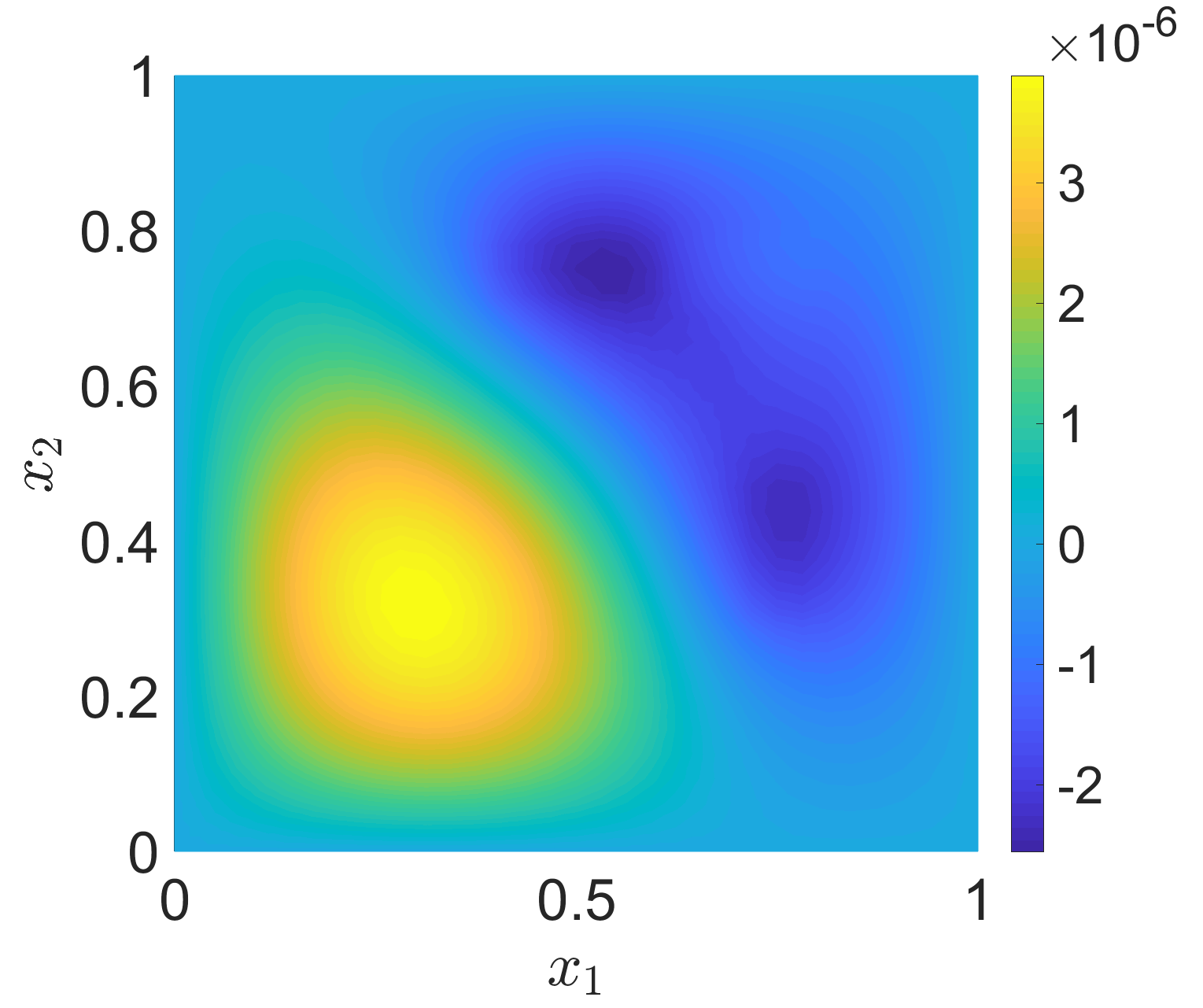}
    }
     \caption{Several snapshots of the RH state for the choice of $T = 1.5$  corresponding to Example \ref{exp1}}
    \label{Figb}
\end{figure}
Figure \ref{Figb} shows the RH state at different times for the choice of $T = 1.5$.
\begin{figure}[htbp]
    \centering
 \subfigure[$\mathcal{X}=V$ ]
    {
     \label{Fig3}
        \includegraphics[height=4cm,width=5cm]{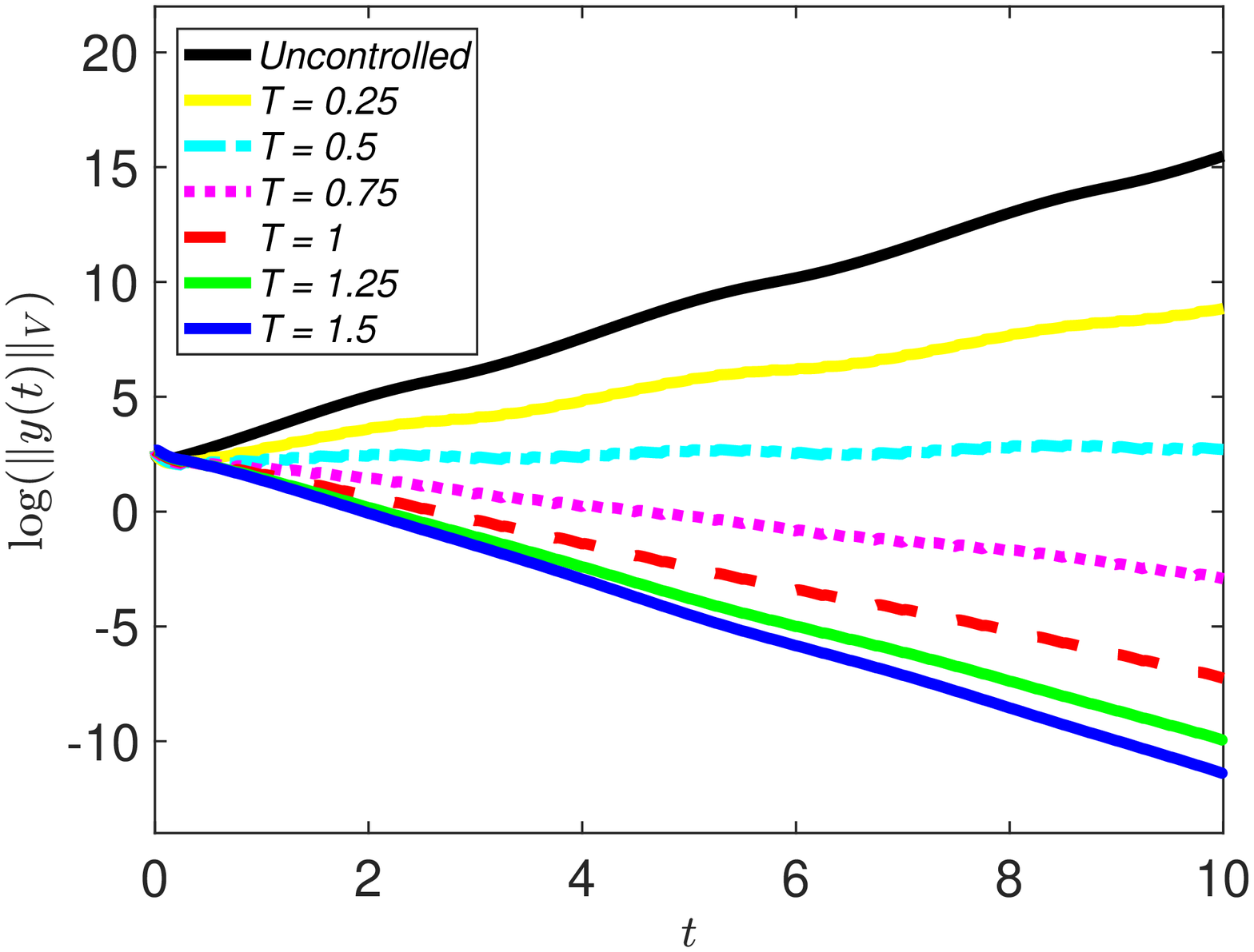}
    }
    \subfigure[$\mathcal{X}=H$ ]
    {
    \label{Fig4}
        \includegraphics[height=4cm,width=5cm]{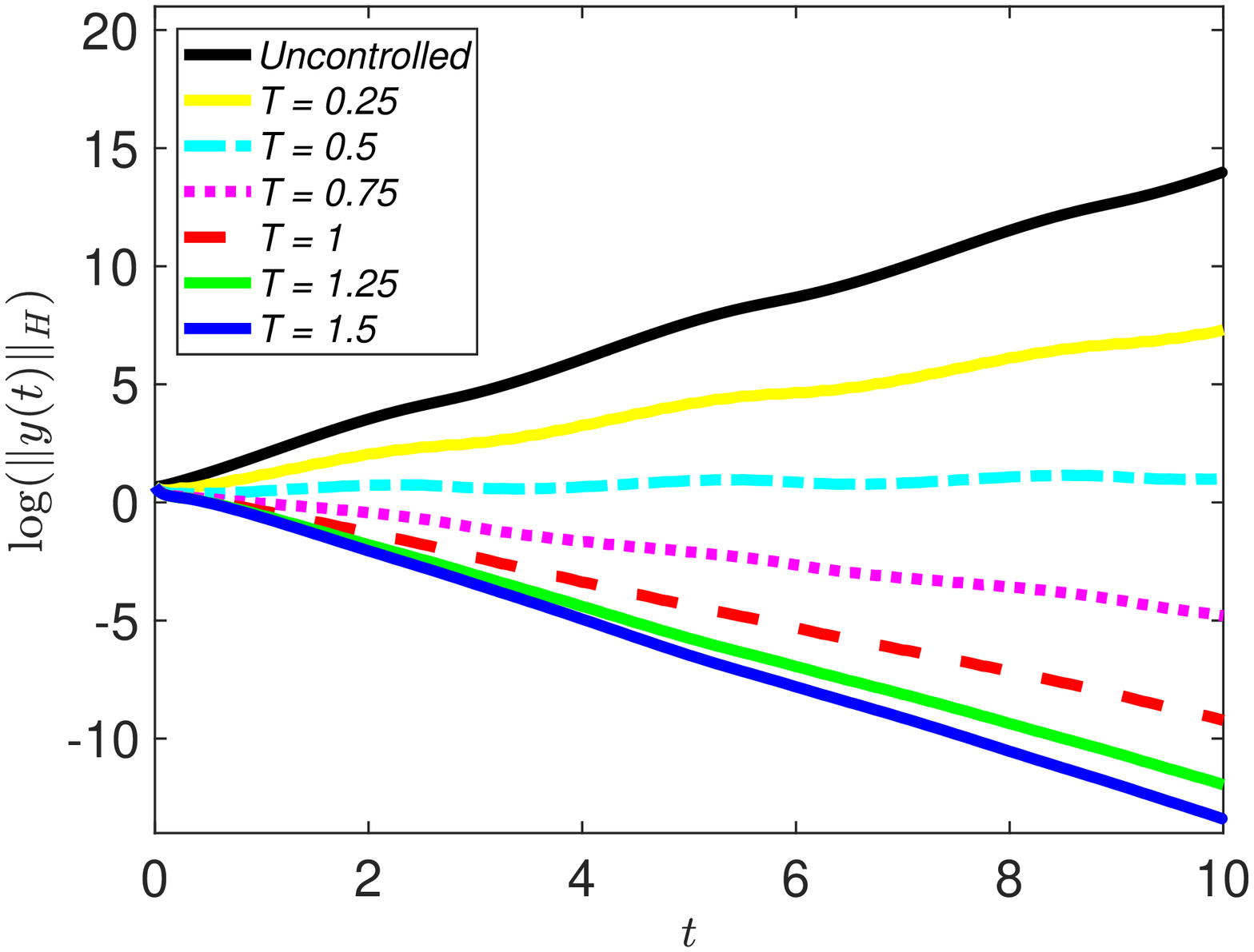}
    }
  \caption{Evolution of $\log(\|y_{rh}(t)\|_{\mathcal{X}})$ corresponding to Example \ref{exp1} for different choices of $T$ and $\mathcal{X}$}
\end{figure}

\begin{figure}[htbp]
    \centering
\subfigure[$T =1.5$ ]
    {
    \label{Fig5}
        \includegraphics[height=5cm,width=6cm]{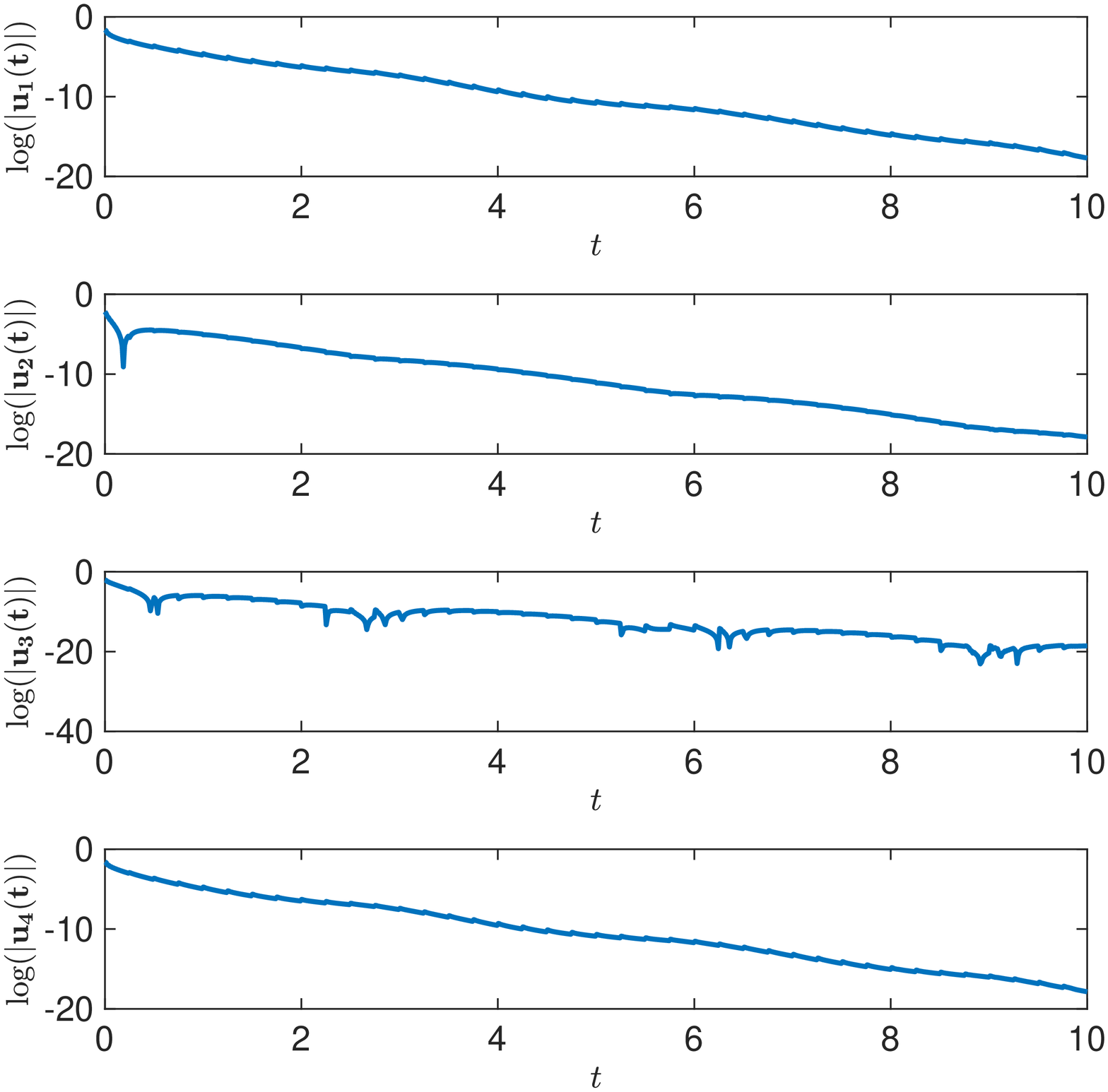}
    }
       \subfigure[$T =0.5 $]
    {
    \label{Fig6}
        \includegraphics[height=5cm,width=6cm]{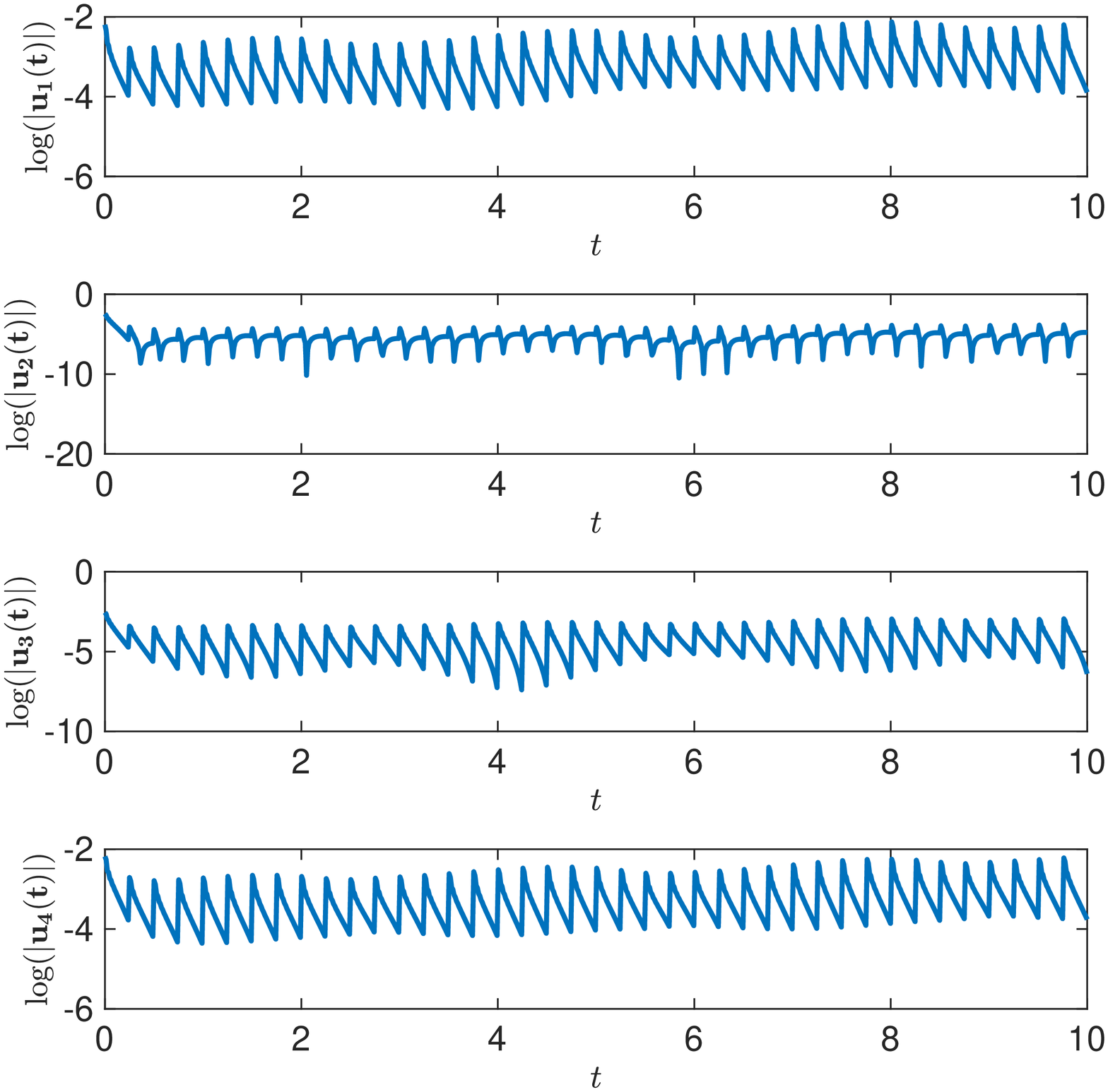}
    }
     \caption{Evolution of $\log(|(u_{rh})_i(t)|)$ corresponding to Example \ref{exp1} for $i =1,\dots,4$ and the choices $T =1.5 , 0.5 $}
\end{figure}
\end{example}

\begin{example}
\label{exp2}
In this example, we demonstrate the qualitative differences  between  the $\ell_1$- and  $\ell_2$-control costs. Here we set $\beta =5000$ and  considered 13 actuators, whose supports are specified in Figure \ref{Fig2}. Here the control domain consists of 13 percent of the domain. We ran algorithm \ref{RHA} for both of the control costs, different choices of  $T$ with fixed $\delta =0.25$. The corresponding  numerical results are summarized in Tables \ref{table2} and \ref{table3}.
\begin{table}[htbp]
\begin{center}
\scalebox{0.7}{
  \begin{tabular}{ | c | c | c | c | c | c | c |}
    \hline
    Prediction Horizon & $J_{T_{\infty}}$ & $\|y_{rh}\|_{L^2(0,T_{\infty};V)}$  & $\|y_{rh}(T_{\infty})\|_{V}$ &$\|y_{rh}(T_{\infty})\|_{H}$ & iter\\
    \hline
    $T = 1.5$ & $4.94\times 10^{1}$ &$ 7.76$ &$ 9.09 \times 10^{-6}$ & $ 1.24 \times 10^{-6}$& $1641$ \\
    \hline
    $T = 1$ &$5.34\times 10^{1}$ &$8.45$&$ 8.36\times10^{-4}$  &$ 1.19\times10^{-4}$ &$1262$ \\
    \hline
    $T = 0.75$ & $7.79\times 10^{1}$ &$1.08\times 10^{1}$& $9.76 \times 10^{-2}$ & $1.55 \times 10^{-2}$&$1008$ \\
    \hline
    $T = 0.5$ & $4.55\times 10^{3}$ &$8.88\times 10^{1}$& $4.86\times 10^{1} $ & $9.42$&$743$ \\
    \hline
    $T = 0.25$ &$6. 99\times 10^{8}$& $3.66 \times 10^{4}$&  $4.79 \times 10^{4}$ &$1.06 \times 10^{4}$&$521$ \\
    \hline
 \end{tabular}}
 \end{center}
 \caption{Numerical results corresponding to Example \ref{exp2} with  $\ell_2$-norm}
 \label{table2}
\end{table}
Moreover Figures  \ref{Fig7} and  \ref{Fig8} depict the evolution of $\log(\|y_{rh}(t)\|_{H})$ for different choices of $T$ and control costs. In both of the cases $\ell_1$-  and  $\ell_2$-norms, we can observe that RHC is exponentially stabilizing for $T$ large enough.  Clearly, the considerations concerning the value of $T$ from the previous example are also valid here.  Moreover, to obtain a rate of stabilization for the $\ell_1$-norm comparable to the $\ell_2$-norm, a larger value of $T$ is required.
\begin{table}[htbp]
\begin{center}
\scalebox{0.7}{
  \begin{tabular}{ | c | c | c | c | c | c | c |}
    \hline
    Prediction Horizon & $J_{T_{\infty}}$ & $\|y_{rh}\|_{L^2(0,T_{\infty};V)}$ & $\|y_{rh}(T_{\infty})\|_{V}$ &$\|y_{rh}(T_{\infty})\|_{H}$ & iter\\
    \hline
    $T = 2$ & $2.03\times 10^{2}$ &$ 9.65$ &$ 1.43 \times 10^{-4}$ & $ 2.00 \times 10^{-5}$& $1780$ \\
    \hline
    $T = 1.25$ &$3.68\times 10^{2}$ &$1.61\times10^{1}$&$ 8.87\times10^{-1}$  &$ 1.39\times10^{-1}$ &$777$ \\
    \hline
    $T = 1$ & $3.56\times 10^{4}$ &$1.79\times 10^{2}$& $1.23 \times 10^{2}$ & $2.27 \times 10^{1}$&$460$ \\
    \hline
    $T = 0.75$ & $8.07\times 10^{7}$ &$1.01\times 10^{4}$& $1.17\times 10^{4} $ & $2.47\times 10^{3}$&$419$ \\
    \hline
    $T = 0.5$ &$2. 86\times 10^{10}$& $2.24 \times 10^{5}$&  $3.26 \times 10^{5}$ &$7.25 \times 10^{4}$&$361$ \\
    \hline
 \end{tabular}}
 \end{center}
 \caption{Numerical results corresponding to Example \ref{exp2} with  $\ell_1$-norm}
 \label{table3}
\end{table}
Figures \ref{Fig9} and \ref{Fig10} depict the evolution of the absolute value of the RH controllers for the $\ell_1$-norm with $T =2$, and   for the  $\ell_2$-norm with $T =1.5$, respectively. As can be seen, while for the $\ell_2$-norm  all of the actuators were active (the corresponding controller were nonzero) consistently  over the whole interval $[0,T_{\infty}]$, for the case of  $\ell_1$-norm, not all of the actuator are active over  $[0,T_{\infty}]$. In particular,  the RH controllers  1, 7, and 8 were forced to be zero all  time,  the actuators 6 and 13  were active just for a very short interval at the beginning of the simulation, and the actuators 5 and 12 were also off for a short period of time.  We should mention that a similar behaviour was  also observed for different values of $T$.
\begin{figure}[htbp]
    \centering
  \subfigure[$\ell_1$-norm]
    {
    \label{Fig7}
        \includegraphics[height=4cm,width=5cm]{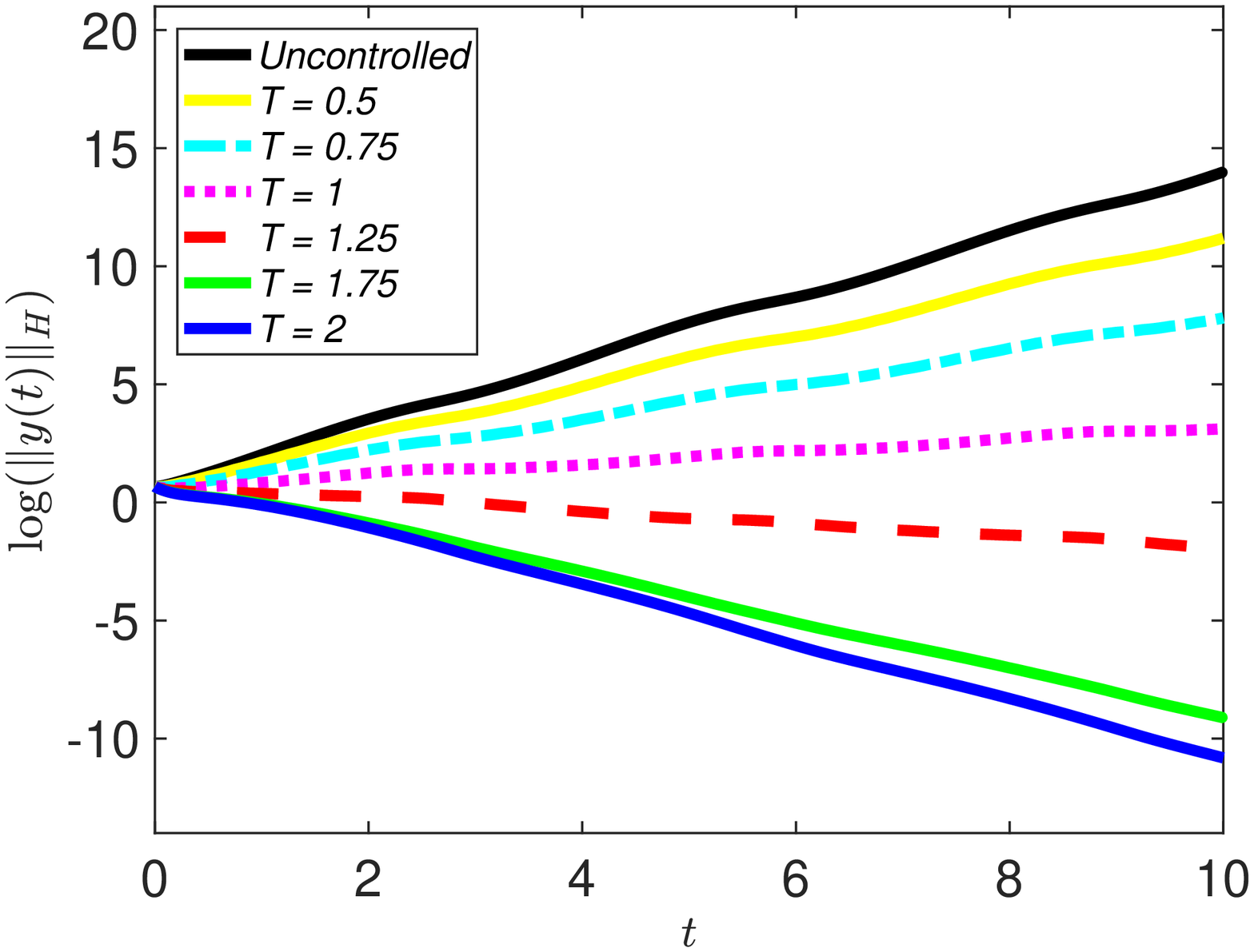}
    }
 \subfigure[$\ell_2$-norm ]
    {
    \label{Fig8}
        \includegraphics[height=4cm,width=5cm]{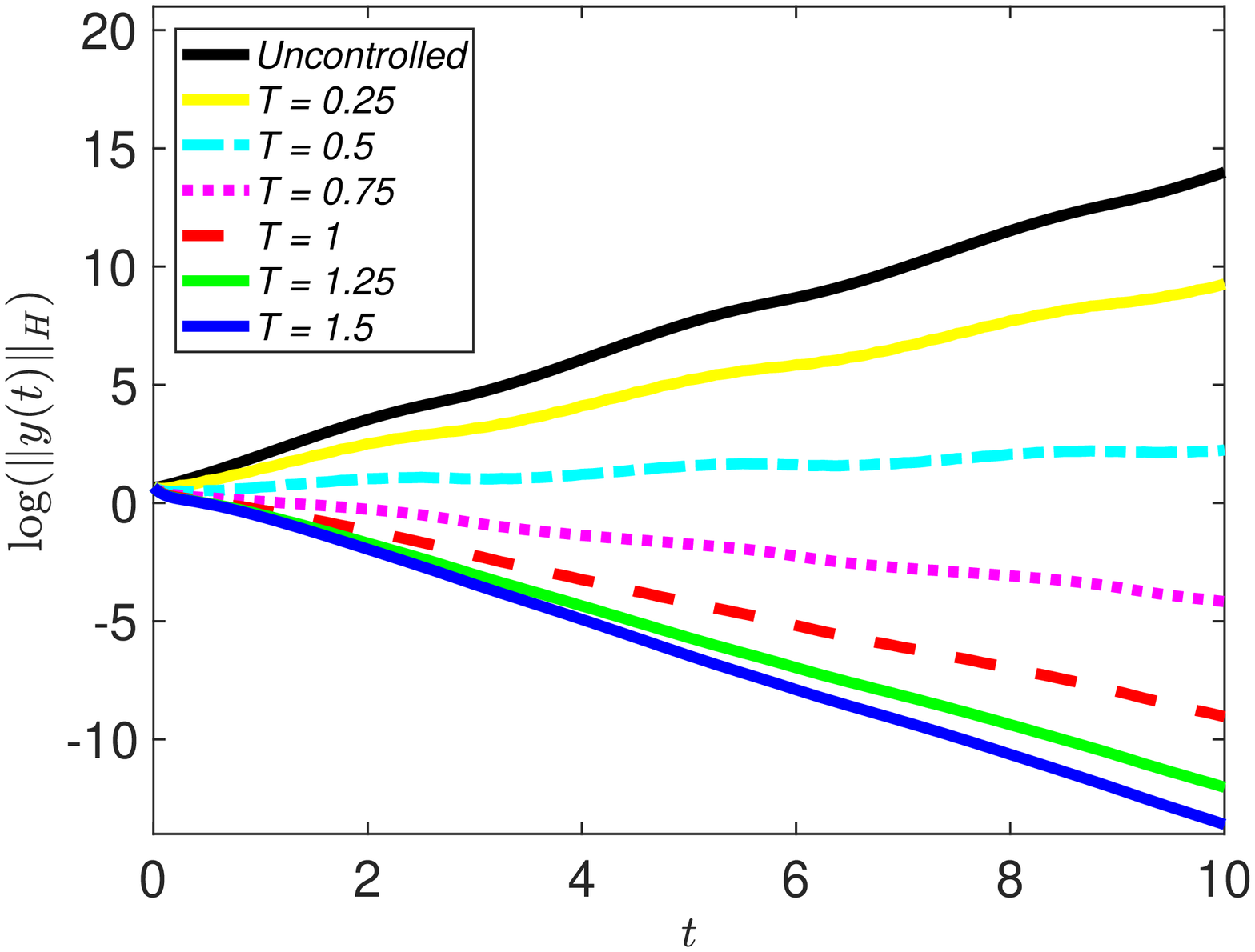}
    }
     \caption{Evolution of $\log(\|y_{rh}(t)\|_{H})$ corresponding to Example \ref{exp1} for different choices of $T$ and control costs ($\ell_2$-norm versus  $\ell_1$-norm) }
\end{figure}

\begin{figure}[htbp]
    \centering
 \subfigure[$\ell_1$-norm with $T=2$ ]
    {
     \label{Fig9}
        \includegraphics[height=7cm,width=7.5cm]{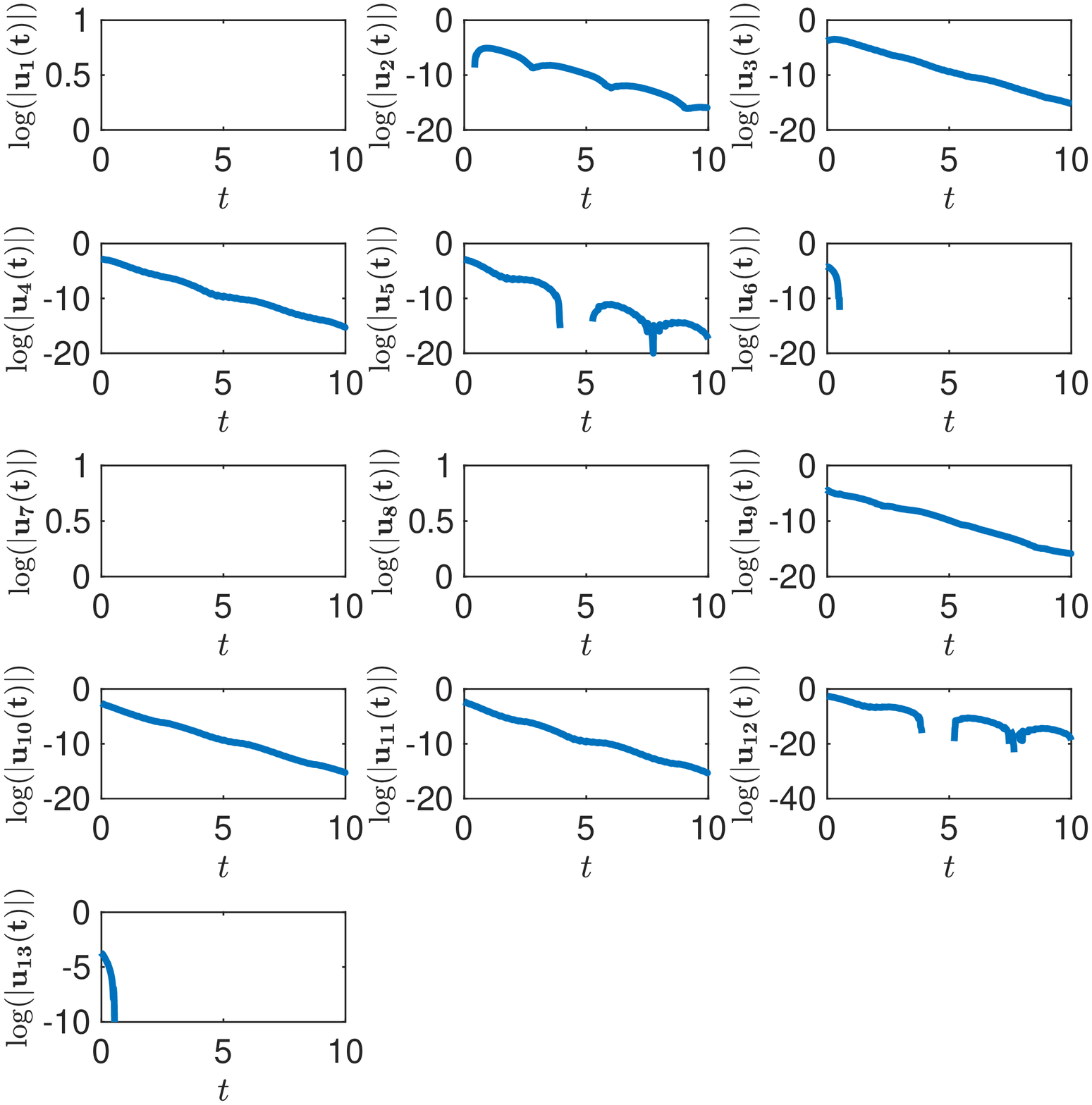}
    }
    \subfigure[$\ell_2$-norm with $T=1.5$ ]
    {
    \label{Fig10}
        \includegraphics[height=7cm,width=7.5cm]{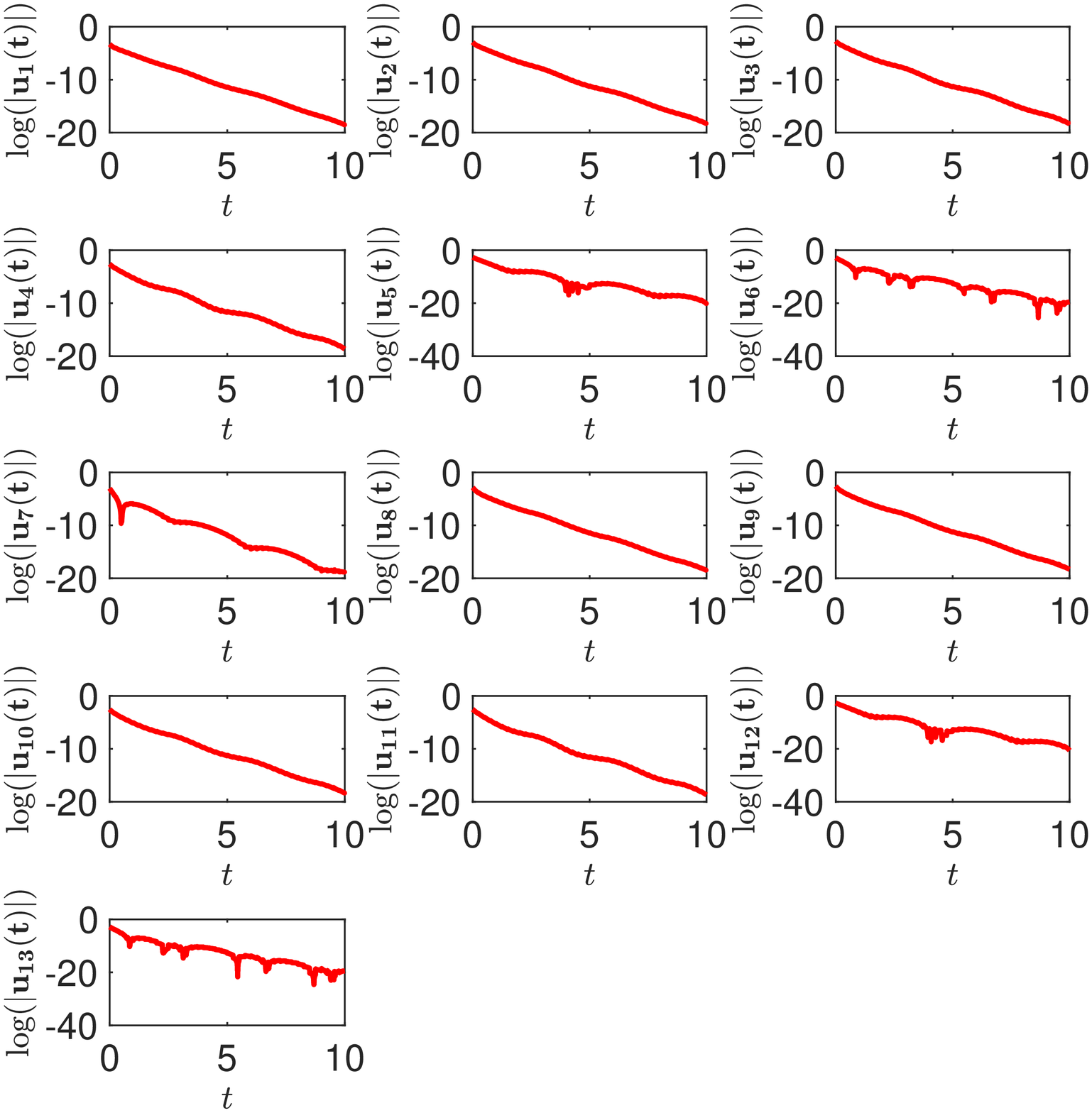}
    }
     \caption{Evolution of $\log(|(u_{rh})_i(t)|)$ corresponding to Example \ref{exp2} for $i =1,\dots,13$ and different control costs}
\end{figure}
\end{example}

Summarizing, for both numerical examples 
for a sufficiently large  prediction horizons $T\geq T^*>\delta$ the underlying system was successfully stabilized. Increasing $T$ leads to more efficient stabilization. 
On the other hand, the closer the prediction horizon $T$ is chosen to the sampling time $\delta$, the fewer overall iterations and computational effort is  required. Moreover, as desired, incorporating the squared  $\ell_1$-norm enhances stabilization in such a manner that at any time instance  fewer actuators are active.

\renewcommand\thesection{\Alph{section}}
\renewcommand\thesubsection{\thesection.\arabic{subsection}}
\setcounter{subsection}{0}
\setcounter{section}{0}
\section{Appendix}
\subsection{Proof of Theorem \ref{subopth}}
\label{apend1}
\begin{proof}
By Proposition \ref{pro1}, there exist a $T^*>0$ and $\alpha\in (0,1)$ such that for every $T\geq T^*$, $y_0 \in H$, and  $k \in \mathbb{N}$ with $k \geq 1$,  we have
 \begin{equation}
\label{e9}
V_T(t_k,y_{rh}(t_k)) - V_T(t_{k-1},y_{rh}(t_{k-1}))\leq-\alpha \int_{t_{k-1}}^{t_k} \ell(t, y_{rh}(t), \mathbf{u}_{rh}(t))dt,
\end{equation}
where $t_k = k\delta$ for $k =0,1,2,\dots$.   For any $k' \geq 1$, by summing inequality \eqref{e9} over  $k =1,2,\dots,k'$, we obtain
\begin{equation*}
V_T(t_{k},y_{rh}(t_{k'})) \leq V_T(0,y_0) - \alpha \int^{t_{k'}}_{0} \ell(t,y_{rh}(t),\mathbf{u}_{rh}(t))dt.
\end{equation*}
Taking the limit $k' \to \infty$ we can conclude the suboptimality inequality \eqref{ed27}.

Now we turn to inequality \eqref{ed28}. Using \eqref{e9} and the fact that $\delta < T$, we can write
\begin{equation}
\label{e10}
V_T(t_k,y_{rh}(t_k)) - V_T(t_{k-1},y_{rh}(t_{k-1})) \leq-\alpha V_{\delta}(t_{k-1},y_{rh}(t_{k-1})).
\end{equation}
 Moreover, due to \ref{P2} and \ref{P3}, for every $(t_0, y_0) \in \mathbb{R}_+ \times H$ we obtain
 \begin{equation}
\label{ed31}
V_{\delta}(t_0, y_0) \geq \gamma_1(\delta) \|y_0\|^2_{H}   \geq \frac{\gamma_1 (\delta)}{\gamma_2 (T)}V_{T}(t_0,y_0).
\end{equation}
Using \eqref{e10} and \eqref{ed31} we can write
\begin{equation}
\label{ed313}
V_T(t_k,y_{rh}(t_k)) \leq \left( 1- \frac{\alpha \gamma_1(\delta)}{\gamma_2(T)} \right) V_T(t_{k-1}, y_{rh}(t_{k-1}))  \text{ for every } k \geq 1.
\end{equation}
Since $0<\gamma_1(\delta)\leq \gamma_2(\delta)\leq \gamma_2(T)$ and $\alpha \in (0,1)$, we have $\eta := \left( 1- \frac{\alpha \gamma_1(\delta)}{\gamma_2(T)} \right) \in (0,1)$. Furthermore, by defining $\zeta := \frac{| \ln \eta |}{\delta}$, using Property \ref{P2} for $V_{T}(0,y_{0})$,  and Property \ref{P3} for $V_{T}(t_k, y_{rh}(t_k))$,  we can infer that
\begin{equation*}
\gamma_1(T) \|y_{rh}(t_k)\|^2_{H}\leq V_{T}(t_k,y_{rh}(t_k)) \leq  e^{-\zeta t_k}V_{T}(0,y_{0}) \leq e^{-\zeta t_k}\gamma_2(T)\|y_0\|^2_{H}
  \end{equation*}
for every $k \geq 1$. Hence, by setting $c'_H:= \frac{\gamma_2(T)}{\gamma_1(T)}$ we can write
\begin{equation}
\label{ed32}
\|y_{rh}(t_k)\|^2_{H} \leq  c'_He^{-\zeta k\delta}\|y_0\|^2_{H}   \text{ for every } k \geq 1.
\end{equation}
Moreover, for every $t >0$ there exists a $k \in \mathbb{N}$ such that $t \in [t_k, t_{k+1}]$. Using \eqref{Est1}-\eqref{e8}, and  \eqref{ed32},   we have for $t \in [t_k, t_{k+1}]$,
\begin{equation*}
\begin{split}
\|y_{rh}(t)\|^2_{H} &\stackrel{\text{\eqref{Est1}}}{\leq} c_{\delta}\left(\|y_{rh}(t_k)\|^2_{H}+\int^{t_{k+1}}_{t_k}\|\mathbf{u}_{rh}(t)\|^2_{U}dt\right)\\
& \stackrel{\text{\eqref{estiob}}}{\leq} c_{\delta}\left(\|y_{rh}(t_k)\|^2_{H}+\frac{1}{\alpha_{\ell}}V_T(t_k,y_{rh}(t_k))\right)\stackrel{\text{\eqref{e7}}}{\leq} c_{\delta}(1+\frac{\gamma_2(T)}{\alpha_{\ell}})\|y_{rh}(t_k)\|^2_{H}\\
& \stackrel{\text{\eqref{ed32}}}{\leq} c_{\delta}c'_H(1+\frac{\gamma_2(T)}{\alpha_{\ell}}) e^{-\zeta t_k}\|y_0\|^2_{H}\leq  c_{\delta}c'_H(1+\frac{\gamma_2(T)}{\alpha_{\ell}})\left( 1- \frac{\alpha \gamma_1(\delta)}{\gamma_2(T)}\right)^{-1}e^{-\zeta t_{k+1}}\|y_0\|^2_{H}\\
&\leq  c_{\delta}c'_H(1+\frac{\gamma_2(T)}{\alpha_{\ell}})\left( 1- \frac{\alpha \gamma_1(\delta)}{\gamma_2(T)} \right)^{-1}e^{-\zeta t}\|y_0\|^2_{H},
\end{split}
\end{equation*}
and therefore by setting
\begin{equation}
\label{e89}
c_H :=c_{\delta}c'_H(1+\frac{\gamma_2(T)}{\alpha_{\ell}})\left( 1- \frac{\alpha \gamma_1(\delta)}{\gamma_2(T)} \right)^{-1},
\end{equation}
we are finished with the verification of  \eqref{ed28} and the proof is complete.
\end{proof}

\subsection{Proof of Proposition \ref{Theo2}}
\label{apend2}
\begin{proof}
The existence result is standard and it will be obtained based on the Galerkin approximation,  using the eigenfunctions of the Laplacian as the basis functions and  a-priori estimates.  Therefore here, we omit the complete proof and restrict ourselves only to the derivation of the estimates \eqref{e29} and \eqref{e14}.  The rest of procedure is carried out in a similar manner an in e.g.,  \cite{MR1318914}[Chapter 1,
Section 3] and  \cite{MR769654}[Chapter 3, Sections 1.3, 1.4, and 3.2].  Before investing the estimates, we show that for every $a \in L^r(\Omega)$ with $r \geq n$ and   $\phi , \psi \in V$ we have
\begin{equation}
\label{e18}
\begin{cases}
\langle a \phi ,\phi \rangle_{V',V} \leq  c\|a\|_{L^r(\Omega)}\|\phi \|_H\| \phi\|_V   & \quad  \text{ for }  n \geq 1,        \\
\langle a \phi ,\psi\rangle_{V',V}\leq  c \|a\|_{L^r(\Omega)} (\|\phi \|_H\| \phi\|_V\|\psi\|_H\| \psi\|_V)^{\frac{1}{2}}& \quad\text{ for }  n \in\{1,2\}, \\
\langle a \phi ,\psi\rangle_{V',V}\leq c \|a\|_{L^r(\Omega)} \|\phi \|_H\| \psi\|_V  & \quad   \text{ for }  n \geq  3, \\
\end{cases}
\end{equation}
where $c>0$ is a generic constant and it depends only on $\Omega$.  For the case of  $n =1$, using the Agmon inequality,  we obtain
\begin{equation}
\label{e16}
\langle a \phi ,\psi \rangle_{V',V} \leq  c\|a\|_{L^r(\Omega)} \|\phi \|_{L^{\infty}(\Omega)}\| \psi\|_{L^{\infty}(\Omega)} \leq c \|a\|_{L^r(\Omega)} (\|\phi \|_H\| \phi\|_V\|\psi\|_H\| \psi\|_V)^{\frac{1}{2}}.
\end{equation}
Therefore for this case, the first and second inequalities in \eqref{e18} follow from \eqref{e16}. Moreover, since $V \hookrightarrow  L^4(\Omega)$ for the case $n=2$, by using an interpolation inequality (see e.g., \cite{MR0350177}),  we infer that
\begin{equation*}
\langle a \phi ,\psi \rangle_{V',V} \leq   c\|a\|_{L^r(\Omega)} \|\phi \|_{L^{4}(\Omega)}\| \psi\|_{L^{4}(\Omega)} \leq c \|a\|_{L^r(\Omega)} (\|\phi \|_H\| \phi\|_V\|\psi\|_H\| \psi\|_V)^{\frac{1}{2}},
\end{equation*}
and, as consequence, the first and second inequalities hold for  $n = 2$.  Finally \eqref{e18} for  $n \geq 3$  follows from the fact that  $V \hookrightarrow  L^{\frac{2n}{n-2}}(\Omega)$  and the following inequality
\begin{equation}
\label{e37}
\langle a \phi ,\psi \rangle_{V',V} \leq   c\|a\|_{L^n(\Omega)}  \| \phi\|_{H}\|\psi \|_{L^{\frac{2n}{n-2}}(\Omega)} \leq c \|a\|_{L^r(\Omega)}  \| \phi\|_{H}\|\psi \|_{V}.
\end{equation}
Now we turn to estimate \eqref{e29}. We assume that the solution $y$ to \eqref{e17} is regular enough. Then by multiplying the equation \eqref{e17} by $y(t)$, or equivalently by replacing  $\phi$ by $y(t)$ in the weak formulation \eqref{e19}, and using \eqref{e18},  we obtain for almost every $t \in (t_0,t_0+T)$ that
\begin{equation}
\label{e30}
\begin{split}
\frac{d}{2dt}\|y(t)\|^2_H& + \nu \|y(t)\|^2_V \leq  | \langle a(t)y(t),y(t) \rangle_{V',V}| + |(b(t)y(t), \nabla y(t))_H| + | \langle f(t),y(t) \rangle_{V',V}| \\
&\leq   cN(a,b) \|y(t) \|_H\| y(t)\|_V  +  \|f(t)\|_{V'}\|y(t)\|_{V}.
\end{split}
\end{equation}
Then from \eqref{e30} and using Gronwall's and Young's inequalities, we can infer that
\begin{equation}
\label{e30a}
\|y\|^2_{L^{\infty}(t_0,t_0+T;H)} + \nu\| y \|^2_{L^2(t_0,t_0+T;V)} \leq \exp(c^2N^2(a,b)T)\left( \|y_0\|^2_H + \| f\|^2_{L^2(t_0,t_0+T;V')} \right).
\end{equation}
where here the constant  $c$ depends also on $\nu$. Moreover, we can write
\begin{equation}
\label{e31}
\begin{split}
\|\partial_t y\|_{L^2(t_0,t_0+T;V')} &= \sup_{\|\phi \|_{L^2(t_0,t_0+T;V)}=1}   \int^{t_0+T}_{t_0} \langle  \partial_t y(t), \phi(t) \rangle_{V,V'}dt \\  &= \sup_{\|\phi \|_{L^2(t_0,t_0+T;V)}=1} \int^{t_0+T}_{t_0} \langle \nu\Delta y(t) - a(t)y(t)- \nabla \cdot (b(t)y(t)) + f(t), \phi(t) \rangle_{V',V}dt \\
&\leq  c(\nu+N(a,b)) \| y\|_{L^2(t_0,t_0+T;V)} + \|f\|_{L^2(t_0,t_0+T;V')},
\end{split}
\end{equation}
and, as a consequence,   \eqref{e29} follows from \eqref{e30a}  and \eqref{e31}.

 Finally, we come to the verification of the observability estimate \eqref{e14}.  Multiplying \eqref{e17} by $\frac{T+t_0-t}{T}y(t)$ and integrating in time from $t_0$ to $t_0+T$, we obtain
\begin{equation}
\label{e33}
\begin{split}
&\int^{t_0+T}_{t_0}\frac{t_0+T-t}{T} \langle  \partial_t y(t),y(t)\rangle_{V',V}= \\
&\int^{t_0+T}_{t_0}\frac{t_0+T-t}{T} \left( -\nu\|y(t)\|^2_{V}- \langle a(t)y(t),y(t) \rangle_{V',V} + (b(t)y(t), \nabla y(t))_H  +\langle f(t),y(t)\rangle_{V',V}  \right) \,dt.
\end{split}
\end{equation}
By integration by part, we can infer that
\begin{equation}
\label{e34}
\begin{split}
\int^{t_0+T}_{t_0}\frac{t_0+T-t}{T} \langle  \partial_t y(t),y(t)\rangle_{V',V} \,dt&= \int^{t_0+T}_{t_0}  \frac{t_0+T-t}{2T} \left( \frac{d}{dt}\|y(t)\|^2_H \right) \,dt \\& = \frac{1}{2T}\int^{t_0+T}_{t_0}\|y(t)\|^2_H\,dt - \frac{1}{2}\| y(t_0)\|^2_H.
\end{split}
\end{equation}
Now, using  \eqref{e18}, \eqref{e33}, \eqref{e34}, Young's inequality, and the fact that $\frac{T+t_0-t}{T} \leq 1$ for every  $t \in [t_0,t_0+T]$, we obtain
\begin{equation*}
\begin{split}
&\|y_0 \|^2_H =\frac{1}{T} \int^{t_0+T}_{t_0}\| y(t) \|^2_H\,dt\\
&+2\int^{t_0+T}_{t_0}\frac{t_0+T-t}{T}\left( \nu \|y(t)\|^2_{V}+ \langle a(t)y(t),y(t) \rangle_{V',V} - (b(t)y(t), \nabla y(t))_H -\langle f(t),y(t)\rangle_{V',V}  \right) \,dt \\
&\leq c_p\left(\frac{1}{T}+\frac{2\nu+1}{c_p}+ N(a,b)\right)\int^{t_0+T}_{t_0}\| y(t)\|^2_V\,dt +\int^{t_0+T}_{t_0}\| f(t)\|^2_{V'}\,dt,
\end{split}
\end{equation*}
where $c_p>0$ stands for the constant in  the Poincar\'e  inequality. This implies  \eqref{e14}.
\end{proof}

\subsection*{Acknowledgements}
The authors appreciate  Sergio S. Rodrigues for his  helpful comments and insights on the topic of finite-dimensional stabilizability  of the  time-varying parabolic equations. 

The work of K. Kunisch was partly supported by the ERC advanced grant 668998
(OCLOC) under the EU's H2020 research program.

\bibliographystyle{siam}
\bibliography{RHCforWAVE.bib}
\end{document}